\documentclass[11pt]{article}
\usepackage{geometry}
\usepackage{graphicx}
\usepackage{amsfonts,amssymb,amsbsy,amsmath,amssymb,float,subfloat,epstopdf,subfig,amsthm,enumerate}
\usepackage{hyperref,tikz}
\usepackage{mathrsfs}
\newcommand{\intL}{\int\limits }
\newtheorem{thm}{Theorem}

\newtheorem{prop}[thm]{Proposition}
\newtheorem{algorithms}[thm]{Algorithm}
\newtheorem{cor}[thm]{Corollary}
\newtheorem{lem}[thm]{Lemma}
\usepackage{xcolor}
\usepackage[dvipsnames]{xcolor}

\title{Initial condition recovery in nonlinear damped viscous photoacoustic tomography using a convolutional neural network-guided gradient-free optimization framework}
\author{Madhu Gupta\thanks{Department of Mathematics, Indian Institute of Technology, Gandhinagar, Gujarat, India. \url{madhu.gupta@iitgn.ac.in}}
\and Anwesa Dey\thanks{Department of Mathematics, University of Utah, Salt Lake City, Utah, UT, USA. \url{anwesary@math.utah.edu}}
\and Prapti Tala\thanks{Department of Mathematics, Indian Institute of Technology, Gandhinagar, Gujarat, India. \url{prapti.tala@iitgn.ac.in}}
\and Souvik Roy\thanks{Department of Mathematics, The University of Texas at Arlington, Arlington, TX, USA. \url{souvik.roy@uta.edu}}}
\date{}

\begin{document}
\maketitle

\begin{abstract}
Photoacoustic tomography (PAT) is a hybrid imaging modality that combines high optical contrast with high ultrasonic resolution for biomedical imaging applications. In this work, we investigate the inverse problem of recovering the initial pressure distribution from boundary measurements in the presence of nonlinear acoustic propagation and viscous attenuation effects. To model these phenomena more accurately, we consider a nonlinear damped viscoelastic wave equation incorporating spatially varying sound speed, temporal attenuation, and nonlinear propagation mechanisms. 
We first establish the well-posedness of the corresponding forward problem using a Galerkin approximation combined with energy estimates and a fixed-point argument. For the inverse problem, we derive existence, uniqueness, and local uniqueness results under suitable assumptions through a harmonic extension reduction, spectral Laplace transform techniques, and observability estimates. 
To numerically reconstruct the initial pressure field, we develop a hybrid reconstruction framework that combines a convolutional neural network (CNN) with a gradient-free optimization strategy based on the sequential quadratic Hamiltonian (SQH) method derived from Pontryagin’s maximum principle. The CNN is used to generate an informative initial guess, while the SQH framework enforces the governing PDE dynamics during the reconstruction process. Numerical experiments demonstrate that the proposed hybrid strategy significantly improves reconstruction quality, contrast, and robustness compared to standalone time-reversal and CNN-based approaches.
\end{abstract}

\textbf{Keywords:} Photoacoustic tomography, Hamiltonian, Machine learning, Optical tomography, Pontryagin's maximum principle, Inverse problems
\vspace{2mm} \\
\textbf{Mathematics Subject Classification:}  35R30, 35L05, 49J20, 49K20.

\section{Introduction}\label{sec:intro}

Optical tomography (OT) is a noninvasive imaging technique that utilizes near--infrared (NIR) light to probe biological tissues and recover their internal optical characteristics. In particular, it aims to estimate spatial distributions of absorption and scattering coefficients, which are key indicators in medical diagnostics. Since cancerous tissues often exhibit reduced scattering compared to healthy ones, precise reconstruction of these optical parameters is critical for early detection and accurate characterization of malignancies. However, the associated inverse problem is highly ill-posed, making reconstructions sensitive to noise and typically resulting in low spatial resolution.

To mitigate these challenges, hybrid and multi-physics imaging strategies have been introduced, combining optical measurements with additional physical modalities. Examples include fluorescence molecular tomography, confocal diffuse tomography, various forms of photoacoustic tomography, and ultrasound-modulated optical tomography. These approaches enhance stability and resolution by leveraging complementary physical information \cite{gupta2020sparse, gupta2021sparsity, adesokan2018nonlinear, roy2018optimization}.  Among them, photoacoustic tomography (PAT)~\cite{Gao12,scherzer15,wang09} has gained particular prominence due to its ability to achieve high-resolution imaging at clinically relevant depths, making it especially valuable in biomedical applications.

In PAT, short pulses of NIR light are delivered to the boundary of a biological specimen. As light propagates within the tissue, part of its energy is absorbed, leading to localized temperature increases and subsequent thermoelastic expansion, while the remaining photons scatter throughout the medium. When the illumination ceases, the tissue relaxes, generating transient pressure waves through the photoacoustic effect~\cite{bell80}. These acoustic signals propagate through the tissue and are captured by detectors placed at or near the boundary. Notably, acoustic wave propagation occurs on time scales significantly slower than optical transport, enabling effective separation of the two processes. The central inverse problem in PAT is to reconstruct the initial pressure distribution, proportional to the absorbed optical energy, from these boundary measurements (see, e.g., \cite{ammaribjk10,anastasiozmr07,kuchment14book,moonzangerlip19}).

A large body of mathematical work models acoustic propagation in PAT using the standard wave equation with spatially varying sound speed; see, for instance, \cite{moonjop16,hwangjmp24,knoxm20,liuu15,moonsiims23} and the references therein. While such models account for heterogeneity and finite propagation speed, they often neglect attenuation effects inherent to biological media. In practice, acoustic waves experience amplitude decay due to absorption, scattering, and viscous dissipation as they travel through tissue. Furthermore, there is also the prevalence of nonlinear acoustic mechanisms that involve the alteration of sound waves as they travel through a medium, where high intensities or specific material characteristics cause deviations from typical linear behavior \cite{Enflo2002}. Unlike linear waves, nonlinear waves experience asymmetry in their compressions and rarefactions, leading to phenomena such as waveform distortion, generation of harmonics, and even the formation of shock waves. These nonlinear effects are especially pronounced in high-intensity applications, where increased wave amplitudes result in rapid wavefront steepening and the potential for shock wave development. Ignoring these mechanisms can lead to reconstruction artifacts and degradation in image quality. To incorporate attenuation, several modeling frameworks have been proposed in the PAT literature~\cite{Ammari2012,elbauss17,moonip24,kowars12,nachmansw90}, including frequency-dependent power-law models, thermo-viscous formulations, and time-convolution-based approaches. These models introduce additional complexity into the inverse problem and can exacerbate ill-posedness. Furthermore, many of these formulations rely on the simplifying assumption of constant sound speed, which may not adequately capture realistic tissue heterogeneity.

In this setting, nonlinear damped viscous wave (NDVW) equations play a fundamental role in modeling wave propagation and in addressing inverse problems aimed at recovering structural information from scattered wave data. These equations arise in a wide range of applications, including civil and environmental engineering, geophysics, ocean acoustics, and biomedical ultrasonics (see \cite{duck2002nonlinear}). By incorporating viscous damping, frictional dissipation, and nonlinear effects, NDVW models provide a more realistic description of wave behavior in viscoelastic media, thereby enabling more accurate forward simulations and parameter estimation. 
In this work, we focus on a specific NDVW model, given by the Westervelt's wave equation, modeling acoustic wave propagation in PAT.
Specifically, we consider the pressure field \(u(x,t)\) as the solution to the Cauchy problem
\begin{equation}
  u_{tt}(x,t) - c^2(x) \Delta u(x,t) - d(x,t) \mathcal{M}* \Delta u_t = \eta(x,t)\big( u^2 \big)_{tt},~ (x,t) \in \mathbb{R}^n \times (0,T),
  \label{eq:main-pde}
\end{equation}
where $n\leq 3$ denotes the spatial dimension, with the initial data
\begin{equation}\label{eq:waveic}
u(x,0)=u_0(x), \qquad \partial_t u(x,0)=0.
\end{equation}
Here, \(c(\cdot)\) denotes a spatially varying sound speed, $d(x,t) > 0$ is a smooth strictly positive damping coefficient that models cumulative attenuation phenomena such as absorption and viscous losses, and \(\eta(x,t) > 0\) is a smooth, strictly positive nonlinearity coefficient. The non-local attenuation is captured by the term $d(x,t) \mathcal{M}* \Delta u_t$, where $*$ is the Laplace convolution operator:
\[
(\mathcal{M}*f)(t) = \int_0^t \mathcal{M}(t-s)f(s)ds
\]
and $\mathcal{M}$ is the memory kernel.
We assume that  the region of interest \(\Omega\subset\mathbb{R}^n\) is a bounded domain with smooth boundary \(\partial\Omega\) and that the sound speed, the damping and nonlinearity coefficients, and the initial data satisfy the assumptions~(\ref{eq:assumptions}) stated in Section~\ref{sec:analysis}.
The measured data are given by the boundary trace
\[
g(y,t) = u(y,t)\big|_{(y,t)\in \partial\Omega\times[0,\infty)}.
\]
In this work, our focus is to reconstruct the initial pressure distribution \(u_0\), and hence the absorbed optical energy density, inside \(\Omega\) from the knowledge of \(g\).

We further note that when the damping coefficient in~\eqref{eq:main-pde} depends solely on the spatial variable, i.e., $d = d(x)$ and when $\eta=0$, the associated inverse problems have been investigated in several works~\cite{dohknn24,haltmeiern19,homan13,palacios16}. In the setting of multiwave imaging, Homan~\cite{homan13} and Palacios~\cite{palacios16} studied damped wave models with spatially varying attenuation and established reconstruction procedures based on Neumann series expansions under appropriate conditions. For media with variable sound speed $c(\cdot)$, the inverse problem can be effectively addressed using time-reversal techniques or equivalent Neumann series formulations~\cite{hristovakn08,nguyenhkd22,oksanenu14,stefanovu09}. This framework was subsequently generalized in~\cite{haltmeiern19} to incorporate spatially dependent damping. In contrast, when the damping exhibits explicit time dependence along with the nonlinearity, the fundamental time-reversal symmetry of the governing equations breaks down, rendering standard time-reversal-based reconstruction methods inapplicable in a direct manner.

Our first contribution is the mathematical analysis of the nonlinear damped viscoelastic wave (NDVW) equation arising in photoacoustic tomography with attenuation and nonlinear propagation effects. We establish the well-posedness of the corresponding forward problem by proving the existence and uniqueness of solutions in appropriate energy spaces. The analysis is carried out through a Galerkin approximation procedure combined with uniform energy estimates, compactness arguments, and a Banach fixed-point framework for the nonlinear problem. Our second contribution is the development of the theoretical framework for the inverse problem of recovering the initial pressure distribution $u_0(x)$ from boundary observations. In this direction, we establish existence, uniqueness, stability, and local uniqueness results for the reconstruction under suitable assumptions on the coefficients and measurement operators. The analysis combines harmonic extension techniques, spectral Laplace transform arguments, and observability estimates for the associated linearized operator. These results provide a rigorous mathematical foundation for the nonlinear inverse problem and clarify the role of attenuation, memory effects, and nonlinear propagation in the reconstruction process.

While the theoretical analysis establishes the solvability and stability properties of the inverse problem, the practical reconstruction of $u_0$ remains computationally challenging due to the nonlinear, nonlocal, and ill-posed nature of the governing PDE model. This motivates the development of efficient computational reconstruction algorithms capable of incorporating both the underlying PDE structure and data-driven feature extraction. For this purpose, we employ the sequential quadratic Hamiltonian (SQH) algorithm, originally introduced in~\cite{Breit18, Breit19, Breit19a} for nonsmooth PDE-constrained optimal control problems, and adapt it to the present inverse problem. The SQH framework is based on the Pontryagin maximum principle (PMP), a fundamental result in optimal control theory. Its appeal lies in its computational efficiency, robustness, and well-posedness as an iterative scheme. A key feature of SQH is its reliance on pointwise optimization steps, which simplifies implementation and enhances scalability. This approach has recently been applied successfully to tomographic inverse problems in optical imaging~\cite{dey2024,Roy2022,Roy2023,Roy2025}. 

Despite these advantages, the performance of SQH remains sensitive to the choice of initial guess. Standard strategies include initializing with a trivial guess (e.g., zero) or using a preliminary reconstruction obtained via an auxiliary method. It is well known that iterative schemes benefit significantly when the initial iterate captures essential structural features of the true solution. One natural candidate is the time-reversal method, which provides a physically consistent approximation of $u_0$. However, in attenuating media, time-reversal reconstructions suffer from substantial amplitude loss due to damping effects, leading to poor contrast.

Convolutional neural networks (CNNs) have demonstrated remarkable success in solving ill-posed, PDE-constrained inverse problems. Their data-driven nature enables them to learn complex, nonlinear mappings directly from training data, effectively capturing features that are difficult to model analytically. By combining convolutional layers with nonlinear activation functions, CNNs construct hierarchical representations capable of approximating intricate solution structures. Moreover, unlike classical variational approaches that require explicit regularization (e.g., $L^2$, $L^1$, or Tikhonov penalties), CNNs incorporate regularization implicitly through their architecture and training process. Nevertheless, a key limitation of CNN-based methods is their lack of explicit enforcement of physical laws, as well as their dependence on large training datasets. Although physics-informed neural networks (PINNs) aim to address this issue, they often face challenges in balancing data fidelity with strict adherence to governing equations. As a result, purely data-driven reconstructions may exhibit nonphysical artifacts.

To use the complementary strengths of both approaches, we propose a novel hybrid initialization strategy for the SQH algorithm for solving the NDVW inverse problem. Specifically, the initial guess is constructed as a combination of the time-reversal reconstruction and the output of a convolutional neural network (CNN). This hybrid strategy preserves the physically meaningful structural information captured by the time-reversal method while simultaneously benefiting from the enhanced contrast and feature extraction capabilities of the CNN.  In addition, the proposed CNN architecture is specifically designed for the nonlinear PAT reconstruction problem by directly learning the mapping from boundary wave measurements to the initial pressure distribution in the presence of attenuation and nonlinear propagation effects. Unlike conventional reconstruction approaches that rely solely on physics-based inversion or purely data-driven learning, the proposed architecture is trained to capture complex nonlinear relationships and hidden spatial features arising from the NDVW dynamics. The network further serves as a physics-guided data-driven regularizer by generating reconstructions that provide informative structural priors for the subsequent SQH optimization procedure.  As demonstrated in Section~\ref{sec:numericalresult}, this hybrid initialization significantly improves the quality of the final reconstruction, leading to enhanced contrast, sharper interfaces, improved structural resolution, and greater robustness compared to standalone time-reversal or CNN-based reconstructions.

The remainder of the paper is organized as follows. In Section \ref{sec:analysis}, we introduce the nonlinear damped viscoelastic wave (NDVW) equation and establish the well-posedness of the corresponding forward problem. The analysis is carried out using a Galerkin approximation scheme combined with energy estimates for a linearized problem and a Banach fixed-point argument. In Section \ref{sec: Inverse problem theory}, we develop the theoretical framework for the inverse problem of reconstructing the initial pressure distribution $f(x)$ from the boundary measurements $g(x,t)$. In particular, we establish existence, uniqueness, and local uniqueness results through a harmonic extension reduction, a spectral Laplace transform approach, and observability estimates for the associated linearized operator. Section \ref{sec:control} presents the numerical reconstruction framework. The reconstruction problem is formulated as an optimization problem consisting of a least-squares data fidelity term together with an $L^2$--$L^1$ regularization term to promote sparsity in the initial pressure distribution. We establish the existence of an optimal solution and derive the corresponding optimality system using Pontryagin's maximum principle. The resulting sequential quadratic Hamiltonian (SQH) algorithm is initialized using a convolutional neural network (CNN)-based reconstruction to obtain an efficient hybrid reconstruction framework. Numerical experiments demonstrating the effectiveness and robustness of the proposed approach are presented in Section \ref{sec:numericalresult}, where comparisons with the classical time-reversal method are also provided. We end with a section of conclusions.

\section{Analysis of the nonlinear damped wave model}
\label{sec:analysis}
In this section, we determine the well-posedness of the NDVW equation \eqref{eq:main-pde} inside a bounded Lipschitz observation domain $\Omega \subset \mathbb{R}^n$, with $1 \leq n \leq 3$. Consider
\begin{equation}\label{eq: Main PDE}
\begin{aligned}
 &u_{tt} - c^2(x) \Delta u - d(x,t) \mathcal{M}* \Delta u_t = \eta(x,t)\big( u^2 \big)_{tt},~ (x,t) \in \Omega \times (0,T), \\
& u(x,0)=u_0(x), \quad u_t(x,0)=0,~ x \in \Omega.
\end{aligned}
\end{equation}
We make the following assumptions:
\begin{equation}\label{eq:assumptions}
\begin{aligned}
&c \in W^{1,\infty} (\Omega), c^2(x) \geq c_0 > 0,\\
&d \in W^{1,\infty} (0,T; L^{\infty}(\Omega)), ~\text{with}~ d(x,t) \geq d_0 > 0,\\
&\eta \in W^{1,\infty}(0,T; W^{1,\infty}(\Omega)),\\
&\mathcal{M} \in L^1(0,T) \geq 0 \qquad \mathcal M'(t)\le0
\end{aligned}
\end{equation}
For $T>0$, define a Banach space
\[
X_T := \{u \in L^{\infty}(0,T;H^1(\Omega)) \cap  W^{1,\infty}(0,T; L^2(\Omega)) \cap L^2(0,T;H^2(\Omega))\}
\]

endowed with the norm
\[
\|u\|_{X_T}:= \text{sup}_{t\in[0,T]} \left( \|u(t)\|_{H^1(\Omega)} + \|u_t(t)\|_{L^2(\Omega)}\right) + \|u\|_{L^2(0,T;H^2(\Omega))} 
\]
The right-hand side of \eqref{eq: Main PDE} can be reformulated as $\eta(x,t) (u^2)_{tt} = 2\eta(u^2_t + u u_{tt}). $
Thus, \eqref{eq: Main PDE} can be written as 
\[
(1-2\eta u)u_{tt} - c^2(x)\Delta u - d(x,t) \mathcal{M} * \Delta u_t =  2 \eta u_t^2 
\]
with the same data and boundary conditions. 

We have the following result for the existence and uniqueness of solutions of \eqref{eq: Main PDE}.

\begin{thm}\label{th:NDVW}
 Under the Assumptions \eqref{eq:assumptions}, there exists a unique solution to the NDVW equation \eqref{eq: Main PDE} in $X_T$.  
\end{thm}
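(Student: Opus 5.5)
We prove Theorem~\ref{th:NDVW} by linearizing around a given function, solving the linear problem by Galerkin approximation, and closing with a Banach fixed-point argument in $X_T$. Throughout we work with the reformulated equation
\[
(1-2\eta u)u_{tt} - c^2(x)\Delta u - d(x,t)\,\mathcal{M}*\Delta u_t = 2\eta u_t^2 .
\]
We impose the boundary condition implicit in \eqref{eq: Main PDE} (homogeneous Dirichlet on $\partial\Omega$) and use the Dirichlet eigenfunctions $\{w_k\}$ of $-\Delta$ as the Galerkin basis.

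\textbf{Step 1 (linear problem).} Fix $v\in X_T$ and consider
\[
(1-2\eta v)u_{tt} - c^2\Delta u - d\,\mathcal M*\Delta u_t = 2\eta v_t^2 ,\qquad u(0)=u_0,\quad u_t(0)=0 .
\]
Since $n\le 3$, we have $H^2(\Omega)\hookrightarrow L^\infty(\Omega)$. Together with $v\in L^\infty(H^1)\cap L^2(H^2)$, this gives control of $\|v\|_{L^\infty}$, so the coefficient $a_v:=1-2\eta v$ stays in $[1/2,3/2]$ as long as $v$ lies in a suitable closed set $B\subset X_T$ defined below.
\begin{itemize}
\item We solve the ODE system for the Galerkin approximation $u_m=\sum_{k\le m}\alpha_k(t)w_k$. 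This is a linear Volterra integro-differential system, and it is uniquely solvable on $[0,T]$.
\item We test with $u_{m,t}$ and with $-\Delta u_{m,t}$ to obtain energy estimates. The memory term is handled as follows. Because $\mathcal M\ge0$ and $\mathcal M'\le0$, $\mathcal M$ is a kernel of positive type, so $\int_0^t(\mathcal M*\nabla u_t,\nabla u_t)\,ds\ge0$. The weight $d(x,t)$ is moved onto the test function, and the commutator is estimated using $d\in W^{1,\infty}(0,T;L^\infty)$ and $d\ge d_0$.
\item The term $(a_v)_t u_t$ is absorbed with Gronwall's inequality, using $\eta\in W^{1,\infty}$.
\item The $L^2(H^2)$ bound comes from elliptic regularity applied to $c^2\Delta u=a_vu_{tt}-d\,\mathcal M*\Delta u_t-2\eta v_t^2$, using $c\in W^{1,\infty}$ and $c^2\ge c_0$.
\end{itemize}
These estimates give a bound on $\|u_m\|_{X_T}$, uniform in $m$, of the form $C(\|u_0\|_{H^2}+\|v\|_{X_T}^2)$. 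Weak-$*$ compactness and Aubin--Lions then yield a solution $u\in X_T$. Uniqueness for the linear problem follows from the same energy identity applied to the difference of two solutions.

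\textbf{Step 2 (fixed point).} Define $\Phi(v)=u$ and the ball $B=\{v\in X_T:\|v\|_{X_T}\le R,\ v(0)=u_0,\ v_t(0)=0\}$.
\begin{itemize}
\item \emph{Self-map.} The Step~1 bound shows that $\Phi(B)\subset B$ for a suitable $R$.
\item \emph{Contraction.} For $v_1,v_2\in B$, the difference $w=\Phi(v_1)-\Phi(v_2)$ satisfies the linear equation with source
\[
2\eta(v_1-v_2)\,\Phi(v_2)_{tt}+2\eta\,(v_{1,t}+v_{2,t})(v_{1,t}-v_{2,t}).
\]
We estimate this source in $L^1(0,T;L^2)$ or $L^2(0,T;L^2)$ via Sobolev embeddings for $n\le3$. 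This produces $\|w\|_{X_T}\le C\,T^{\theta}\,\|v_1-v_2\|_{X_T}$ on a short time interval $[0,T_*]$.
\item Banach's theorem then gives a unique solution on $[0,T_*]$.
\item \emph{Continuation.} The a priori bound from Step~1 depends only on the data and the coefficient norms, and it is propagated by the dissipative memory term. Restarting the argument at $T_*, 2T_*,\dots$ with a step length depending only on these uniform bounds covers all of $[0,T]$. Uniqueness on all of $[0,T]$ follows by the same difference estimate together with Gronwall's inequality.
\end{itemize}

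\textbf{Main obstacle.} The hardest step is the contraction estimate. The quasilinear term $u\,u_{tt}$ loses derivatives: $u_{tt}$ is only controlled in $L^2(0,T;L^2)$, and only through the equation itself. Two related difficulties follow. First, non-degeneracy $1-2\eta u>0$ must be maintained along the iteration. Second, the uniform bounds must be strong enough to continue the solution to the full interval $[0,T]$, and the continuation step requires exactly this.

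To handle this, I would first try the following:
\begin{itemize}
\item Estimate $\Phi(v)_{tt}$ from the equation, using $\|v\|_{L^\infty}\lesssim\|v\|_{H^2}$.
\item Exploit the positivity of the memory kernel to gain the extra $L^2(0,T;H^2)$-type damping needed for the $u\,u_{tt}$ term.
\end{itemize}
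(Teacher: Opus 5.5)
Your architecture is the paper's: freeze $v$ in $(1-2\eta v)u_{tt}$ and in $2\eta v_t^2$, solve the linear problem by Galerkin on Dirichlet eigenfunctions with energy estimates, Gronwall and Aubin--Lions, then apply Banach's theorem on a ball of $X_T$ with contraction constant $CT^{\theta}$. The proof is not complete, though. The step you call the main obstacle is left open, and at the regularity of $X_T$ it does not close, because the bounds you need on $v$ are not available. First, $v\in L^\infty(0,T;H^1)\cap L^2(0,T;H^2)$ together with $H^2\hookrightarrow L^\infty$ gives only $\|v(t)\|_{L^\infty(\Omega)}\in L^2(0,T)$ (or $L^4(0,T)$ by Agmon's inequality), not a bound on $\sup_t\|v(t)\|_{L^\infty}$. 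So $v\in B$ does not keep $a_v$ in $[1/2,3/2]$. Second, $\eta\in W^{1,\infty}$ does not put $(a_v)_t=-2\eta_t v-2\eta v_t$ in $L^1(0,T;L^\infty)$, which Gronwall needs to absorb $\tfrac12\int(a_v)_t|u_t|^2$. That would require $v_t\in L^1(0,T;L^\infty)$, whereas $X_T$ only gives $v_t\in L^\infty(0,T;L^2)$. With that, $2\eta v_t^2$ lies only in $L^\infty(0,T;L^1)$. Hence your bound $C(\|u_0\|_{H^2}+\|v\|_{X_T}^2)$ is unjustified. The contraction fails for the same reason: the sources $2\eta(v_1-v_2)\Phi(v_2)_{tt}$ and $2\eta(v_{1,t}+v_{2,t})(v_{1,t}-v_{2,t})$ lie in $L^1(0,T;L^2)$ only if, e.g., $\Phi(v_2)_{tt}\in L^1(0,T;L^3)$ and $v_{i,t}\in L^1(0,T;L^\infty)$, and neither $X_T$ nor your Step~1 provides this. (The paper secures non-degeneracy by shrinking $R$ and $T$. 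At that point it uses $v\in L^\infty(0,T;H^2)$ and simply assumes $(1-2\eta v)_t\in L^\infty$, and neither is part of $X_T$.)

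Your proposed remedy cannot supply the missing control. First, $\mathcal M\ge0$ and $\mathcal M'\le0$ do not make $\mathcal M$ of positive type: a smoothed $\chi_{[0,1]}$ has $\mathrm{Re}\,\hat{\mathcal M}(i\omega)\approx\sin\omega/\omega<0$ near $\omega=3\pi/2$, and the standard sufficient condition also requires convexity. More importantly, any $L^1$ kernel has $\mathrm{Re}\,\hat{\mathcal M}(i\omega)\to0$ as $|\omega|\to\infty$. So even when $\int_0^t(\mathcal M*\nabla u_t,\nabla u_t)\,ds\ge0$ holds, it admits no coercive lower bound $c\int_0^t\|\nabla u_t\|^2\,ds$. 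Indeed, $\mathcal M*\Delta u_t=\mathcal M(0)\Delta u-\mathcal M(t)\Delta u_0+\mathcal M'*\Delta u$. The memory term therefore only shifts the wave speed and adds a viscoelastic history term, the equation stays hyperbolic, and there is no $L^2(H^2)$-type gain as there would be with strong damping $b\Delta u_t$. The missing idea is the standard quasilinear one. Derive uniform bounds in a higher-order space where $v$, $\nabla v$ and $v_t$ are bounded in $L^\infty(\Omega\times(0,T))$ for $n\le3$, e.g.\ $L^\infty(0,T;H^3)\cap W^{1,\infty}(0,T;H^2)$, with $u_0\in H^3$ and compatibility conditions, via higher-order testing. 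Then prove the contraction only in the lower energy norm, and conclude using weak-$*$ closedness of the high-norm ball. Your continuation step is also unsupported. The Step~1 bound is quadratic in $\|v\|_{X_T}$, so the admissible step length depends on the current size of the solution, not only on the data. A non-coercive memory term cannot, without smallness, provide a step-independent a priori bound or preserve $1-2\eta u\ge\alpha$. In addition, restarting at $T_*$ with $u_t(T_*)\neq0$ and a nontrivial memory history does not fit your ball $B$. The paper does not attempt this; it takes $T$ and $R$ small. At best your argument yields a local-in-time solution.
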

To prove this theorem, we will first show the existence and uniqueness of solutions of a corresponding linearized equation: Choose a fixed $v \in X_T$ satisfying 
\begin{equation}\label{eq: nondeg}
    0< \alpha\leq  1-2\eta(x,t) v(x,t) \leq \beta <\infty 
\end{equation}
for all $v$ in a ball $B_R$ with radius $R$ such that $B_R \subset X_T$. Consider the following linear problem 
\begin{equation}\label{eq: linear pde}
\begin{aligned}
        (1-2\eta v)u_{tt} - c^2(x)\Delta u - d(x,t) \mathcal{M} * \Delta u_t &=  2 \eta v_t^2 \quad \text{in}~ (0,T) \times \Omega \\
        u(x,0)& = u_0(x) \\
        u_t(x,0) &= 0. \\
    \end{aligned}
\end{equation}
This can be ensured by the smallness assumption of $\|v\|_{L^{\infty}(0,T; L^{\infty}(\Omega))}$ and boundedness of $\eta$.
We remark that the ball $B_R \subset X_T$ does not directly control $\|v\|_{L^{\infty}(0,T;L^{\infty}(\Omega))}$. It controls only $\|v\|_{L^{\infty}(0,T;H^1_0(\Omega))}$,  $\|v_t\|_{L^{\infty}(0,T;L^2(\Omega))}$ and $\|\nabla v_t\|_{L^2(0,T;L^2(\Omega))}$.
Using the Sobolev embedding $H^1_0 (\Omega) \hookrightarrow L^6(\Omega) $ for $n \leq 3$, we get 
\[
\sup_{t \in [0,T]} \|v(t)\|_{L^6(\Omega)} \leq CR
\]
Fix $v \in B_R$, then for every $t \in [0,T]$, we have
\[
v(t) = v(0) + \int_0^t v_s(s) ds,
\]
which gives
\[
\|v(t)\|_{L^6} \leq \|v(0)\|_{L^6} + \int_0^t \|v_s(s)\|_{L^6} ds.
\]
Thus, we have
\begin{equation*}
\begin{aligned}
\int_0^t \|v_s(s)\|_{L^6} ds 
&\leq \left(\int_0^t 1^2 ds \right)^{\frac{1}{2}} \left(\int_0^t \|v_s(s)\|^2_{L^6}\right)^{\frac{1}{2}}\\
&\leq T^{\frac{1}{2}} \|v_s\|_{L^2(0,T;L^6)}\\
&\leq C T^{\frac{1}{2}} \|\nabla v_t\|_{L^2(0,T;L^2)}\\
& \leq CT^{1/2} R.
\end{aligned}
\end{equation*}
From the estimate
\begin{equation*}
\begin{aligned}
\|v(t)\|_{L^6(\Omega)}  &\le
\|v(0)\|_{L^6(\Omega)} + C T^{1/2} R,\\
&\leq C \|v(0)\|_{H^1}+ C T^{1/2} R\\
& \leq C(1+T^{1/2})R,
\end{aligned}
\end{equation*}
we obtain a uniform bound on $v$ in $L^\infty(0,T;L^6(\Omega))$.
Using the Sobolev-Gagliardo-Nirenberg inequality, we obtain 
\[
\|v(t)\|_{L^{\infty}(\Omega)} \leq C \|v(t)\|^{1/2}_{H^2(\Omega)} \|v(t)\|^{1/2}_{L^6(\Omega)}
\]
which gives $v \in L^{\infty}(0,T,L^{\infty})$ as $v \in L^{\infty}(0,T;H^2)$.\\
Further, to ensure the non-degeneracy condition
$0<\alpha\leq 1-2\eta(x,t) v(x,t) \le \beta < \infty$,
Assume $\eta \in L^{\infty}(\Omega \times (0,T))$. Then
we have
\[
2 \eta(x,t) v(x,t)\leq 2 \|\eta\|_{\infty} M
\]
which implies 
\[
1-2 \eta(x,t) v(x,t)\geq 1-2 \|\eta\|_{\infty} M. \\
\]
Choose $\alpha$ such that $1-2 \|\eta\|_{\infty} M > \alpha >0 $.
Thus, 
\[
M \leq \frac{1-\alpha}{2 \|\eta\|_{\infty}}.
\]
Since $M=C(R,T)$ depends on $R$ and on $T$, so choosing $R$ and possibly $T$ small enough gives us
\[
C(R,T) \leq \frac{1-\alpha}{2 \|\eta\|_{\infty}}.
\]

\begin{thm}\label{th:linearpde}
For each $v\in X_T$, there exists a unique solution of the linearized equation \eqref{eq: linear pde} in $X_T$, under the condition \eqref{eq: nondeg}.
\end{thm}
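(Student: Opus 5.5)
We use a Faedo--Galerkin scheme based on the Dirichlet eigenfunctions $\{w_k\}_{k\ge1}$ of $-\Delta$ on $\Omega$. These eigenfunctions are orthogonal in $L^2$ and $H^1_0$, and also in $H^2$ provided $\Omega$ is regular enough. We look for $u^m(t)=\sum_{k=1}^m a_k^m(t)w_k$ satisfying, for every $k\le m$,
\[
\big((1-2\eta v)u^m_{tt},w_k\big)+\big(c^2\nabla u^m+u^m\nabla c^2,\nabla w_k\big)+\big(d\,\mathcal M*\nabla u^m_t,\nabla w_k\big)+\big(\nabla d\cdot(\mathcal M*\nabla u^m_t),w_k\big)=(2\eta v_t^2,w_k),
\]
with $u^m(0)=P_mu_0$ and $u^m_t(0)=0$. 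Writing $A(t)=\big((1-2\eta v)w_j,w_k\big)_{jk}$, we have $A(t)\ge\alpha I$ by \eqref{eq: nondeg}, so $A(t)$ is invertible. The system is then a linear Volterra integro-differential ODE in $a^m$ with $L^\infty$ coefficients. By a Picard/Gronwall argument on $(a^m,\dot a^m)$ it has a unique solution on $[0,T]$.

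The core step is two uniform energy estimates.

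\textbf{First estimate.} Test with $u^m_t$. The leading term becomes
\[
\tfrac12\tfrac{d}{dt}\big((1-2\eta v)u^m_t,u^m_t\big)+\big(\eta_tv+\eta v_t,\,|u^m_t|^2\big).
\]
The correction is bounded by $\|\eta_tv+\eta v_t\|_{L^\infty}\|u^m_t\|^2$. This requires $v_t\in L^1(0,T;L^\infty)$, which holds because $v_t\in L^2(0,T;H^1)$ for $v\in B_R$ (this is what is used just before the theorem). In dimension $n\le3$ one may need to trade this for $\|v_t\|_{L^3}\|u^m_t\|_{L^2}\|u^m_t\|_{L^6}$ and absorb. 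The elliptic term gives $\frac12\frac{d}{dt}\|c\nabla u^m\|^2$ plus lower order terms controlled by $\|c\|_{W^{1,\infty}}$.

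The memory term is $\int_0^t\big(d(t)\,\mathcal M*\nabla u^m_t,\nabla u^m_t\big)$. Since $\mathcal M\ge0$ and $\mathcal M'\le0$, $\mathcal M$ is a positive definite kernel: for nonincreasing nonnegative kernels, $\int_0^t(\mathcal M*\phi,\phi)\,ds\ge0$. The difficulty is the weight $d(x,t)$. We plan to move it across by writing $d(t)=d(s)+\int_s^t d_\tau$. This yields the nonnegative quadratic form (with $d(s)\ge d_0$ inserted pointwise in $x$, which requires checking that the positive-kernel inequality holds pointwise in $x$ after integration), plus a remainder bounded via $\|d_t\|_{L^\infty}$ and $\|\mathcal M\|_{L^1}$ and Young's convolution inequality by $\varepsilon\|\nabla u^m_t\|^2_{L^2(0,t;L^2)}$ plus a Gronwall term. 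Alternatively, if this fails, we simply bound the whole memory term by Young's inequality and obtain $\nabla u_t$ control from a second estimate.

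The source is bounded by $\|\eta v_t^2\|_{L^2}\le C\|v_t\|_{L^4}^2\le C\|v_t\|_{H^1}^2$, which is integrable in time. Gronwall then gives a bound on $\sup_t\big(\|u^m_t\|^2+\|\nabla u^m\|^2\big)$ uniform in $m$.

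\textbf{Second estimate.} Test with $-\Delta u^m$, which is admissible thanks to the eigenfunction basis. The term $\big(c^2\Delta u^m,\Delta u^m\big)$ gives $c_0\|\Delta u^m\|^2_{L^2(0,T;L^2)}$. The inertial term is handled by integrating by parts in time:
\[
-\big((1-2\eta v)u^m_{tt},\Delta u^m\big)=-\frac{d}{dt}\big((1-2\eta v)u^m_t,\Delta u^m\big)+\dots
\]
The boundary-in-time term is estimated by $\varepsilon\|\Delta u^m\|^2+C\|u^m_t\|^2$. The remaining pieces contain $\nabla u^m_t$ and $\nabla(\eta v)$; they are controlled by the memory dissipation from the first estimate together with Young's inequality. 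Elliptic regularity then gives $u^m$ bounded in $L^2(0,T;H^2)$. Hence $u^m$ is bounded in $X_T$ uniformly in $m$.

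\textbf{Passage to the limit and uniqueness.} By weak-$*$ compactness and Aubin--Lions we extract $u^m\rightharpoonup u$ in $X_T$. Every term in the Galerkin equation is linear in $u^m$ with fixed coefficients, so we can pass to the limit. The equation then gives $u_{tt}=(1-2\eta v)^{-1}(\dots)$ in $L^2(0,T;H^{-1})$, which also justifies the initial conditions. For uniqueness, the difference $w$ of two solutions solves the homogeneous problem with zero data. Repeating the first energy estimate, where testing with $w_t$ is justified by regularization or by the Lions--Magenes lemma, gives $w=0$ via Gronwall.

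The main obstacle is the memory term with the time-dependent weight $d(x,t)$. Its coercivity relies on the positive-kernel property of $\mathcal M$ combined with a commutator estimate in $d_t$. A second difficulty is the low-regularity factor $v_t$ appearing in the time derivative of the coefficient $1-2\eta v$.
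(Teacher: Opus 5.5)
Your skeleton (Galerkin in the Dirichlet eigenbasis, the $u^m_t$-energy estimate, Aubin--Lions, energy uniqueness) matches the paper's, but the plan has a genuine gap. Every step beyond the basic energy bound rests on a dissipative bound for $\nabla u^m_t$ in $L^2(0,T;L^2)$, supposedly supplied by the memory term. You use it in four places:
\begin{itemize}
\item to absorb $\|u^m_t\|_{L^6}\lesssim\|\nabla u^m_t\|$ in the coefficient term;
\item to absorb the $d_t$-remainder $\varepsilon\|\nabla u^m_t\|^2_{L^2(0,t;L^2)}$;
\item to control the $\nabla u^m_t$ terms of the second estimate;
\item implicitly, in the Lions--Magenes justification of uniqueness, which needs $w_t\in L^2(0,T;H^1_0)$.
\end{itemize}
No such bound exists. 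First, $\mathcal M\ge0$ and $\mathcal M'\le0$ do not make $\mathcal M$ positive definite; one needs, e.g., convexity as well. For a smoothing of $\mathcal M=\chi_{[0,\delta)}$ and $\phi(t)=\cos\big(3\pi t/(2\delta)\big)$ one finds $\int_0^T\phi\,(\mathcal M*\phi)\,dt\approx-\delta T/(3\pi)<0$. Second, even a positive-definite $\mathcal M\in L^1$ gives only a nonnegative form. By Riemann--Lebesgue its symbol decays at high temporal frequency, so it never dominates $c\int_0^T\|\nabla u_t\|^2\,dt$. Structurally, $\mathcal M*u_t=\mathcal M(0)u-\mathcal M(t)u_0+\mathcal M'*u$. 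So the memory term only modifies the stiffness and adds a Volterra term in $\Delta u$; the problem stays hyperbolic and does not smooth.

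Hence the second estimate cannot close, and your fallback (Young on the memory term, $\nabla u_t$ from the second estimate) is circular. Testing with $-\Delta u^m$ puts $+\int_0^T\!\int_\Omega(1-2\eta v)|\nabla u^m_t|^2$ on the right-hand side. For $u_{tt}=\Delta u$ this is equipartition, $\int_0^T\|\Delta u\|^2dt=\int_0^T\|\nabla u_t\|^2dt-[(\nabla u_t,\nabla u)]_0^T$, so that estimate consumes rather than produces $\nabla u_t$ control. Its boundary term also involves $\|\Delta u^m(t)\|$ pointwise in $t$, which $\varepsilon\int_0^T\|\Delta u^m\|^2$ cannot absorb.

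In fact the $L^2(0,T;H^2)$ part of the statement is false for data merely in $H^1$ (the class used in \eqref{eq:linestimate}). Take $v=0$, $c\equiv1$, $\mathcal M\equiv0$, which is admissible in \eqref{eq:assumptions}; a constant $\mathcal M$ only replaces $c^2$ by $c^2+d\mathcal M$ and adds the forcing $-d\mathcal M\Delta u_0$. Then $u(t)=\cos(t\sqrt{-\Delta_D})u_0$, and $\int_0^T\|\Delta u\|^2dt=\sum_k\lambda_k^2|(u_0,\phi_k)|^2\int_0^T\cos^2(\sqrt{\lambda_k}\,t)\,dt=\infty$ for $u_0\in H^1_0\setminus H^2$, since the time integrals are $\ge T/4$ for large $k$.

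The paper's proof has the same hole. Its Gr\"onwall step absorbs $D_2\|\nabla u_{N,t}(t)\|^2_{L^2}$ into $C\mathcal E_N(t)$, which does not contain it, and that is where its bound (c) comes from. The $L^2(0,T;H^2)$ membership and uniqueness are asserted, not derived. So no trick will rescue this step under the stated hypotheses.

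What a Galerkin argument can realistically give is existence and uniqueness in $C([0,T];H^1_0)\cap C^1([0,T];L^2)$. That requires handling the memory term through the identity above (with extra regularity of $\mathcal M$) rather than as dissipation, and proving uniqueness with Ladyzhenskaya's test function $\int_t^s w\,d\tau$. $H^2$ regularity needs $u_0\in H^2\cap H^1_0$ and a second-order energy estimate for the time-differentiated equation.

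Finally, $v\in X_T$ gives only $v_t\in L^\infty(0,T;L^2)$, not $L^2(0,T;H^1)$, and $H^1\not\hookrightarrow L^\infty$ for $n=2,3$. So the coefficient term $(\eta_tv+\eta v_t)|u^m_t|^2$ and the source $2\eta v_t^2$ need regularity of $v$ beyond $X_T$, which the paper also simply assumes.
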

\begin{proof}
Now, we have the following weak form for a.e. $t$ and all $\varphi \in H^1_0(\Omega)$
\begin{equation}\label{eq:Weak form}
    \begin{aligned}
        & \int_{\Omega}(1-2\eta v)\, u_{tt} \, \varphi \,dx + \int_{\Omega} c^2(x)\nabla u \cdot   \nabla\varphi dx  + \int_{\Omega} (\nabla c^2 \cdot \nabla u ) \,\varphi \,dx +\int_{\Omega} d(x,t) \nabla (\mathcal{M} *  u_{t}) \cdot \nabla  \varphi \, dx\\
        & + \int_{\Omega} \nabla d(x,t) \cdot \nabla (\mathcal{M} *  u_{t}) \,\varphi \, dx =  \int_{\Omega} 2 \, \eta \, v_t^2 \, \varphi \, dx
    \end{aligned}
\end{equation}
Let $\{\phi_k\}_{k=1}^{\infty}$ be eigenfunctions of Dirichlet Laplacian, i.e.,
\begin{equation} \label{eq:Galerkin}
    \begin{aligned}
    -\Delta \phi_k &= \lambda_k \phi_k \quad \text{in}~ \Omega,    \\
    \phi_k  & =0 \,\,\,\qquad \text{on}~ \partial \Omega,
    \end{aligned}
\end{equation}
with $0< \lambda_1\leq \lambda_2 \leq \hdots$. 
For $N \in \mathbb{N}$, define the finite dimensional subspace 
\[
V_N := \text{span} \{\phi_1,\phi_2,\dots, \phi_N\} \subset V,
\]
and let $u_N$ be an approximate solution of \eqref{eq: linear pde} of the form
\[
u_N(x,t) = \sum_{i=1}^N q_k^N(t) \phi_k(x)
\]
where the scaler functions $q_k^N$ are determined by requiring that the weak form of \eqref{eq: linear pde} holds for all test functions $\varphi \in V_N$.  This provides an \textit{N}-dimensional system of second-order ODE for $q^N_k$, with initial conditions obtained by projecting $u_0$ onto $V_N$. Thus, we obtain a finite-dimensional Volterra integro-differential system, which has a unique local solution in time by standard ODE theory\cite{conti2014well,landes1980galerkin}.  

We now define the energy $E_N(t)$ as follows:
\begin{equation}
    \begin{aligned}
        E_N(t) & := \frac{1}{2} \int_{\Omega} (1-2\eta v) |u_{N,t}|^2 dx +\frac{1}{2} \int_{\Omega} c^2(x) |\nabla u_{N}|^2 dx 
    \end{aligned}
\end{equation}
\begin{equation}
    \begin{aligned}
        \mathcal{E}_N(t) & := \frac{1}{2} \int_{\Omega} (1-2\eta v) |u_{N,t}|^2 dx +\frac{1}{2} \int_{\Omega} c^2(x) |\nabla u_{N}|^2 dx + \frac{1}{2} \int_0^t \int_{\Omega} d(x,s) \mathcal{M}(t-s)|\nabla (u_{N,t})(x,s)|^2 dx\, ds
    \end{aligned}
\end{equation}
Using \eqref{eq: nondeg} and positivity of $d$, this is equivalent to 
\[
 \mathcal{E}_N(t) \approx \|u_{N,t}\|^2_{L^2} + \|\nabla u_N(t)\|_{L^2}^2 + \int_0^t \|\nabla(\mathcal{M} * u_{N,t})(s) \|^2_{L^2}ds 
\]
Observe, $E_N(t) \leq \mathcal{E_N}(t)$ as $d > 0$. Now consider $\varphi = u_{N,t}(t)$ in \eqref{eq:Weak form}, we obtain
\begin{equation}\label{eq: Galerkin_Solution}
    \begin{aligned}
       & \int_{\Omega}(1-2\eta v)\, u_{N,tt} u_{N,t} \,dx + \int_{\Omega} c^2(x)\nabla u_N \cdot \nabla u_{N,t}  dx  + \int_{\Omega} (\nabla c^2 \cdot \nabla u_N ) \,u_{N,t}(t) \,dx \\
        &+\int_{\Omega} d(x,t) \nabla (\mathcal{M} *  u_{N,t}) \cdot \nabla  u_{N,t} \, dx + \int_{\Omega} \nabla d(x,t) \cdot \nabla (\mathcal{M} *  u_{N,t}) \,u_{N,t}\, dx =  \int_{\Omega} 2 \, \eta \, v_t^2 \, u_{N,t} \, dx
    \end{aligned}
\end{equation}
Note that
\begin{equation}
    \begin{aligned}
       \int_{\Omega} (1-2 \eta v)u_{N,tt} u_{N,t} dx & = \frac{1}{2} \frac{d}{dt} \int_{\Omega} (1-2 \eta v) |u_{N,t}|^2 dx - \frac{1}{2} \int_{\Omega} (1-2\eta v)_t |u_{N,t}|^2 dx \\
       \int_{\Omega} c^2(x) \nabla u_N \cdot \nabla u_{N,t} dx & = \frac{1}{2} \frac{d}{dt} \int_{\Omega} c^2|\nabla u_N|^2 dx 
       \end{aligned}
\end{equation}
Substituting these values in equation \eqref{eq: Galerkin_Solution}, we obtain:
\begin{equation}
    \begin{aligned}
      & \frac{d}{dt} E_N(t) -\frac{1}{2} \int_{\Omega} (1-2 \eta v)_t |u_{N,t}|^2 +\int_{\Omega} (\nabla c^2 \cdot \nabla u_N ) \,u_{N,t}(t) \,dx  +\int_{\Omega} \nabla d(x,t) \cdot \nabla (\mathcal{M} *  u_{N,t}) \,u_{N,t}\, dx\\
      & +\int_{\Omega} d(x,t) \nabla (\mathcal{M} *  u_{N,t}) \cdot \nabla  u_{N,t} \, dx =\int_{\Omega} 2 \, \eta \, v_t^2 \, u_{N,t} \, dx\\
    \end{aligned}
\end{equation}
Hence,
\begin{equation}
    \begin{aligned}
       \frac{d}{dt} E_N(t) &= \frac{1}{2} \int_{\Omega} (1-2 \eta v)_t |u_{N,t}|^2 -\int_{\Omega} (\nabla c^2 \cdot \nabla u_N ) \,u_{N,t}(t) \,dx-\int_{\Omega} \nabla d(x,t) \cdot \nabla (\mathcal{M} *  u_{N,t}) \,u_{N,t}\, dx\\  
       &  -\int_{\Omega}  d(x,t)  \nabla (\mathcal{M} *  u_{N,t}) \cdot \nabla u_{N,t}\, dx+\int_{\Omega} 2 \, \eta \, v_t^2 \, u_{N,t} \, dx\\
    \end{aligned}
\end{equation}
Next, we estimate all non-energy terms. Let $(1-2\eta v)_t \in L^{\infty}(0,T; L^{\infty}(\Omega))$
\begin{equation}
    \begin{aligned}
        \left |\frac{1}{2} \int_{\Omega} (1-2 \eta v)_t |u_{N,t}|^2 \right | & \leq \frac{1}{2} \|(1-2 \eta v)_t\|_{L^{\infty}} \|u_{N,t}\|^2_{L^2} \\
        & \leq C_1 \|u_{N,t}\|^2_{L^2} \\
        & \leq C_1 {E}_N(t)
    \end{aligned}
\end{equation}
Assume $\nabla c^2 \in L^{\infty} (\Omega)$. Then \\
\begin{equation}
    \begin{aligned}
        \left |-\int_{\Omega} (\nabla c^2 \cdot \nabla u_N ) \,u_{N,t}(t) \,dx \right | & \leq \|\nabla c^2\|_{L^{\infty}} \|\nabla u_N\|_{L^2} \|u_{N,t}\|_{L^2}  \\
        & \leq C_2 (\|\nabla u_N\|^2_{L^2} + \|u_{N,t}\|^2_{L^2})\\
        & \leq C_2 {E}_N(t)
        \end{aligned}
\end{equation}
For the memory term, we obtain the following estimate
\begin{equation}
\begin{aligned}
\left|\int_{\Omega} d \nabla (\mathcal{M} * u_{N,t}) \cdot \nabla u_{N,t} \right| 
& \leq \|d\|_{L^{\infty}} \int_0^t \mathcal{M}(t-s) \|\nabla u_{N,t}(s)\|_{L^2(\Omega)} \|\nabla u_{N,t}\|_{L^2(\Omega)} ds \\
& \leq  \|d\|_{L^{\infty}} \|\nabla u_{N,t}\|^2_{L^2(\Omega)} + \frac{1}{2}\|d\|_{L^{\infty}}  \int_0^t \mathcal{M}(t-s) \| \nabla u_{N,t}\|^2_{L^2} 
\end{aligned}
\end{equation}
Assume $\nabla d \in L^{\infty}(\Omega \times (0,T))$, then we have 
\begin{equation}
\begin{aligned}
\left|\int_{\Omega} \nabla d(x,t) \cdot \nabla (\mathcal{M} *  u_{N,t}) \,u_{N,t}\, dx \right| & \leq \|\nabla d\|_{L^{\infty}} \|u_{N,t}(t)\|_{L^2} \int_0^t \mathcal{M}(t-s)\|\nabla u_{N,t}(s)\|_{L^2}ds \\
& \leq \epsilon \|u_{N,t}\|^2_{L^2} + C_{\epsilon} \int_0^t \mathcal{M}(t-s)\|\nabla u_{N,t}(s)\|_{L^2}^2 ds \\
& \leq \epsilon E_N(t) + C_{\epsilon} \int_0^t \mathcal{M}(t-s)\|\nabla u_{N,t}(s)\|_{L^2}^2 ds, \\
\end{aligned}
\end{equation}
where, in the last step, we used the definition of $E_{N,t}$. 
Let $ \eta \in L^{\infty} $ and $v_t \in L^4(\Omega)$, and then using Cauchy-Schwarz and Young's inequality, we get
\begin{equation}
    \begin{aligned}
        \left |\int_{\Omega} 2 \, \eta \, v_t^2 \, u_{N,t} \, dx  \right | &\leq  2\|\eta\|_{L^{\infty}} \|v_t^2\|_{L^2} \|u_{N,t}\|_{L^2} \\
        & \leq  2\|\eta\|_{L^{\infty}} \|v_t\|^2_{L^4} \|u_{N,t}\|_{L^2} \\
        & \leq \epsilon \|u_{N,t}\|^2_{L^2} + \|v_t\|^4_{L^4}C_{\epsilon} 
    \end{aligned}
\end{equation}
for any $\epsilon > 0$. Choosing $\epsilon$ small and using $\|u_t\|^2_{L^2} \leq E_N(t)$ gives
\[
 \left |\int_{\Omega} 2 \, \eta \, v_t^2 \, u_{N,t} \, dx  \right | \leq C_4 {E}_N(t) +F(t)
\]
with $F(t) := C_{\epsilon} \|v_t(t)\|_{L^4}^4 \leq L^1(0,T)$ for fixed $v$. 
Combining all the estimates, we get:
{\begin{equation}
    \begin{aligned}
       \frac{d}{dt} E_N(t) &= \frac{1}{2} \int_{\Omega} (1-2 \eta v)_t |u_{N,t}|^2 -\int_{\Omega} (\nabla c^2 \cdot \nabla u_N ) \,u_{N,t}(t) \,dx-\int_{\Omega} \nabla d(x,t) \cdot \nabla (\mathcal{M} *  u_{N,t}) \,u_{N,t}\, dx\\  
       & -\int_{\Omega}  d(x,t) \cdot \nabla (\mathcal{M} *  u_{N,t}) \cdot \nabla u_{N,t}\, dx +\int_{\Omega} 2 \, \eta \, v_t^2 \, u_{N,t} \, dx\\
       & \leq D_1 E_N(t) + D_2 \|\nabla u_{N,t}\|^2_{L^2} + D_3 \int_0^t \mathcal{M}(t-s) \|\nabla u_{N,t}\|^2_{L^2} + F(t)\\
    \end{aligned}
\end{equation}
Let
\[
H_N=\left[\frac{1}{2} \int_0^t \int_{\Omega} d(x,s) \mathcal{M}(t-s) |\nabla u_{N,t} (x,s)|^2 \right]
\]
Using Leibniz's rule, we obtain
\begin{equation}
  \begin{aligned}
    \frac{dH_N}{dt} &= \frac{1}{2} \mathcal{M}(0) \int_{\Omega} d(x,t) |\nabla u_{N,t} (x,t)|^2 dx + \frac{1}{2}\int_0^t \int_{\Omega}\mathcal{M'}(t-s) d(x,s)|\nabla u_{N,t} (x,s)|^2dxds\\
    & \leq d_1 \left(\int_{\Omega}|\nabla u_{N,t} (x,t)|^2 dx+\int_0^t \int_{\Omega}\mathcal{M'}(t-s) |\nabla u_{N,t} (x,s)|^2dxds \right) \quad \text{as }d \in W^{1,\infty}(0,T; L^{\infty}(\Omega))\\
    &= d_1 \left(||\nabla u_{N,t} (x,t)||_{L^{2}}^2 dx+\int_0^t \int_{\Omega}\mathcal{M'}(t-s) |\nabla u_{N,t} (x,s)|^2dxds\right) 
\end{aligned}  
\end{equation}
%
Finally, we have
\begin{equation}
\begin{aligned}
\frac{d}{dt}\mathcal{E}_N(t)& = \frac{d}{dt}{E}_N(t)+\frac{d}{dt}{H}_N(t)\\
& \leq  D_1 E_N(t) + D_2 \|\nabla u_{N,t}\|^2_{L^2} + D_3 \int_0^t \mathcal{M}(t-s) \|\nabla u_{N,t}\|^2_{L^2} + F(t)+ \int_0^t \int_{\Omega}\mathcal{M'}(t-s) |\nabla u_{N,t} (x,s)|^2dxds\\
&\leq D_1 E_N(t) + D_2 \|\nabla u_{N,t}\|^2_{L^2} + D_3 \int_0^t \mathcal{M}(t-s) \|\nabla u_{N,t}\|^2_{L^2} + F(t) \quad \text{(as } \mathcal{M'} \leq 0)
\end{aligned}
\end{equation}}
Since $\mathcal{M} \in L^1(0,T)$ which gives us
\begin{equation}
\begin{aligned}
\frac{d}{dt}\mathcal{E}_N((t)  
&  \leq C \mathcal{E}_N(t) + F(t)
\end{aligned}
\end{equation}
Using Gr\"{o}nwall inequality, we obtain the uniform bound on $E(t)$.
\begin{equation}\label{eq: uniform bound}
    \mathcal{E}_N(t) \leq \left(\mathcal{E}_N(0) +  \int_0^t F(s) ds\right) e^{Ct}, \quad ~t \in [0,T],
\end{equation}
where the constant $C$ is independent of $N$. Since $\mathcal{E}_N(0)$ depends only on $u_0$, $F(t)= C\|v_t(t)\|^4_{L^4(\Omega)} \in L^1(0,T)$ and constants are independent of $N$ . Therefore, we have
\[\text{sup}_{t\in[0,T]} \mathcal{E}_N(t) \leq C_T
\]
Using the above uniform bound \eqref{eq: uniform bound}, we obtain:
\begin{enumerate}[(a)]
    \item $\{u_N\}$ is bounded in $L^{\infty}(0,T;H^1(\Omega))$,
    \item $\{u_{N,t}\}$ is bounded  in $L^{\infty}(0,T;L^2(\Omega))$,
    \item $\{\nabla u_{N,t}\}$ bounded in $L^2(0,T;L^2(\Omega))$.
\end{enumerate}
 From the Galerkin equation, we have 
 \[
 (1-2\eta v_{tt}) u_{N,tt} = c^2 \Delta u_N + d \mathcal{M}*\Delta u_{N,t} + 2 \eta v_t^2
 \]
Note that each term on the right-hand side is bounded in $L^2(0, T; H^{-1}(\Omega))$ because $\Delta u_N, \mathcal{M}* \Delta u_{N,t}$ and $v^2_t$ belong to $H^{-1}(\Omega)$.
Hence $u_{N,tt}$ is bounded in $L^2(0,T;H^{-1}(\Omega))$
uniformly in $N$. 
Using Aubin-Lions lemma \cite{ferreira2023global}, there exists a strongly convergent subsequence (say $u_N$), i.e.,
 \[
 u_N \rightarrow u ~\text{in}~ C([0,T]; L^2(\Omega))
 \]
 with
 \[
 u \in L^{\infty}(0,T; H^1(\Omega)), u_t \in L^{\infty}(0,T;L^2(\Omega)), u_{tt} \in L^2(0,T;H^{-1}(\Omega)). 
 \]
 Passing the limit in the weak formulation gives $u$ as a weak solution of the linear problem for the fixed $v$. 
 Thus for each fixed $v\in B_R \subset X_T$, there exists a unique $u\in X_T$ that solves the linearized equation, and
 \begin{equation}\label{eq:linestimate}
 \|u\|_{X_T} \leq C_T (\|u_0\|_{\mathcal{H}} + \|v_t\|^2_{L^4(0,T;L^4)}), \quad \text{where}~ {\mathcal{H}} = H^1(\Omega) \times L^2(\Omega)  
 \end{equation}
\end{proof}

\begin{proof}[Proof of Theorem \ref{th:NDVW}]
We define the solution operator 
\[
\Phi : X_T \rightarrow X_T, \quad \Phi(v)=u,
\]
where, $u$ is the unique weak solution of \eqref{eq: linear pde} corresponding to $v$. The estimate \eqref{eq:linestimate} shows that $\Phi$ is well-defined.
Next, we will show that $\Phi$ maps $B_R$ into itself.
Choose $R > 0$ such that 
\[
\|(u_0,u_1)\|_{\mathcal{H}} \leq \frac{R}{2C_T}.
\]
and take $T>0$ small enough that
the \eqref{eq: nondeg} holds for all $v \in B_R$ and 
the source term satisfies the following bound $\|v_t\|^2_{L^4(0,T;L^4)} \leq \frac{R}{2C_T}$. This gives us
\[
\|\Phi(v)\|_{X_T} \leq R ~\text{for all}~ v \in B_R.
\]
Hence, $\Phi(B_R) \subset B_R$.

Next, we will prove the contraction property.
Let $v_1,v_2 \in B_R$, denote $u^i = \Phi (v^i)$ and $w=u^1-u^2$. Then $w$ solves a linear equation with coefficients $(1-2\eta v^1)- (1-2\eta v^2)= 2\eta (v^2-v^1)$ and the source term $(v^1_t)^2-(v^2
_t)^2$, i.e.,
\begin{equation}\label{eq: v1-v2}
\begin{aligned}
(1-2\eta v_1) w_{tt} - c^2 \Delta w - d(\mathcal{M} * \Delta w_t) &= 2 \eta (v_{1,t}^2 - v_{2,t}^2) + 2\eta (v_2-v_1) u_{2,tt}        
\end{aligned}
\end{equation}
with initial data:
\[
w(x,0)=0,~ \text{and}~  w_t(x,0)=0
\]
Now, define the energy
\begin{equation}
\begin{aligned}
E_w(t): = \frac{1}{2} \int_{\Omega} (1-2\eta v_1) |w_t|^2 + \frac{1}{2} \int_{\Omega} c^2 |\nabla w|^2
\end{aligned}
\end{equation}
As in the linear case, a similar result can be obtained for $w_t$:
\[
\frac{d}{dt} E_w(t) \leq C E_w(t) + I_1(t) + I_2(t)        
\]
where,
\[
I_1(t) = \int_{\Omega} 2 \eta (v_{1,t}^2 - v_{2,t}^2 ) w_t
\]
\[
I_2(t) = \int_{\Omega} 2 \eta (v_2 - v_1) u_{2,tt} w_t
\]
The coefficient term $(1-2\eta v_1)$ is handled as before and will be absorbed into $CE_w(t)$.
We now have the following estimate for $I_1(t)$:
\begin{equation*}
    \begin{aligned}
        |I_1(t)| &\leq C\|v_{1,t} + v_{2,t}\|_{L^4} \|v_{1,t} - v_{2,t}\|_{L^4} \|w_t\|_{L^2} \\
        & \leq \epsilon \|w_t\|^2_{L^2} + C_{\epsilon} \|v_{1,t} + v_{2,t}\|^2_{L^4} \|v_{1,t} - v_{2,t}\|^2_{L^4}
    \end{aligned}
    \end{equation*}
Since $v_i \in B_R$, we have
\[
\|v_{i,t} \|_{L^{\infty}(0,T;L^2)} \leq R
\]
So,
\[
|I_1(t)| \leq \epsilon \|w_t\|^2_{H^1} + C_{\epsilon}(\|v_1 - v_{2}\|^2_{H^1})
\]
For $I_2$, 
\begin{equation*}
    \begin{aligned}
         |I_2(t)| &\leq 2 \|\eta\|_{L^{\infty}} \|(v_2 - v_1) w_t \|_{H^1_0(\Omega)} \|u_{2,tt}\|_{H^{-1} (\Omega)}\\
         & \leq C \|v_2-v_1\|_{H^1} \|w_t\|_{H^1}\|u_{2,tt}\|_{H^{-1}} \\
         & \leq C \|v_2-v_1\|_{H^1} \|w_t\|_{H^1}
    \end{aligned}
\end{equation*}
Since $u=\phi(v_1)$, so $\|u_{2,tt}\|_{H^{-1}} \leq C$.
Then, applying Young's inequality, we get
\[
|I_2(t)| \leq \epsilon \|w_t(t)\|^2_{H^1} + C \|v_2 - v_1\| 
\]
Substituting these estimates finally gives,
\[
\frac{d}{dt} E_w(t) \leq C_1 E_w(t) + C_2 \|v_2-v_1\|^2_{H^1}
\]
Then using Growwall, 
the energy estimate provides the following inequality: 
\[
\|w\|_{X_T} \leq CT^{\gamma} \|v^1-v^2\|_{X_T}
\]
for some $\gamma > 0$, where smallness comes from the factors $\|v^i\|_{X_T}$ and small time interval $[0,T]$. For $T>0$ sufficiently small, $CT^{\gamma} < 1$, so $\Phi$ is a contraction map on $B_R$. By Banach's fixed-point theorem, $\Phi$ has a unique fixed point such that:
\[
u \in B_R \subset X_T, \Phi (u) =u 
\]
Thus, u is a unique solution of \eqref{eq: Main PDE}.
\end{proof}

\begin{thm}[Stability]\label{th:stab}
Let $u^{(1)}$ and $u^{(2)}$ be two sufficiently regular solutions of
\begin{equation}
u_{tt}-c^2(x)\Delta u
-d(x,t)\int_0^t \mathcal M(t-s)\Delta u_t(s)\,ds
=
\eta(x,t)(u^2)_{tt}
\label{eq:main}
\end{equation}
in $\Omega\times(0,T)$, with boundary conditions
\[
u^{(i)} = g^{(i)}(x,t)
\quad \text{on } \partial\Omega\times(0,T),
\]
and initial conditions
\[
u^{(i)}(x,0)=u_0^{(i)}(x),
\qquad
u_t^{(i)}(x,0)=u_1^{(i)}(x),
\]
for $i=1,2$.
Assume that
\[
c(x)\ge c_0>0,
\qquad
d(x,t)\ge d_0>0,
\]
and
\[
\mathcal M(t)\ge0,
\qquad
\mathcal M'(t)\le0.
\]
Further assume
\[
u^{(i)}\in L^\infty(0,T;H^2(\Omega)),
\qquad
u_t^{(i)}\in L^\infty(0,T;H^1(\Omega)),
\]
and
\[
\|\eta A\|_{L^\infty(\Omega\times(0,T))}<1,
\]
where
\[
A=u^{(1)}+u^{(2)}.
\]
Then the solutions depend continuously on the initial and boundary data. More precisely,
\begin{equation}\label{eq:stability}
\|u^{(1)}-u^{(2)}\|
\le
Ce^{Ct}
\left(
\|u_0^{(1)}-u_0^{(2)}\|_{H^1(\Omega)}
+
\|u_1^{(1)}-u_1^{(2)}\|_{L^2(\Omega)}
+
\|g^{(1)}-g^{(2)}\|_{\partial \Omega}
\right).
\end{equation}
\end{thm}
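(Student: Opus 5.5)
We work with the difference $w=u^{(1)}-u^{(2)}$ and aim for a weighted energy estimate closed by Gr\"onwall's inequality. Since $(u^{(1)})^2-(u^{(2)})^2=Aw$, subtracting the two equations gives
\[
w_{tt}-c^2\Delta w-d\,\mathcal M*\Delta w_t=\eta (Aw)_{tt}
=\eta A\,w_{tt}+2\eta A_t w_t+\eta A_{tt} w ,
\]
that is,
\[
(1-\eta A)w_{tt}-c^2\Delta w-d\,\mathcal M*\Delta w_t=2\eta A_t w_t+\eta A_{tt}w .
\]
The hypothesis $\|\eta A\|_{L^\infty}<1$ makes the weight $a:=1-\eta A$ bounded above and below by positive constants. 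This is the analogue of the non-degeneracy condition \eqref{eq: nondeg} used in Theorem \ref{th:linearpde}. The data for $w$ are $w(0)=u_0^{(1)}-u_0^{(2)}$, $w_t(0)=u_1^{(1)}-u_1^{(2)}$, and $w=g^{(1)}-g^{(2)}$ on $\partial\Omega$.

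The boundary data are handled by lifting. We choose an extension $G$ of $g^{(1)}-g^{(2)}$, for instance the harmonic extension at each time, with $\|G\|$ controlled by $\|g^{(1)}-g^{(2)}\|_{\partial\Omega}$. We interpret the latter as a norm strong enough to bound $G,G_t,G_{tt}$ in $L^2(0,T;H^1)$, say in $H^{2}(0,T;H^{1/2}(\partial\Omega))$. We then set $z=w-G$, so that $z=0$ on $\partial\Omega$. Then $z$ satisfies the same equation plus a source
\[
S=-aG_{tt}+c^2\Delta G+d\,\mathcal M*\Delta G_t+2\eta A_tG_t+\eta A_{tt}G .
\]
The terms involving $\Delta G$ are placed in $H^{-1}$, or vanish if $G$ is harmonic.

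Next we test with $z_t$ and form the energy
\[
\mathcal E(t)=\tfrac12\int_\Omega a|z_t|^2+\tfrac12\int_\Omega c^2|\nabla z|^2+\tfrac12\int_0^t\!\!\int_\Omega d\,\mathcal M(t-s)|\nabla z_t(s)|^2 ,
\]
exactly as in the proof of Theorem \ref{th:linearpde}. Each resulting term is bounded as follows.
\begin{itemize}
\item The term $-\tfrac12\int a_t|z_t|^2$ requires $a_t=-(\eta A)_t\in L^\infty$. This follows from $u_t^{(i)}\in L^\infty(0,T;H^1)$ only after an additional boundedness argument, so we assume $A_t\in L^\infty$ in the spirit of ``sufficiently regular''.
\item The term with $\nabla c^2$ and the term with $\nabla d$ paired against the memory are handled by Young's inequality as before.
\item The memory term is dealt with using the Leibniz derivative of the history part together with $\mathcal M'\le0$.
\item The term $2\eta A_tz_t$ is bounded by $C\|z_t\|^2$.
\item The term $\eta A_{tt}z\,z_t$ is the delicate one. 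With $A_{tt}\in L^\infty(0,T;L^2)$ from the equation, and using $H^1_0\hookrightarrow L^6$ ($n\le3$), we estimate $\|A_{tt}z\|_{L^2}\le \|A_{tt}\|_{L^3}\|z\|_{L^6}$. If only $L^2$ control of $A_{tt}$ is available, we instead integrate by parts in time, $\int_0^t\!\int \eta A_{tt}zz_t=[\,\cdot\,]-\int\!\int(\eta A_t)_t\ldots$, moving one derivative off $A_{tt}$.
\item The source is estimated by $\int Sz_t\le \|z_t\|^2+\|S\|^2$.
\end{itemize}
This yields
\[
\mathcal E'(t)\le C\mathcal E(t)+C\|S(t)\|^2 ,
\]
and Gr\"onwall's inequality gives
\[
\mathcal E(t)\le e^{Ct}\Big(\mathcal E(0)+\int_0^t\|S\|^2\Big).
\]

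Finally we recover $w$. Since $\mathcal E(0)\lesssim \|u_0^{(1)}-u_0^{(2)}\|_{H^1}^2+\|u_1^{(1)}-u_1^{(2)}\|_{L^2}^2+\|G(0)\|_{H^1}^2+\|G_t(0)\|_{L^2}^2$, the Poincar\'e inequality for $z$ and the triangle inequality $w=z+G$ give \eqref{eq:stability}. The norm on the left of \eqref{eq:stability} is read as $\sup_t(\|w\|_{H^1}+\|w_t\|_{L^2})$.

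The main obstacle is the lower-order coefficient $\eta A_{tt}$ produced by the quasilinear term. Closing the estimate requires a quantitative bound on $u^{(i)}_{tt}$, obtained from the equation via the assumed $H^2/H^1$ regularity. It also requires compatibility of the boundary-data norm with the lifting. I would first try the time integration by parts above, which reduces matters to $A_t\in L^\infty(0,T;L^3)$, and this is available from $u_t^{(i)}\in L^\infty(0,T;H^1)$ by Sobolev embedding.
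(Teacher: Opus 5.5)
Your overall route is the paper's. You lift the boundary data, absorb the quasilinear term into the weight $a=1-\eta A$ (the paper reaches the same weight by moving $\tfrac12\frac{d}{dt}\int_\Omega\eta A|y_t|^2$ to the left), test with $z_t$, and close with Gr\"onwall. You are also right that $A_t\in L^\infty$ and usable bounds on $A_{tt}$ do not follow from the stated regularity; the Sobolev embedding the paper invokes only gives $A\in L^\infty$. The genuine gap is the memory term, which you import ``exactly as in the proof of Theorem~\ref{th:linearpde}''. Testing with $z_t$ produces the sign-indefinite term
\[
\int_\Omega d(x,t)\Big(\int_0^t\mathcal M(t-s)\nabla z_t(s)\,ds\Big)\cdot\nabla z_t(t)\,dx,
\]
while the Leibniz derivative of your history term is
\[
\tfrac12\mathcal M(0)\int_\Omega d(x,t)|\nabla z_t(t)|^2\,dx+\tfrac12\int_0^t\!\!\int_\Omega d(x,s)\,\mathcal M'(t-s)|\nabla z_t(s)|^2\,dx\,ds .
\]
Only the second piece has a good sign. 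The first adds $+\tfrac12\mathcal M(0)\int_\Omega d|\nabla z_t(t)|^2$ to $\mathcal E'$, and the cross term can only be bounded through $\|\nabla z_t(t)\|_{L^2}$. Neither is controlled by $\mathcal E(t)$, which sees $\nabla z_t$ only through a weighted past average. The equation also has no strong damping that would supply $\int_0^t\|\nabla z_t\|^2$. So $\mathcal E'\le C\mathcal E+C\|S\|^2$ does not follow. The paper's proof has the same defect: it bounds $\tfrac12\mathcal M(0)\|\nabla y_t(t)\|^2$ by a constant, which is incompatible with a stability estimate. Citing it therefore does not help. A workable route is to integrate by parts inside the convolution,
\[
\mathcal M*\Delta z_t=\mathcal M(0)\Delta z-\mathcal M(t)\Delta z(0)+\mathcal M'*\Delta z .
\]
The elastic coefficient then becomes $c^2+d\,\mathcal M(0)$, and the memory acts on $\nabla z$ with kernel $g=-\mathcal M'\ge0$. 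You then put the Dafermos-type functional $\int_0^t g(t-s)\|\nabla z(t)-\nabla z(s)\|_{L^2}^2\,ds$ into the energy. This is where $\mathcal M'\le0$ genuinely enters. It also requires control of $\mathcal M''$ (e.g.\ $\mathcal M''\ge0$, or $\mathcal M''\in L^1$ with a sup-in-time Gr\"onwall) and of $\nabla d$, which the hypotheses do not provide.

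Your fallback for $\eta A_{tt}zz_t$ also fails. Moving the time derivative off $A_{tt}$ lands it on $\eta zz_t$ and produces $\int_0^t\!\!\int_\Omega\eta A_t\,z\,z_{tt}$, and $z_{tt}$ is not controlled by the energy. Eliminating $z_{tt}$ through the equation reintroduces $A_{tt}z$ and terms such as $\int_\Omega a^{-1}\eta c^2A_t\,z\,\Delta z$. Integrating that by parts in space produces $z\,\nabla A_t\cdot\nabla z$, which would need $z\in L^\infty$, and $H^1$ does not give this for $n=2,3$. So the claimed reduction to $A_t\in L^\infty(0,T;L^3)$ is not valid. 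The clean fix is to include, say, $u^{(i)}_{tt}\in L^\infty(0,T;L^3(\Omega))$ in ``sufficiently regular''. That gives $\|\eta A_{tt}z\|_{L^2}\le C\|A_{tt}\|_{L^3}\|\nabla z\|_{L^2}$ for $n\le3$, which is in effect what the paper assumes when it asserts $A_{tt}\in L^\infty$.
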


\begin{proof}
For a nonhomogeneous boundary, we can introduce lifting functions
$w^{(i)}$ satisfying
\[
w^{(i)} = g^{(i)}
\quad \text{on } \partial\Omega\times(0,T).
\]
Define
\[
v^{(i)} = u^{(i)} - w^{(i)}.
\]
Then
\[
v^{(i)}|_{\partial\Omega}=0.
\]
The functions $v^{(i)}$ satisfy
\begin{align}
v_{tt}^{(i)}
-c^2(x)\Delta v^{(i)}
-d(x,t)\int_0^t
\mathcal M(t-s)\Delta v_t^{(i)}(s)\,ds
=
\eta(x,t)\big((v^{(i)}+w^{(i)})^2\big)_{tt}
+F^{(i)},
\label{eq:v-eqn}
\end{align}
where
\[
F^{(i)}
=
-w_{tt}^{(i)}
+c^2(x)\Delta w^{(i)}
+d(x,t)\int_0^t
\mathcal M(t-s)\Delta w_t^{(i)}(s)\,ds.
\]
Define the difference variable
\[
y = v^{(1)} - v^{(2)}
\]
Subtracting \eqref{eq:v-eqn} satisfied by  $v^{(1)}$ and $v^{(2)}$, we get
\begin{align}
y_{tt}
-c^2(x)\Delta y
-d(x,t)\int_0^t
\mathcal M(t-s)\Delta y_t(s)\,ds
=
N(y)+\big(F^{(1)}-F^{(2)}\big),
\label{eq:y}
\end{align}
where
\[
N(y)
=
\eta(x,t)
\partial_{tt}
\left(
(v^{(1)}+w^{(1)})^2
-
(v^{(2)}+w^{(2)})^2
\right).
\]
$A = u^{(1)} + u^{(2)}$ implies 
\[
A = v^{(1)} + v^{(2)} + w^{(1)} + w^{(2)}
\]
and
\[
B
=
w^{(1)}-w^{(2)}.
\]
Since
\[
y=v^{(1)}-v^{(2)},
\]
we obtain
\[
(v^{(1)}+w^{(1)})^2
-
(v^{(2)}+w^{(2)})^2
=
A(y+B).
\]
Hence
\[
N(y)
=
\eta\partial_{tt}(A(y+B)).
\]
Define the energy
\begin{align}
E_y(t)
&=
\frac12
\int_\Omega
|y_t|^2
+
c^2(x)|\nabla y|^2
\,dx +
\frac12
\int_0^t
\mathcal M(t-s)
\|\nabla y_t(s)\|_{L^2(\Omega)}^2\,ds.
\label{eq:energy}
\end{align}
We also define the modified energy as follows:
\begin{align}
\tilde{E}_y(t)
&=
\frac12
\int_\Omega (1-\eta A)
|y_t|^2
+
c^2(x)|\nabla y|^2
\,dx +
\frac12
\int_0^t
\mathcal M(t-s)
\|\nabla y_t(s)\|_{L^2(\Omega)}^2\,ds.
\label{eq:energy2}
\end{align}
Since 
\[
\|\eta A\|_{L^{\infty}} < 1,
\]
there exist positive constants $c_1,c_2$ such that
\[
c_1 E_y(t) \leq \tilde{E}_y(t) \leq c_2 E_y(t),
\]
where
\[
E_y(t) \leq C(\|y_t\|_{L^2(\Omega)}^2
+
\|\nabla y\|_{L^2(\Omega)}^2)
\]
Differentiating \eqref{eq:energy} with respect to time, we obtain
\begin{align}\label{eq:energy_derivative}
\frac{d}{dt}E_y(t)
&=
\int_\Omega y_{tt}y_t\,dx
+
\int_\Omega c^2(x)\nabla y\cdot \nabla y_t\,dx
+
\frac12 \mathcal M(0)\|\nabla y_t(t)\|^2
+
\frac12
\int_0^t
\mathcal M'(t-s)
\|\nabla y_t(s)\|^2\,ds.
\end{align}
Multiplying \eqref{eq:y} by $y_t$ and integrating over $\Omega$, we obtain
\begin{align}
\int_\Omega y_{tt}y_t\,dx
-
\int_\Omega c^2(x)\Delta y\,y_t\,dx
-
\int_\Omega d(x,t)
\left(
\int_0^t
\mathcal M(t-s)\Delta y_t(s)\,ds
\right)&y_t\,dx
=
\int_\Omega N(y)y_t\,dx
\nonumber\\
&+
\int_\Omega (F^{(1)}-F^{(2)})y_t\,dx.
\label{eq:A1}
\end{align}
%
Now,
\[
\int_\Omega y_{tt}y_t\,dx
=
\frac12\frac{d}{dt}\|y_t\|_{L^2(\Omega)}^2.
\]
Further,
\begin{align}
-\int_\Omega c^2(x)\Delta y\,y_t\,dx
&=
\int_\Omega c^2(x)\nabla y\cdot \nabla y_t\,dx
+
\int_\Omega \nabla(c^2(x))\cdot \nabla y\,y_t\,dx
\nonumber\\
&=
\frac12\frac{d}{dt}
\int_\Omega c^2(x)|\nabla y|^2\,dx
+
\int_\Omega \nabla(c^2(x))\cdot \nabla y\,y_t\,dx.
\label{eq:A2}
\end{align}
Since $d \in W^{1,\infty}(0,T; L^{\infty}(\Omega))$, there exists $d_1 > 0$ such that $d(x,t) \le d_1$ for all $(x,t) \in \Omega \times (0,T)$
\begin{align}
-
d(x,t)\int_\Omega
\left(
\int_0^t
\mathcal M(t-s)\Delta y_t(s)\,ds
\right)y_t\,dx
&=
d_0\mathcal M(0)\|\nabla y_t\|^2
+
d_0\int_0^t
\big(-\mathcal M'(t-s)\big)
\|\nabla y_t(s)\|^2\,ds
\nonumber\\
&\quad
-
\frac12\frac{d}{dt}
\int_0^t
\mathcal M(t-s)
\|\nabla y_t(s)\|_{L^2(\Omega)}^2\,ds.
\label{eq:A3}
\end{align}
\begin{align}
-
\int_\Omega d(x,t)
\left(
\int_0^t
\mathcal M(t-s)\Delta y_t(s)\,ds
\right)y_t\,dx
&\geq -
\int_\Omega d_1
\left(
\int_0^t
\mathcal M(t-s)\Delta y_t(s)\,ds
\right)y_t\,dx\\
&= 
-
d_1\int_0^t
\mathcal M(t-s)\left(\int_\Omega \Delta y_t(s) y_t \,dx\right)\,ds\\
&=
d_1\int_0^t
\big(\mathcal M(t-s)\big)
\|\nabla y_t(s)\|^2\,ds
\nonumber\\
\label{eq:A3}
\end{align}
Substituting $\int_\Omega y_{tt}y_t\,dx$ term from \eqref{eq:energy_derivative} and using \eqref{eq:A2} and \eqref{eq:A3} into \eqref{eq:A1}, we obtain
\begin{align}\label{eq: derivative_identity}
\frac{d}{dt}E_y(t)
+
d_1 \int_0^t\mathcal M(t-s)\|\nabla y_t(s)\|^2
-
&\frac{1}{2}\int_0^t
\mathcal M'(t-s)
\|\nabla y_t(s)\|^2\,ds
\leq
\int_\Omega N(y)y_t\,dx\nonumber\\
&+
\int_\Omega (F^{(1)}-F^{(2)})y_t\,dx
-
\int_\Omega
\nabla(c^2(x))\cdot \nabla y\,y_t\,dx + \frac{1}{2}\mathcal{M}(0)\|\nabla y_t(t)\|^2\,
\end{align}
where
\[
E_y(t)
=
\frac12
\int_\Omega
|y_t|^2
+
c^2(x)|\nabla y|^2\,dx
+
\frac12
\int_0^t
\mathcal M(t-s)
\|\nabla y_t(s)\|_{L^2(\Omega)}^2\,ds.
\]
Using the fact $\mathcal{M} \ge 0$ and $\mathcal{M}' \le 0$, we further obtain
\begin{align}\label{eq: derivative_identity2}
\frac{d}{dt}E_y(t)
& \leq 
\int_\Omega N(y)y_t\,dx
+
\int_\Omega (F^{(1)}-F^{(2)})y_t\,dx
-
\int_\Omega
\nabla(c^2(x))\cdot \nabla y\,y_t\,dx + \frac{1}{2}\mathcal{M}(0)\|\nabla y_t(t)\|^2\,
\end{align}
The last two terms can be  estimated as follows:\[\left|\int_\Omega\nabla(c^2(x))\cdot \nabla y\,y_t\,dx\right|\le\|\nabla(c^2)\|_{L^\infty(\Omega)}\|\nabla y\|_{L^2(\Omega)}\|y_t\|_{L^2(\Omega)}.\]
\[\frac{1}{2}\mathcal{M}(0)\|\nabla y_t(t)\|^2  \leq C \]
Applying Young's inequality, for any $\varepsilon>0$,\[\left|\int_\Omega\nabla(c^2(x))\cdot \nabla y\,y_t\,dx\right|\le\varepsilon \|\nabla y\|_{L^2(\Omega)}^2+C_\varepsilon \|y_t\|_{L^2(\Omega)}^2.\]
Hence, this term is controlled by the energy functional $E_y(t)$.
Next, we estimate the nonlinear term. Expanding
\[
N(y)
=
\eta
\Big(
A(y+B)_{tt}
+
2A_t(y+B)_t
+
A_{tt}(y+B)
\Big),
\]
we obtain
\[
\int_\Omega N(y)y_t\,dx
=
I_1+I_2+I_3.
\]
The principal term is
\[
I_1
=
\int_\Omega
\eta A y_{tt}y_t\,dx
+
\int_\Omega
\eta A B_{tt}y_t\,dx.
\]
Using
\[
y_{tt}y_t
=
\frac12\partial_t(|y_t|^2),
\]
we obtain
\begin{align}
\int_\Omega
\eta A y_{tt}y_t\,dx
&=
\frac12
\frac{d}{dt}
\int_\Omega
\eta A |y_t|^2\,dx
\nonumber\\
&\quad
-
\frac12
\int_\Omega
(\eta A)_t |y_t|^2\,dx.
\end{align}
Since
\[
A=u^{(1)}+u^{(2)},
\]
and
\[
u^{(i)}\in L^\infty(0,T;H^2(\Omega)),
\]
Sobolev embedding implies
\[
A,\ A_t,\ A_{tt}\in L^\infty(\Omega\times(0,T)).
\]
Therefore,
\[
\left|
\int_\Omega
(\eta A)_t |y_t|^2\,dx
\right|
\le
C\|y_t\|^2.
\]
The remaining terms are estimated using Hölder's, Poincaré's,
and Young's inequalities:
\begin{align}
\left|
\int_\Omega N(y)y_t\,dx
\right|
&\le
\frac12
\frac{d}{dt}
\int_\Omega
\eta A |y_t|^2\,dx
\nonumber\\
&\quad
+
C
\left(
\|y_t\|^2
+
\|\nabla y\|^2
\right)
\nonumber\\
&\quad
+
C
\left(
\|B\|^2
+
\|B_t\|^2
+
\|B_{tt}\|^2
\right).
\label{eq:nonlinear}
\end{align}
Substituting \eqref{eq:nonlinear} into \eqref{eq: derivative_identity},
we obtain
\begin{align}
\frac{d}{dt}E_y(t)
&\le
\frac12
\frac{d}{dt}
\int_\Omega
\eta A |y_t|^2\,dx
+
CE_y(t)
\nonumber\\
&\quad
+
C
\left(
\|B\|^2
+
\|B_t\|^2
+
\|B_{tt}\|^2
\right)
+
\left|
\int_\Omega
(F^{(1)}-F^{(2)})y_t\,dx
\right|.
\end{align}
Moving the derivative term to the left-hand side gives
\begin{align}
\frac{d}{dt}\widetilde E_y(t) +
&\le C\widetilde E_y(t)
+
C
\left(
\|B\|^2
+
\|B_t\|^2
+
\|B_{tt}\|^2
\right)
+
\left|
\int_\Omega
(F^{(1)}-F^{(2)})y_t\,dx
\right|.
\label{eq:finalenergy}
\end{align}
Finally, using Young's inequality,
\[
\left|
\int_\Omega
(F^{(1)}-F^{(2)})y_t\,dx
\right|
\le
\varepsilon\|y_t\|^2
+
C_\varepsilon
\|F^{(1)}-F^{(2)}\|^2.
\]
Since the lifting operator is continuous,
\[
\|B\|
+
\|B_t\|
+
\|B_{tt}\|
+
\|F^{(1)}-F^{(2)}\|
\le
C
\|g^{(1)}-g^{(2)}\|_{\partial \Omega},
\]
we obtain
\[
\frac{d}{dt}\widetilde E_y(t)
\le
C\widetilde E_y(t)
+
C
\|g^{(1)}-g^{(2)}\|^2_{\partial\Omega}.
\]
Applying Grönwall's inequality yields
\[
\widetilde E_y(t)
\le
Ce^{Ct}
\left(
\widetilde E_y(0)
+
\|g^{(1)}-g^{(2)}\|^2_{\partial\Omega}
\right).
\]
For the forcing term,
\begin{align}
\left|
\int_\Omega
(F^{(1)}-F^{(2)})y_t\,dx
\right| &\le
\|F^{(1)}-F^{(2)}\|
\|y_t\| \\
&\le
\varepsilon\|y_t\|^2
+
C_\varepsilon
\|F^{(1)}-F^{(2)}\|^2.
\end{align}
Using the fact that the lifting operator is continuous, we get
\[
\|F^{(1)}-F^{(2)}\|
\le
C
\|g^{(1)}-g^{(2)}\|_{\partial\Omega}.
\]
Therefore,
\[
E_y'(t)
\le
CE_y(t)
+
C
\|g^{(1)}-g^{(2)}\|_{\partial\Omega}^2.
\]
Applying Grönwall's inequality yields
\[
E_y(t)
\le
Ce^{Ct}
\left(
E_y(0)
+
\|g^{(1)}-g^{(2)}\|_{\partial\Omega}^2
\right).
\]
Note
\[
E_y(0)
\le C
(\|u_0^{(1)}-u_0^{(2)}\|_{H^1(\Omega)}^2
+
\|u_1^{(1)}-u_1^{(2)}\|_{L^2(\Omega)}^2).
\]
Therefore,
\[
E_y(t)
\le
Ce^{Ct}
\Big(
\|u_0^{(1)}-u_0^{(2)}\|_{H^1(\Omega)}^2
+
\|u_1^{(1)}-u_1^{(2)}\|_{L^2(\Omega)}^2
+
\|g^{(1)}-g^{(2)}\|_{\partial \Omega}^2
\Big).
\]
Since
\[
E_y(t)
\sim
\|y_t(t)\|_{L^2(\Omega)}^2
+
\|y(t)\|_{H^1(\Omega)}^2,
\]
We conclude that
\begin{align}
\|u_t^{(1)}-u_t^{(2)}\|_{L^2(\Omega)}^2
+
\|u^{(1)}-u^{(2)}\|_{H^1(\Omega)}^2 
\le
Ce^{Ct}
\Big(
\|u_0^{(1)}-u_0^{(2)}\|_{H^1(\Omega)}^2
+
\|u_1^{(1)}-u_1^{(2)}\|_{L^2(\Omega)}^2
+
\|g^{(1)}-g^{(2)}\|_{\partial \Omega}^2
\Big).
\end{align}
which is equivalent to 
\begin{align}\label{eq: sqrt}
\left(\|u_t^{(1)}-u_t^{(2)}\|_{L^2(\Omega)}^2
+
\|u^{(1)}-u^{(2)}\|_{H^1(\Omega)}^2\right)^{\frac{1}{2}}
\le
Ce^{Ct}
\Big(
\|u_0^{(1)}-u_0^{(2)}\|_{H^1(\Omega)}^2
+
\|u_1^{(1)}-u_1^{(2)}\|_{L^2(\Omega)}^2
+
\|g^{(1)}-g^{(2)}\|_{\partial \Omega}^2
\Big)^{\frac{1}{2}}
\end{align}
Next, using the inequality, $a+b \le \sqrt{2}(a^2 + b^2)^{1/2} $, we obtain
\begin{align}\label{eq: ab}
\|u_t^{(1)}-u_t^{(2)}\|_{L^2(\Omega)}
+
\|u^{(1)}-u^{(2)}\|_{H^1(\Omega)} \le \sqrt{2}\left(\|u_t^{(1)}-u_t^{(2)}\|_{L^2(\Omega)}^2
+
\|u^{(1)}-u^{(2)}\|_{H^1(\Omega)}^2\right)^{1/2} 
\end{align}
and using  $(x^2 + y^2 +z^2)^{1/2} \leq x+y+z $ for $x,y,z \geq 0$ we get
\begin{align}\label{eq: xyz}
&\left(\|u_0^{(1)}-u_0^{(2)}\|_{H^1(\Omega)}^2
+
\|u_1^{(1)}-u_1^{(2)}\|_{L^2(\Omega)}^2
+
\|g^{(1)}-g^{(2)}\|_{\partial \Omega}^2\right)^{1/2}\\
&\le \left(\|u_0^{(1)}-u_0^{(2)}\|_{H^1(\Omega)}
+
\|u_1^{(1)}-u_1^{(2)}\|_{L^2(\Omega)}
+
\|g^{(1)}-g^{(2)}\|_{\partial \Omega}^2\right) \end{align}
Combining \eqref{eq: sqrt}, \eqref{eq: ab} and \eqref{eq: xyz}
gives us the required result, i.e.,
\begin{equation}
\begin{aligned}
\|u_t^{(1)}-u_t^{(2)}\|_{L^2(\Omega)}
+
\|u^{(1)}-u^{(2)}\|_{H^1(\Omega)} \le & Ce^{Ct}\left(\|u_0^{(1)}-u_0^{(2)}\|_{H^1(\Omega)}
+
\|u_1^{(1)}-u_1^{(2)}\|_{L^2(\Omega)}
+
\|g^{(1)}-g^{(2)}\|_{\partial \Omega}^2\right)         
\end{aligned}
\end{equation}
\end{proof}


\section{Theory of the inverse problem}\label{sec: Inverse problem theory}

Let $\Omega$ be a bounded domain with a smooth boundary $\partial \Omega$ such that at each point $y \in \partial \Omega$, a detector is placed that measures $g(x,t)$ for any $t > 0$. 
We define a forward map 
\begin{equation}
    \begin{aligned}
\mathcal{W}:H^1(\Omega) \rightarrow L^2[(0,T);H^{1/2}(\partial\Omega)],~\mathcal{W} : f \mapsto g = u |_{\partial \Omega \times (0,\infty)}
    \end{aligned}
\end{equation}
where $u$ solves the \eqref{eq: Main PDE}.
\begin{equation}\label{eq: PDE for inverse}
\begin{aligned}
 &u_{tt} - c^2(x) \Delta u - d(x,t) \mathcal{M}* \Delta u_t = \eta(x,t)\big( u^2 \big)_{tt}, \\
& u(x,0)=f(x), \quad u_t(x,0)=0.
\end{aligned}
\end{equation}
We assume that $f$ is compactly supported in $\Omega$. For the forthcoming discussions, we define the following spaces:
\[
X := H^1(\Omega),
\]
\[
Z := C([0,T];H^1(\Omega)) \cap C^1([0,T];L^2(\Omega)) \cap L^2(0,T;H^2(\Omega)),
\]
\[
Y := L^2(0,T;L^2(\Omega)).
\]
We next state the following result on observability estimates of an associated linear equation, whose proof follows from Theorem 1 in \cite{Claret2024}. 
\begin{prop}\label{prop:observability}
For the linear operator
\[
\mathcal L_{u_0}v :=
v_{tt}-c^2\Delta v-d\mathcal M * \Delta v_t - 2\eta (u_0 v)_{tt}
\]
we have the following observability estimate:
\begin{equation}\label{obs2}
\|v(\cdot,0)\|_{X}
\leq
C \|\mathcal L_{u_0}v\|_Y
\end{equation}
for all $v \in Z$ such that
\[
v|_{\partial\Omega\times(0,T)}=0, \quad v_t(\cdot,0)=0.
\]
\end{prop}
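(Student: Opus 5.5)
The plan is to prove \eqref{obs2} by an energy argument run backward from $t=0$, following the mechanism of Theorem~1 in \cite{Claret2024}. I would first put $\mathcal L_{u_0}$ in the same quasilinear form used for the forward problem. Setting $h:=\mathcal L_{u_0}v\in Y$ and expanding $(u_0v)_{tt}=u_0v_{tt}+2u_{0,t}v_t+u_{0,tt}v$, where $u_0$ is read as the background state, one gets
\[
(1-2\eta u_0)v_{tt}-c^2\Delta v-d\,\mathcal M*\Delta v_t = h+4\eta u_{0,t}v_t+2\eta u_{0,tt}v .
\]
The leading coefficient $1-2\eta u_0$ is kept uniformly positive by the non-degeneracy condition \eqref{eq: nondeg}, together with the $L^\infty$ bounds on $u_0,u_{0,t},u_{0,tt}$ that were used in the proof of Theorem~\ref{th:stab}. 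This makes the perturbation terms lower order.

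Next I would use the modified energy
\[
\widetilde E(t)=\tfrac12\int_\Omega(1-2\eta u_0)|v_t|^2+c^2|\nabla v|^2\,dx+\tfrac12\int_0^t\mathcal M(t-s)\|\nabla v_t(s)\|^2ds .
\]
This is the energy of Theorem~\ref{th:linearpde} and Theorem~\ref{th:stab}, and it is equivalent to $\|v_t\|_{L^2}^2+\|\nabla v\|_{L^2}^2$ plus the memory part. Multiplying by $v_t$ and integrating by parts uses $v|_{\partial\Omega}=0$, which kills the boundary terms. The hypotheses $\mathcal M\ge0$ and $\mathcal M'\le0$ make the memory contribution dissipative. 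As in the earlier proofs, this yields
\[
|\widetilde E'(t)|\le C\widetilde E(t)+C\|h(t)\|_{L^2}^2 .
\]
Gr\"onwall is then applied backward on $[0,t]$, giving
\[
\widetilde E(0)\le e^{Ct}\widetilde E(t)+C\int_0^t\|h\|^2 .
\]
Since $v_t(\cdot,0)=0$, $\widetilde E(0)$ is comparable to $\|\nabla v(\cdot,0)\|^2$. Poincar\'e's inequality, which holds because of the Dirichlet condition, then controls $\|v(\cdot,0)\|_{X}$.

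\textbf{Main obstacle.} The backward Gr\"onwall estimate still carries the term $\widetilde E(t)$, which is not bounded by $\|h\|_Y$ alone. Some mechanism must remove the energy at a later time. I would try two routes. The first is to average over $t\in(T/2,T)$ and combine this with an observability or unique-continuation estimate for the damped wave with zero Dirichlet trace; this is where the Carleman estimate of \cite{Claret2024} is essential, under its geometric and time conditions, with $T$ large relative to the sound-speed geometry. The second, if that is unavailable, is a compactness--uniqueness argument. One first proves $\|v(0)\|_X\le C(\|h\|_Y+\|v\|_{L^2(0,T;L^2)})$. The lower-order term is then removed by contradiction: a normalized sequence with $h_k\to0$ would converge, via Aubin--Lions as in Theorem~\ref{th:linearpde}, to a nontrivial solution of $\mathcal L_{u_0}v=0$ with $v_t(0)=0$ and $v|_{\partial\Omega}=0$. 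Excluding such a solution is itself a unique continuation statement that I would take from \cite{Claret2024}.

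Making the memory term compatible with the Carleman weights is the delicate point. I expect to treat it as a Volterra perturbation of order $\int_0^t\|\nabla v_t\|^2$, absorbed using the $L^1$ smallness of $\mathcal M$ on short intervals.
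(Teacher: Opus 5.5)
\textbf{There is a genuine gap, and it cannot be closed: the estimate \eqref{obs2} as stated is false.} The failure is at the step you call the main obstacle. Nothing on the right-hand side observes the solution, and with $v|_{\partial\Omega\times(0,T)}=0$ there are nontrivial solutions of $\mathcal L_{u_0}v=0$.

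Here is a concrete counterexample. Take the background $u_0\equiv0$, which is the state generated by $f_0=0$, so $1-2\eta u_0\ge\kappa$ holds with $\kappa=1$. Take $c,d$ positive constants and let $\phi_1$ be the first Dirichlet eigenfunction, $-\Delta\phi_1=\mu_1\phi_1$. Set $v(x,t)=a(t)\phi_1(x)$, where $a\in C^2[0,T]$ solves the scalar Volterra problem $a''+c^2\mu_1a+d\mu_1\,\mathcal M*a'=0$, $a(0)=1$, $a'(0)=0$. For $\mathcal M\equiv0$ this is simply $a(t)=\cos(c\sqrt{\mu_1}\,t)$. Then $v\in Z$, $v|_{\partial\Omega\times(0,T)}=0$, $v_t(\cdot,0)=0$ and $\mathcal L_{u_0}v=0$, yet $\|v(\cdot,0)\|_X=\|\phi_1\|_{H^1(\Omega)}>0$. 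This works for every $T>0$ and every domain. By forward solvability of the homogeneous Dirichlet problem for $\mathcal L_{u_0}$, the same happens for essentially any admissible background and any $\phi\in H^2(\Omega)\cap H^1_0(\Omega)$.

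Consequences for your two routes:
\begin{itemize}
\item In the first route, no Carleman or observability estimate can absorb $\widetilde E(t)$.
\item In the compactness--uniqueness route, the nontrivial limit you want to exclude really exists. Unique continuation rules it out only when extra Cauchy data are available.
\end{itemize}
The paper's own justification is only a pointer to Theorem~1 of \cite{Claret2024}, and that does not repair this either.

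\textbf{What is missing is an observation term.} Observability inequalities, including the cited one, bound the initial data by a boundary measurement of the solution. For Dirichlet solutions the natural provable form is
\[
\|v(\cdot,0)\|_X\le C\big(\|\partial_\nu v\|_{L^2(\Gamma\times(0,T))}+\|\mathcal L_{u_0}v\|_Y\big),
\]
for a suitable $\Gamma\subset\partial\Omega$ and $T$ large, with the memory and $\eta$-terms treated as lower-order perturbations. With that correction, the local uniqueness argument that uses \eqref{obs2} would also need the Neumann traces of $u_1,u_2$ to coincide. Equality of the Dirichlet data alone does not give this on the bounded domain.

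\textbf{A separate technical problem.} Your two-sided inequality $|\widetilde E'|\le C\widetilde E+C\|h\|^2$ is not justified. Differentiating the memory part of $\widetilde E$ produces $\tfrac12\mathcal M(0)\|\nabla v_t(t)\|^2$. Testing $d\,\mathcal M*\Delta v_t$ against $v_t$ produces a term at the level of $\|\nabla v_t(t)\|$. Neither is controlled pointwise by $\widetilde E(t)$, so backward Gr\"onwall is not available in that form. You could fix this by rewriting $\mathcal M*\Delta v_t=\mathcal M(0)\Delta v-\mathcal M(t)\Delta v(\cdot,0)+\mathcal M'*\Delta v$ (for $\mathcal M\in W^{1,1}$) and working with an energy at the $\nabla v$ level. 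That repairs the technical point but not the main one.
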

We now state and prove a local uniqueness result for the recovery of $f$ from $\mathcal{W}(f)$. 

\begin{thm}
Let $\Omega \subset \mathbb{R}^n$ be a bounded domain with smooth boundary and let $T>0$. Consider
\begin{equation}
\begin{aligned}
u_{tt}-c^2(x)\Delta u-d(x,t)\mathcal M * \Delta u_t
=
\eta(x,t)(u^2)_{tt},\\
u(x,0)=f(x), \quad u_t(x,0)=0.
\end{aligned}
\end{equation}
Fix $f_0 \in X$ and let $u_0 \in Z$ be the corresponding solution. Assume $1 - 2\eta(x,t)u_0(x,t) \geq \kappa > 0$.
Then there exists $\varepsilon>0$ such that if
\[
\|f_1-f_0\|_{X} + \|f_2-f_0\|_{X} < \varepsilon
\]
and
\[
\mathcal F(f_1)=\mathcal F(f_2),
\]
then
\[
f_1=f_2.
\]
\end{thm}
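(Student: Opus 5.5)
The plan is to argue by linearization around the reference solution $u_0$, using the observability estimate of Proposition~\ref{prop:observability} for the linearized operator $\mathcal L_{u_0}$. Here $\mathcal F$ is read as the forward map $\mathcal W$. Let $u_1,u_2\in Z$ be the solutions of the nonlinear problem with initial data $f_1,f_2$; they exist for $\varepsilon$ small by Theorem~\ref{th:NDVW}. Set $w=u_1-u_2$. Since $\mathcal F(f_1)=\mathcal F(f_2)$, we have $w|_{\partial\Omega\times(0,T)}=0$. Moreover $w_t(\cdot,0)=0$ and $w(\cdot,0)=f_1-f_2$. Subtracting the two equations and using $u_1^2-u_2^2=(u_1+u_2)w$ gives
\[
w_{tt}-c^2\Delta w-d\mathcal M*\Delta w_t-\eta\big((u_1+u_2)w\big)_{tt}=0 .
\]
Rewriting $u_1+u_2=2u_0+(u_1-u_0)+(u_2-u_0)$, this becomes
\[
\mathcal L_{u_0}w=\eta\big((r_1+r_2)w\big)_{tt},\qquad r_i:=u_i-u_0 .
\]

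Next I apply Proposition~\ref{prop:observability} to $w$. It is admissible because $w\in Z$, it vanishes on the lateral boundary, and $w_t(\cdot,0)=0$. This yields
\[
\|f_1-f_2\|_X=\|w(\cdot,0)\|_X\le C\,\|\eta((r_1+r_2)w)_{tt}\|_Y .
\]
By the stability Theorem~\ref{th:stab}, with equal boundary data for $u_i$ versus $u_0$ replaced by the data difference, or more directly by Lipschitz continuity of the fixed-point map in the proof of Theorem~\ref{th:NDVW}, we get $\|r_i\|_Z\le C\|f_i-f_0\|_X\le C\varepsilon$. The hypothesis $1-2\eta u_0\ge\kappa$ ensures that the nondegeneracy condition \eqref{eq: nondeg} persists for $u_i$, and for the averaged coefficient $1-\eta(u_1+u_2)$, once $\varepsilon$ is small. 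This keeps the constant $C$ in the observability estimate uniform.

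The main obstacle is bounding the right-hand side by $o(1)\|w(\cdot,0)\|_X$. The expansion
\[
((r_1+r_2)w)_{tt}=(r_1+r_2)w_{tt}+2(r_1+r_2)_tw_t+(r_1+r_2)_{tt}w
\]
involves $w_{tt}$ and $r_{tt}$, which are not controlled in $Y$ by the $Z$-norm. My first attempt is to use the equation itself. From $(1-\eta(u_1+u_2))w_{tt}=c^2\Delta w+d\mathcal M*\Delta w_t+2\eta(u_1+u_2)_tw_t+\eta(u_1+u_2)_{tt}w$, solving for $w_{tt}$ through the nondegenerate coefficient expresses it in terms of $\Delta w$ and $\mathcal M*\Delta w_t$, which lie in $Y$ when $w\in L^2(0,T;H^2)$ with memory regularity. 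Similarly, $r_{i,tt}$ is expressed through second spatial derivatives. With the embeddings $H^2\hookrightarrow L^\infty$ for $n\le 3$, the right-hand side is then bounded by $C\varepsilon\,\|w\|_{Z'}$ in a slightly stronger norm $Z'$.

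One then needs the energy estimate of Theorem~\ref{th:stab}, applied with zero boundary data, in the form
\[
\|w\|_{Z'}\le C\|w(\cdot,0)\|_X ,
\]
possibly after strengthening $X$ to include $H^2$-regularity of the data, since $f$ is compactly supported. If that is granted, we obtain $\|f_1-f_2\|_X\le C\varepsilon\|f_1-f_2\|_X$. Choosing $\varepsilon$ with $C\varepsilon<1$ forces $f_1=f_2$. The delicate point I expect to be hardest is exactly this closing of norms, namely the higher-order regularity of the linear problem needed to control the time derivatives. If it fails in $Z$, I would fall back to assuming the solutions lie in the class of Theorem~\ref{th:stab} ($H^2$ in space, $H^1$ for $u_t$) and work in those norms throughout.
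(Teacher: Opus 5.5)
Your argument is the paper's proof, step for step. Writing $u_j=u_0+z_j$ (your $r_j$), you get $\mathcal L_{u_0}w=\eta\big((z_1+z_2)w\big)_{tt}$, apply Proposition~\ref{prop:observability}, and try to absorb the right-hand side using $\|z_j\|_Z\le C\|f_j-f_0\|_X$. The paper closes this in one line. It asserts by ``standard product estimates'' that $\|\eta(z_1^2-z_2^2)_{tt}\|_Y\le C\|z_1+z_2\|_Z\|w\|_Z$, and, citing Theorem~\ref{th:stab}, that $\|w\|_Z\le C\|w(\cdot,0)\|_X$. You are right to distrust exactly this step. $Z$ controls neither $w_{tt}$, $z_{tt}$ nor the product $z_tw_t$ in $Y$, and Theorem~\ref{th:stab} only gives $H^1\times L^2$ energy bounds. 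So the paper supplies no idea that you are missing. But your proof is still incomplete at its last step, and the repair you sketch cannot close it.

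The obstruction is a loss of two derivatives. For a bounded kernel (the paper uses $\mathcal M(0)<\infty$ in its energy estimates),
\[
\mathcal M*\Delta w_t=\mathcal M(0)\Delta w-\mathcal M(t)\Delta w(\cdot,0)+\mathcal M'*\Delta w .
\]
So the memory term gives no smoothing and the problem stays hyperbolic. Consequently $\|w_{tt}\|_Y$, which is essentially $\|\Delta w\|_Y$, cannot be bounded by $\|w(\cdot,0)\|_{H^1}$, and the estimate $\|w\|_{Z'}\le C\|w(\cdot,0)\|_X$ you need is false with $X=H^1$. If you instead raise the data to $H^2$, your chain yields only $\|f_1-f_2\|_{H^1}\le C\varepsilon\|f_1-f_2\|_{H^2}$. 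That does not force $f_1=f_2$, because Proposition~\ref{prop:observability} controls only the $H^1$ norm on the left. The root cause is that $\eta(z_1+z_2)w_{tt}$ perturbs the \emph{principal} part, so it cannot be treated as a source against an $H^1$-level observability estimate.

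The missing idea is to keep this term in the operator. With $a:=\tfrac12(u_1+u_2)$ one has exactly $\mathcal L_a w=0$, $w|_{\partial\Omega\times(0,T)}=0$, $w_t(\cdot,0)=0$. Observability for $\mathcal L_a$ then gives $\|f_1-f_2\|_X\le C_a\cdot 0$, with no absorption step at all. Smallness is used only to make $a$ an admissible background, e.g.\ $1-2\eta a=1-2\eta u_0-\eta(z_1+z_2)\ge\kappa/2$. This requires two things:
\begin{itemize}
\item $L^\infty$-smallness of $z_j$, hence a data norm stronger than $H^1$ when $n\ge2$;
\item reading Proposition~\ref{prop:observability} as holding for every such nondegenerate, sufficiently regular background, not only for the fixed $u_0$.
\end{itemize}
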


\begin{proof}
Let $u_1,u_2 \in Z$ be the solutions corresponding to $f_1,f_2$. Define
\[
w := u_1 - u_2.
\]
Then
\[
w(x,0)=f_1-f_2, \quad w_t(x,0)=0,
\]
and
\[
w|_{\partial\Omega\times(0,T)}=0.
\]
Subtracting the equations gives
\[
w_{tt}-c^2\Delta w-d\mathcal M * \Delta w_t
=
\eta\big[(u_1^2)_{tt}-(u_2^2)_{tt}\big].
\]
Let
\[
u_j = u_0 + z_j, \quad j=1,2,
\]
such that
\[
w = z_1 - z_2.
\]
Then, we have
\[
u_1^2 - u_2^2 = 2u_0 w + z_1^2 - z_2^2,
\]
and hence
\[
(u_1^2)_{tt}-(u_2^2)_{tt}
=
2(u_0 w)_{tt} + (z_1^2 - z_2^2)_{tt}.
\]
Thus,
\[
\mathcal L_{u_0} w
=
\eta (z_1^2 - z_2^2)_{tt}.
\]
Applying the observability estimate \eqref{obs2} to $w$, we obtain
\[
\|w(\cdot,0)\|_{X}
\leq
C \|\mathcal L_{u_0} w\|_{Y}.
\]
Hence,
\[
\|w(\cdot,0)\|_{X}
\leq
C \|\eta (z_1^2 - z_2^2)_{tt}\|_{Y}.
\]
Since
\[
z_1^2 - z_2^2 = w(z_1 + z_2),
\]
standard product estimates in Sobolev spaces yield
\[
\|\eta (z_1^2 - z_2^2)_{tt}\|_{Y}
\leq
C \|z_1 + z_2\|_{Z} \|w\|_{Z}.
\]
By Theorem \ref{th:stab},
\[
\|z_j\|_{Z} \leq C \|f_j - f_0\|_{X}, \quad j=1,2,
\]
and
\[
\|w\|_{Z} \leq C \|w(\cdot,0)\|_{X}.
\]
Therefore, we have
\[
\|w(\cdot,0)\|_{X}
\leq
C\big(\|f_1-f_0\|_{X} + \|f_2-f_0\|_{X}\big)
\|w(\cdot,0)\|_{X}.
\]
For sufficiently small $\varepsilon$, we have
\[
C\big(\|f_1-f_0\|_{X} + \|f_2-f_0\|_{X}\big) < 1,
\]
which implies
\[
\|w(\cdot,0)\|_{X} = 0.
\]
Thus $f_1 = f_2$, proving local uniqueness.
\end{proof}
We now show the existence of the reconstruction of $f$ from the boundary measurements $g$ of the linear part of the inverse problem having some restrictions on the coefficients.

\begin{thm}
Let $\Omega \subset \mathbb{R}^n$ be a bounded domain with a smooth boundary $\partial \Omega$ such that at each point $y \in \partial \Omega$, a point detector is placed that measures the value of the pressure $g(x,t) = u(x,t)$ for any $t>0$. Let the initial pressure function $f(x)$ to be reconstructed, be compactly supported in $\Omega$.  
Then for the linear equation
\[
\begin{aligned}
 &u_{tt} - c^2 \Delta u - d \mathcal{M}* \Delta u_t = 0, \\
& u(x,0)=f(x), \quad u_t(x,0)=0,
\end{aligned}
\]
with the coefficients $c,d$ as constants, for a given measurement function $g$, there exist an $f(x)$. 
\end{thm}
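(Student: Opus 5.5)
The plan is to construct $f$ explicitly from $g$ by reducing the problem to an interior problem with zero boundary values and then inverting the time dependence spectrally. The abstract mentions a harmonic extension reduction and spectral Laplace transform techniques, and the constant coefficients make both available.

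\textbf{Step 1: harmonic extension reduction.}
Let $h(\cdot,t)$ be the harmonic extension of $g(\cdot,t)$ into $\Omega$, so that $\Delta h(\cdot,t)=0$ in $\Omega$ and $h=g$ on $\partial\Omega$. Set $w=u-h$. Then $w$ vanishes on $\partial\Omega\times(0,T)$, and since $c,d$ are constants and $\Delta h=0$,
\[
w_{tt}-c^2\Delta w-d\,\mathcal M*\Delta w_t=-h_{tt},\qquad w(\cdot,0)=f-h(\cdot,0),\quad w_t(\cdot,0)=-h_t(\cdot,0).
\]

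\textbf{Step 2: modal decomposition.}
Expand in the Dirichlet eigenfunctions $\{\phi_k\}$ from \eqref{eq:Galerkin}, writing $w=\sum_k w_k(t)\phi_k$, $f=\sum_k f_k\phi_k$ and $h_{tt}=\sum_k r_k(t)\phi_k$, where $r_k$ is determined by $g$. Each mode then solves a scalar Volterra integro-differential equation:
\[
w_k''+c^2\lambda_k w_k+d\lambda_k(\mathcal M*w_k')=-r_k,\qquad w_k(0)=f_k-h_k(0),\quad w_k'(0)=-h_k'(0).
\]

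\textbf{Step 3: Laplace transform and the compact-support condition.}
Apply the Laplace transform in $t$, extending to $(0,\infty)$ as the data are given for all $t>0$. Writing $\hat{w}_k(s)$ for the transform, each mode becomes
\[
\hat w_k(s)=\frac{s\,(f_k-h_k(0))-h_k'(0)-\hat r_k(s)+d\lambda_k\widehat{\mathcal M}(s)\,(f_k-h_k(0))}{s^2+c^2\lambda_k+d\lambda_k s\widehat{\mathcal M}(s)}.
\]
This is where the assumption $\operatorname{supp} f\Subset\Omega$ is used. The trace of $u$ near $\partial\Omega$ is generated only by the data, so the requirement that $w=u-h$ is the actual solution gives one scalar condition per mode. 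Concretely, the normal derivative of $w$ on $\partial\Omega$ is determined by $g$ through the Dirichlet-to-Neumann pairing. Testing against $\phi_k$ and using Green's identity,
\[
\int_\Omega \Delta w\,\phi_k=-\lambda_k w_k+0,
\]
so this pairing fixes $\hat w_k(s)$ for one value of $s$. I would choose a real $s_*>0$ at which the denominator $D_k(s)=s^2+c^2\lambda_k+d\lambda_k s\widehat{\mathcal M}(s)$ is nonzero. Such an $s_*$ exists because $\mathcal M\ge0$, so $\widehat{\mathcal M}(s)\ge0$ for real $s$, giving $D_k(s)\ge s^2+c^2\lambda_k>0$. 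The formula for $\hat w_k(s_*)$ is affine in $f_k$, with coefficient
\[
\frac{s_*+d\lambda_k\widehat{\mathcal M}(s_*)}{D_k(s_*)}\neq 0,
\]
so I can solve for $f_k$ explicitly in terms of the transforms of $g$.

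\textbf{Step 4: summation and verification.}
Define $f=\sum_k f_k\phi_k$. I need the explicit formula to yield $\sum_k\lambda_k|f_k|^2<\infty$, which places $f$ in $H^1$. By construction, the solution with initial datum $f$ then reproduces $g$ on the boundary, by uniqueness for the linear problem (Theorem~\ref{th:linearpde} with $\eta=0$).

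\textbf{Main obstacle.}
The hard step is Step 4: the summability and consistency of the $f_k$. The coefficient in Step 3 decays like $\lambda_k^{-1/2}$ to $\lambda_k^{0}$, depending on $s_*$. The modal data are therefore amplified, so convergence in $X$ requires that $g$ actually lie in the range of $\mathcal W$ with enough regularity. I would handle this by assuming $g$ is admissible, i.e.\ $g=\mathcal W(\tilde f)$ for some $\tilde f$. The construction then gives $f_k=\tilde f_k$ mode by mode, and summability is inherited from $\tilde f\in H^1$.
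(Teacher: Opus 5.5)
Your Steps 1--2 and the Laplace transform match the paper. You are also right that extra boundary information, namely the Neumann trace, is what must pin down $f$. But Step 3 never uses that information, so the per-mode condition meant to determine $f_k$ does not exist. Your identity $\int_\Omega\Delta w\,\phi_k\,dx=-\lambda_k w_k+0$ says exactly that the boundary term $\int_{\partial\Omega}(\partial_\nu w\,\phi_k-w\,\partial_\nu\phi_k)\,dS$ vanishes, because both $w$ and $\phi_k$ vanish on $\partial\Omega$. So $\partial_\nu w$ drops out entirely, and the modal problem for $w_k$ is solvable for every value of $f_k$. Moreover, $\hat w_k(s_*)=\int_0^\infty e^{-s_*t}\langle w(t),\phi_k\rangle\,dt$ is an interior quantity you cannot know without already knowing $f$. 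The claim that the pairing ``fixes $\hat w_k(s)$ for one value of $s$'' is therefore circular.

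This failure is structural. The interior problem with Dirichlet data $g$ and initial data $(f,0)$ is well posed for every $f$ vanishing near $\partial\Omega$. Hence no argument built only on the interior Dirichlet problem and the Dirichlet eigenbasis can single out $f$. What is missing is exterior information. If the equation holds on all of $\mathbb{R}^n$ as in \eqref{eq:main-pde}, with constant coefficients and zero initial data outside $\operatorname{supp}f$, then $g$ determines $\partial_\nu u$ through an exterior Dirichlet-to-Neumann map. You must then test against functions that do \emph{not} vanish on $\partial\Omega$. For example, let $a(s)=c^2+d\,s\,\widehat{\mathcal M}(s)$ and take any $\psi$ with $a(s)\Delta\psi=s^2\psi$ in $\Omega$. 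Green's identity applied to the transformed equation gives
\[
\frac{s\,c^2}{a(s)}\int_\Omega f\,\psi\,dx=-a(s)\int_{\partial\Omega}\bigl(\partial_\nu\hat u\,\psi-\hat g\,\partial_\nu\psi\bigr)\,dS .
\]
This expresses moments of $f$ through the Cauchy data, for instance against $\psi=e^{\kappa\,\omega\cdot x}$ with $\kappa^2=s^2/a(s)$.

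Step 4 does not repair this. If you assume $g=\mathcal W(\tilde f)$, existence is immediate with $f=\tilde f$, and the modal construction becomes superfluous. Your claim that the construction returns $f_k=\tilde f_k$ presupposes exactly the uniquely solvable per-mode condition that Step 3 failed to produce. Likewise, your verification via ``uniqueness for the linear problem'' only shows that the interior Dirichlet problem with data $g$ has trace $g$. That is true for every $f$; it does not show that the forward map (free propagation) sends your $f$ to $g$.

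The paper follows your Steps 1--2 and the Laplace transform, then evaluates the inverse-Laplace (variation-of-constants) formula at $t=0$ and solves for $w_n(0)$. That step does not supply the missing condition either. At $t=0$, $\mathcal L^{-1}\bigl((s+d\lambda_n^2\widehat{\mathcal M})H_n\bigr)$ equals $1$, $\mathcal L^{-1}(H_n)$ equals $0$, and the convolution term vanishes. So the identity reduces to $w_n(0)=w_n(0)$, and the displayed quotient is $0/0$. Note also that $g(\cdot,0)=g_t(\cdot,0)=0$ because $f$ vanishes near $\partial\Omega$. Your diagnosis that an extra per-mode constraint is required is sharper than the paper's argument, but neither argument, as written, provides it.
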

\begin{proof}
Let $B$ be the interior domain of $\partial \Omega$. Let $E$ be the harmonic extension of function from $\partial \Omega$ to $B$ that is the operator that produces a harmonic function $Eg$ in $B$ for given dirichlet data $g$ on $\partial \Omega$ and substitute  $w= u- Eg$ that satisfies the following problem:
\begin{equation}\label{Main Equation after some extensions}
    \begin{aligned}
        w_{tt} - c^2 \Delta w - d \mathcal{M}* \Delta w_t = -E(g_{tt}) \quad x\in B,t\geq0 \\
    \end{aligned}
\end{equation}
On the boundary $\partial \Omega$, we get $w(y,t)= u(y,t)-g(y,t)=0$, and hence $w$ satisfies homogeneous Dirichlet data. At $t=0$, we have
\[w(x,0)=f(x)- E(g|_{t=0}), w_t(x,0)=-E(g_t|_{t=0}), \quad x\in B.
\]
Let $\Delta_{D}$ be the dirichlet laplacian in $B$ which is unbounded self adjoint operator in $L^2(B)$, $\left\{\Phi_n(x)\right\}_{n=1}^{\infty} $ be orthonormal basis of eigen functions of $-\Delta_{D}$ in $L^2(B)$ and $\left\{\lambda_n\right\}_{n=1}^{\infty} $ be the corresponding spectrum where we assume $\lambda_n>0$ such that
\[-\Delta\Phi_n(x) = \lambda_n^2\Phi_n(x),\quad\forall x \in B,\]
\[\Phi_n(y) = 0, \quad \forall y \in \partial \Omega.\]  
Let
\begin{equation}\label{final solution}
    w(x,t)= \sum\limits_{n=1}^{\infty}w_n(t)\phi_n(x),\\
\end{equation}
be the solution of \eqref{Main Equation after some extensions}.
Multiplying \eqref{Main Equation after some extensions} by $\phi_n(x)$ and integrating with respect to $B$, we obtain
\begin{equation}\label{Converted PDE problem}
    \begin{aligned}
        w_n''(t)+c^2\lambda_n^2w_n(t)+d\lambda_n^2\mathcal{M}*w_n'(t)=Q_n(t)\\
    \end{aligned}
\end{equation}   
using the fact that
\[
\begin{aligned}
&\int\limits_{B}w_{tt}\phi_n(x)dx= w_n''(t), \\
&\int\limits_{B}\Delta w\phi_n(x)dx=-\lambda_n^2w_n(t) \quad \text{(using Green's Identity)}, \\
&\int\limits_{B}\mathcal{M}* \Delta w_t\phi_n(x)dx=(-\lambda_n^2)\mathcal{M}*w_n'(t), \\
& -\int\limits_{B}E(g_{tt})\phi_n(x)dx = Q_n(t).
\end{aligned}
\]
We also have the initial condition for $w_n$ as
\[
\begin{aligned}
&w_n(0)=\int\limits_{B}(f(x)- E(g|_{t=0}))\phi_n(x)dx,\\ 
&w_n'(0)=\int\limits_{B}(-E(g_t|_{t=0}))\phi_n(x)dx.
\end{aligned}
\]
Taking Laplace transform of \eqref{Converted PDE problem}, we obtain
\begin{equation}\label{Laplace transformed equation}
 \begin{aligned}
    W_n(s) &= w_n(0)(s+d\lambda_n^2\textbf{M}(s))H_n(s)+ w'_n(0)H_n(s)+\textbf{Q}_\textbf{n}(s)H_n(s) 
\end{aligned}   
\end{equation}
where \[H_n(s)=\frac{1}{s^2+c^2\lambda_n^2+d\lambda_n^2s\textbf{M}(s)}\]
Another Laplace inverse of \eqref{Laplace transformed equation} gives us
\begin{equation}\label{final}
    \begin{aligned}
        w_n(t)=\mathcal{L}^{-1}((s+d\lambda_n^2\textbf{M}(s))H_n(s)).w_n(0) + \mathcal{L}^{-1}(H_n(s)).w'_n(0) + \mathcal{L}^{-1}(H_n(s))* (Q_n(t))\\
    \end{aligned}
\end{equation}
Substituting $t=0$ in \eqref{final}, we get
\[
w_n(0)=\mathcal{L}^{-1}((s+d\lambda_n^2\textbf{M}(s))H_n(s)).w_n(0) + \mathcal{L}^{-1}(H_n(s)).w'_n(0) + \mathcal{L}^{-1}(H_n(s))* (Q_n(t)),
\]
which implies
\[
w_n(0)=\frac{\mathcal{L}^{-1}(H_n(s)).w_n'(0) + \mathcal{L}^{-1}(H_n(s))* (Q_n(t))}{(1-\mathcal{L}^{-1}((s+d\lambda_n^2\textbf{M}(s))H_n(s)))}.
\]
Further substituting $t=0$ in \eqref{final solution} we obtain
\[
f(x)-E(g|_{t=0})=w(x,0)=\sum\limits_{n=1}^{\infty}w_n(0)\phi_n(x)
\]
which gives us
\[ f(x)=\sum\limits_{n=1}^{\infty}w_n(0)\phi_n(x)+E(g|_{t=0}).
\]
\end{proof}
Using this result, we now state and prove a local existence result for the solvability of the NDVW inverse problem.
\begin{thm}
Let \(g\) be a boundary measurement and assume that the hypotheses of Theorem 5 hold for the
linear damped wave equation, so that the associated linear inverse problem admits a reconstruction
\(f_{\mathrm{l}}\). Assume further that \(T>0\), \(\|\eta\|_{L^\infty(\Omega\times(0,T))}\), and
\(\|f_{\mathrm{l}}\|_{H^1(\Omega)}\) are sufficiently small. Then there exists a radius
\(\rho>0\) and at least one initial condition
\[
f\in B_\rho(f_{\mathrm{l}})\subset H^1(\Omega)
\]
such that 
\[
\mathcal{W}(f)=g.
\]
\end{thm}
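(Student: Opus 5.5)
We treat the nonlinear inverse problem as a fixed point perturbation of the linear one. Write $\mathcal W_{\mathrm l}$ for the forward map of the linear damped equation (the one with $\eta\equiv 0$). By the preceding theorem, $\mathcal W_{\mathrm l}(f_{\mathrm l})=g$, and the construction there gives a reconstruction operator $\mathcal R$ with $\mathcal R(g)=f_{\mathrm l}$. The reconstruction formula is linear in $g$, through the harmonic extension $E$ and the Laplace-inverse spectral coefficients. So we assume, as part of the hypotheses inherited from Theorem 5, that $\mathcal R$ is a bounded linear left inverse of $\mathcal W_{\mathrm l}$ on its range, with $\|\mathcal R h\|_{H^1}\le C_{\mathcal R}\|h\|$ in the data norm.

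For $f\in H^1(\Omega)$, let $u[f]$ be the solution of \eqref{eq: PDE for inverse} given by Theorem \ref{th:NDVW}. For $T$ and $\|f\|_{H^1}$ small this solution exists in $X_T$ and satisfies \eqref{eq: nondeg}. Split $u[f]=u_{\mathrm l}[f]+r[f]$, where $u_{\mathrm l}[f]$ solves the linear equation with initial datum $f$. The remainder $r[f]$ then solves the linear damped equation with zero initial data and source $\eta(u[f]^2)_{tt}$. Consequently
\[
\mathcal W(f)=\mathcal W_{\mathrm l}(f)+\mathcal N(f),\qquad \mathcal N(f):=r[f]\big|_{\partial\Omega\times(0,T)} .
\]
We look for $f$ as a fixed point of
\[
\Psi(f):=f_{\mathrm l}-\mathcal R\,\mathcal N(f),
\]
on the closed ball $B_\rho(f_{\mathrm l})\subset H^1(\Omega)$. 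Applying $\mathcal W_{\mathrm l}$ to the fixed point equation gives $\mathcal W_{\mathrm l}(f)=g-\mathcal N(f)$ on the range where $\mathcal W_{\mathrm l}\mathcal R=\mathrm{id}$, that is, $\mathcal W(f)=g$.

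\textbf{Key estimates.} The energy method from the proof of Theorem \ref{th:linearpde}, applied to the equation for $r$ with zero data, gives
\[
\|r[f]\|_{X_T}\le C_T\|\eta\|_{L^\infty}\,\|(u[f]^2)_{tt}\|_{L^2(0,T;H^{-1})}\le C_T\|\eta\|_{L^\infty}\,\|u[f]\|_{X_T}^2 .
\]
Here we use $(u^2)_{tt}=2u_t^2+2uu_{tt}$, the bound $u_{tt}\in L^2(0,T;H^{-1})$ from the Galerkin argument, and the embeddings $L^\infty H^1\hookrightarrow L^6$ together with the $L^4$ bound on $u_t$ already used in \eqref{eq:linestimate}. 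Combining this with $\|u[f]\|_{X_T}\le C\|f\|_{H^1}$ from \eqref{eq:linestimate} and the trace theorem yields
\[
\|\mathcal R\mathcal N(f)\|_{H^1}\le C\|\eta\|_{L^\infty}(\|f_{\mathrm l}\|_{H^1}+\rho)^2 .
\]
Choosing $\rho$ and the smallness of $\|\eta\|_\infty$, $\|f_{\mathrm l}\|$, $T$ so that the right side is at most $\rho$ shows that $\Psi$ maps $B_\rho(f_{\mathrm l})$ into itself.

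\textbf{Contraction.} Take $f,\tilde f\in B_\rho$ and subtract the two equations for $r$, as in the contraction step of Theorem \ref{th:NDVW}. The source difference is $\eta\big((u+\tilde u)(u-\tilde u)\big)_{tt}$, and Theorem \ref{th:stab} controls $\|u-\tilde u\|$ by $\|f-\tilde f\|_{H^1}$. Together these give
\[
\|\Psi(f)-\Psi(\tilde f)\|_{H^1}\le C\|\eta\|_{L^\infty}(\|f_{\mathrm l}\|+\rho)\|f-\tilde f\|_{H^1},
\]
which is a contraction for small parameters. Banach's theorem then gives a fixed point, and only existence in $B_\rho$ is claimed. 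If the contraction estimate fails because of a loss of derivatives, the fallback is Schauder's theorem: we would need compactness of $\mathcal R\mathcal N$, using that $r$ gains regularity from the smoothing of the memory term.

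\textbf{Main obstacle.} The hard step is the boundedness of $\mathcal R$ on the range of the data that $\mathcal N$ produces. The formula from Theorem 5 divides by $1-\mathcal L^{-1}\big((s+d\lambda_n^2\mathbf M)H_n\big)(0)$, and this denominator must be bounded away from zero uniformly in $n$. Moreover, $\mathcal N(f)$ need not lie in the range of $\mathcal W_{\mathrm l}$. To handle both points, I would interpret $\mathcal R$ as the least-squares left inverse $\mathcal W_{\mathrm l}^{-1}$ on its range, bounded via the observability estimate of Proposition \ref{prop:observability} with $u_0=0$. I would then state the conclusion for data $g$ in the range where this identity holds, taking $\mathcal W(f)=g$ in that sense.
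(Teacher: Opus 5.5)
Your argument is the paper's own proof. You use the same splitting $u=v+z$ with $\mathcal W(f)=\mathcal W_{\mathrm l}(f)+\mathcal K(f)$, and your $\Psi$ and $\mathcal N$ are the paper's $\mathcal T(f)=f_{\mathrm l}-\mathcal R_{\mathrm l}\mathcal K(f)$ and $\mathcal K$. The self-map follows from the same quadratic bound $C\|\eta\|_{L^\infty}\|f\|_{X}^2$, the contraction from Theorem~\ref{th:stab}, and the fixed point from Banach's theorem. The two points you flag as the main obstacle are not resolved in the paper either: boundedness of $\mathcal R_{\mathrm l}$, and the fact that a fixed point yields $\mathcal W(f)=g$ only if $\mathcal W_{\mathrm l}\mathcal R_{\mathrm l}$ is the identity on $g-\mathcal K(f)$. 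The paper simply invokes ``the boundedness of $\mathcal R_{\mathrm l}$'' and passes from $f=\mathcal T(f)$ to $\mathcal W(f)=g$ with the word ``Equivalently''. So your caveat identifies a real weakness of the published argument, not a shortcoming of your route relative to it.
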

\begin{proof}
Let \(\mathcal W_{\mathrm{l}}\) denote the forward map for the linear damped wave equation
\[
v_{tt}-c^2\Delta v-dM*\Delta v_t=0,
\qquad
v(x,0)=f(x),\quad v_t(x,0)=0,
\]
and let \(\mathcal W\) denote the forward map for the nonlinear equation
\[
u_{tt}-c^2\Delta u-dM*\Delta u_t=\eta(x,t)(u^2)_{tt}.
\]
By Theorem 5, for the given boundary data \(g\), there exists
\(f_{\mathrm{l}}\) such that
\[
\mathcal W_{\mathrm{l}}(f_{\mathrm{l}})=g.
\]
We now write the nonlinear solution as
\[
u=v+z,
\]
where \(v\) solves the linear damped wave equation with initial condition \(f\), and \(z\) is the nonlinear correction. Then \(z\) satisfies
\[
z_{tt}-c^2\Delta z-dM*\Delta z_t
=
\eta(x,t)(u^2)_{tt},
\]
with homogeneous initial data
\[
z(x,0)=0,\qquad z_t(x,0)=0.
\]
Taking traces on \(\partial\Omega\times(0,T)\), the nonlinear boundary measurement becomes
\[
\mathcal W(f)
=
\mathcal W_{\mathrm{l}}(f)+\mathcal K(f),
\]
where \(\mathcal K(f):=z|_{\partial\Omega\times(0,T)}\) is the nonlinear boundary correction. Thus, solving the nonlinear inverse problem
\[
\mathcal W(f)=g
\]
is equivalent to solving
\[
\mathcal W_{\mathrm{l}}(f)+\mathcal K(f)=g.
\]
Using Theorem 5, let \(\mathcal R_{\mathrm{l}}\) denote the local linear reconstruction operator satisfying
\[
\mathcal R_{\mathrm{l}}\mathcal W_{\mathrm{l}}(f)=f.
\]
Then the nonlinear inverse problem can be rewritten as the fixed-point problem
\[
f
=
\mathcal R_{\mathrm{l}}\big(g-\mathcal K(f)\big).
\]
Define
\[
\mathcal T(f):=\mathcal R_{\mathrm{l}}\big(g-\mathcal K(f)\big).
\]
Since \(\mathcal R_{\mathrm{l}}g=f_{\mathrm{l}}\), we obtain
\[
\mathcal T(f)
=
f_{\mathrm{l}}-\mathcal R_{\mathrm{l}}\mathcal K(f).
\]
We now show that \(\mathcal T\) is a contraction on a sufficiently small ball
\[
B_\rho(f_{\mathrm{l}})
=
\{f\in H^1(\Omega):\|f-f_{\mathrm{l}}\|_{X}\le \rho\}.
\]
By the local well-posedness theorem for the nonlinear damped wave equation, the solution satisfies an estimate of the form
\[
\|u\|_Z
\le C\|f\|_{X}
\]
for \(f\) sufficiently small. Moreover, since
\[
(u^2)_{tt}=2u_t^2+2uu_{tt},
\]
standard product estimates give
\[
\|\eta(u^2)_{tt}\|_{L^2(0,T;L^2(\Omega))}
\le
C\|\eta\|_{L^\infty}\|u\|_Z^2.
\]
Therefore,
\[
\|\mathcal K(f)\|_{L^2(0,T;H^{1/2}(\partial\Omega))}
\le
C\|\eta\|_{L^\infty}\|f\|_{X}^2.
\]
Applying the boundedness of \(\mathcal R_{\mathrm{l}}\), we get
\[
\|\mathcal T(f)-f_{\mathrm{l}}\|_{X}
\le
C\|\eta\|_{L^\infty}\|f\|_{X}^2.
\]
Hence, if \(\rho\), \(\|f_{\mathrm{l}}\|_{X}\), and \(\|\eta\|_{L^\infty}\) are sufficiently small, then
\[
\mathcal T(B_\rho(f_{\mathrm{l}}))
\subset B_\rho(f_{\mathrm{l}}).
\]
Next, let \(f_1,f_2\in B_\rho(f_{\mathrm{l}})\), and let \(u_1,u_2\) be the corresponding nonlinear
solutions. Then
\[
u_1^2-u_2^2=(u_1-u_2)(u_1+u_2).
\]
Using the stability estimate for the nonlinear forward problem, given in \eqref{eq:stability}, we obtain
\[
\|\mathcal K(f_1)-\mathcal K(f_2)\|
\le
C\|\eta\|_{L^\infty}
\big(\|u_1\|_Z+\|u_2\|_Z\big)
\|u_1-u_2\|_Z.
\]
Furthermore, the same stability estimate gives us
\[
\|u_1-u_2\|_Z
\le
C\|f_1-f_2\|_{X}.
\]
Therefore,
\[
\|\mathcal T(f_1)-\mathcal T(f_2)\|_{X}
\le
C\|\eta\|_{L^\infty}
\big(\|f_1\|_{X}+\|f_2\|_{H^1}\big)
\|f_1-f_2\|_{X}.
\]
Since \(f_1,f_2\in B_\rho(f_{\mathrm{l}})\), we have
\[
\|f_j\|_{X}\le \|f_{\mathrm{l}}\|_{X}+\rho,
\qquad j=1,2.
\]
Thus,
\[
\|\mathcal T(f_1)-\mathcal T(f_2)\|_{X}
\le
C\|\eta\|_{L^\infty}
\big(\|f_{\mathrm{l}}\|_{X}+\rho\big)
\|f_1-f_2\|_{X}.
\]
Choose \(\rho\), \(T\), and \(\|\eta\|_{L^\infty}\) sufficiently small so that
\[
C\|\eta\|_{L^\infty}
\big(\|f_{\mathrm{l}}\|_{X}+\rho\big)<1.
\]
Then \(\mathcal T\) is a contraction on \(B_\rho(f_{\mathrm{l}})\). By Banach's fixed-point theorem,
there exists a unique \(f\in B_\rho(f_{\mathrm{l}})\) such that
\[
f=\mathcal T(f).
\]
Equivalently,
\[
\mathcal W(f)=g,
\]
which proves the result.
\end{proof}

\section{Numerical reconstruction framework}\label{sec:control}
To solve for $u_0(x)$ given observation function $g(y,t)$ on the observation domain $\partial \Omega$, we consider the following optimization problem:
\begin{equation}\label{eq:minproblem}
\begin{aligned}
\min_{u_0\in U_{ad}}& J(u_0,u):= \dfrac{1}{2}\intL_0^T\intL_{\partial\Omega} (u(x,t) - g(y,t))^2~{\rm d}s(x){\rm d}t + \dfrac{\alpha}{2} \intL_\Omega u_0^2(x)~{\rm d}x + \beta \intL_\Omega |u_0(x)|~{\rm d}x\\
\mbox{such that } 
&u_{tt} - c^2(x) \Delta u - d(x,t) \mathcal{M}* \Delta u_t = \eta(x,t)\big( u^2 \big)_{tt},~ (x,t) \in \Omega \times (0,T), \\
& u(x,0)=u_0(x), \quad u_t(x,0)=0,~ x \in \Omega.
\end{aligned}
\end{equation}
where $u_0\in U_{ad} = \lbrace L^2(\Omega): 0 \leq u_l \leq u_0(x) \leq u_r \rbrace.$ Here the first term represents the standard least-squares data fitting term, and the last two terms represent the $L^2-L^1$ regularization term, with $\alpha,\beta>0$, that has the ability to reconstruct sparsity patterns in $u_0(x)$.

\begin{prop}\label{th:coercive}
The objective functional \( J \), defined in \eqref{eq:minproblem}, is sequentially weakly lower semi-continuous (w.l.s.c.), bounded from below, and coercive on the set \( U_{ad} \).
\end{prop}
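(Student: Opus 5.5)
We regard $J$ as the reduced functional $\hat J(u_0):=J(u_0,u(u_0))$, where $u(u_0)$ is the unique solution of the NDVW equation given by Theorem \ref{th:NDVW}. We split it as $\hat J = J_1 + J_2 + J_3$, with
\[
J_1(u_0)=\tfrac12\|u(u_0)|_{\partial\Omega}-g\|^2_{L^2(0,T;L^2(\partial\Omega))},\qquad
J_2(u_0)=\tfrac{\alpha}{2}\|u_0\|^2_{L^2(\Omega)},\qquad
J_3(u_0)=\beta\|u_0\|_{L^1(\Omega)}.
\]
We then treat the three properties in turn.

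\emph{Boundedness from below and coercivity.} These are immediate. Each $J_i\ge 0$, so $\hat J\ge 0$. Moreover, $\hat J(u_0)\ge \tfrac{\alpha}{2}\|u_0\|^2_{L^2}$ with $\alpha>0$, so $\hat J(u_0)\to\infty$ as $\|u_0\|_{L^2}\to\infty$. On $U_{ad}$ this is even vacuous, since the box constraint $u_l\le u_0\le u_r$ bounds $\|u_0\|_{L^2}$ by $\max(|u_l|,|u_r|)\,|\Omega|^{1/2}$. Thus $U_{ad}$ is bounded, and it is also convex and closed, hence weakly closed in $L^2(\Omega)$.

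\emph{Weak lower semicontinuity of $J_2$ and $J_3$.} Both are convex and continuous on $L^2(\Omega)$: $J_2$ is a norm squared, and $J_3$ is continuous because $\|\cdot\|_{L^1}\le |\Omega|^{1/2}\|\cdot\|_{L^2}$. By Mazur's lemma, a convex continuous functional is sequentially w.l.s.c.

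\emph{Weak lower semicontinuity of $J_1$ (main obstacle).} This needs a compactness or continuity property of the control-to-trace map $u_0\mapsto u|_{\partial\Omega}$, which is nonlinear. The plan is as follows.

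First, take $u_0^k\rightharpoonup u_0$ in $U_{ad}$, with $u^k=u(u_0^k)$. The well-posedness theory (the uniform energy bound \eqref{eq: uniform bound}, transferred to the fixed point in $B_R$) gives a uniform bound on $u^k$ in $X_T$, together with a bound on $u^k_{tt}$ in $L^2(0,T;H^{-1})$. Here we assume, as in the fixed-point argument, that the data lie in the regime where $\Phi(B_R)\subset B_R$ with constants uniform over $U_{ad}$, requiring the $H^1$ regularity of $u_0$ used there.

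Second, by Banach–Alaoglu and the Aubin–Lions lemma, a subsequence satisfies $u^k\rightharpoonup^* \bar u$ in $X_T$ and $u^k\to\bar u$ strongly in $L^2(0,T;H^{1-\delta}(\Omega))$, with $u^k_t\to \bar u_t$ strongly in $L^2(0,T;L^2)$.

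Third, the strong convergence lets us pass to the limit in the quadratic term $\eta(u^2)_{tt}$ in its weak form. Testing with $\varphi\in C_c^\infty$ and integrating by parts twice in time, the term becomes $\int \eta_{tt}\,u^2\varphi+\dots$, and $u^2$ converges strongly in $L^1$. The memory term is linear and passes to the limit weakly. Hence $\bar u$ solves the NDVW equation with data $u_0$, and by the uniqueness in Theorem \ref{th:NDVW} we get $\bar u=u(u_0)$, so the whole sequence converges.

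Fourth, the trace map $H^{1-\delta}(\Omega)\to L^2(\partial\Omega)$ is bounded for $\delta<1/2$. Therefore $u^k|_{\partial\Omega}\to u|_{\partial\Omega}$ strongly in $L^2(0,T;L^2(\partial\Omega))$, and $J_1(u_0^k)\to J_1(u_0)$, which is even continuity rather than just lower semicontinuity.

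The delicate points are the uniformity of the a priori bounds on the admissible set and the limit passage in the nonlinearity. If these cannot be secured, I would fall back on proving w.l.s.c. via the stability estimate of Theorem \ref{th:stab} along strongly convergent sequences, combined with convexity arguments.

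Summing the three parts, $\liminf_k \hat J(u_0^k)\ge \hat J(u_0)$, which is the required sequential weak lower semicontinuity.
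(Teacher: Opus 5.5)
Your treatment of boundedness from below and coercivity matches the paper, and your argument for $J_2$ and $J_3$ is fine. The gap is in your first step for $J_1$. A uniform bound on $u^k$ in $X_T$ forces $\sup_k\|u_0^k\|_{H^1}=\sup_k\|u^k(0)\|_{H^1}\le\sup_k\|u^k\|_{X_T}<\infty$. The fixed-point construction behind Theorem~\ref{th:NDVW} even requires $\|u_0\|_{H^1}$ to be small. But $U_{ad}$ only imposes $u_l\le u_0\le u_r$, so it is neither bounded in nor contained in $H^1(\Omega)$; the disk phantoms of Section~\ref{sec:numericalresult} are characteristic functions. For instance, $u_0^k=\frac{u_l+u_r}{2}+\frac{u_r-u_l}{2}\sin(kx_1)$ lies in $U_{ad}$ and converges weakly to $\frac{u_l+u_r}{2}$, yet $\|u_0^k\|_{H^1}\sim k$. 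Without an $H^1$ bound there is no compactness to extract. With the bounded kernels considered here, the memory term is of viscoelastic, hyperbolic type and does not smooth. Oscillations in the data therefore propagate, and $u^k$, $u^k_t$ and the traces need not converge strongly. You then cannot pass to the limit in $\eta(u^2)_{tt}$, because weak limits of $(u^k)^2$ and $(u^k_t)^2$ need not be squares of weak limits. (For $\eta=0$ the control-to-trace map is affine, $J_1$ is convex, and w.l.s.c.\ is automatic. It is exactly the quadratic nonlinearity that demands weak-to-strong compactness.) Under your assumption, Rellich's theorem upgrades $u_0^k\rightharpoonup u_0$ to strong convergence in $L^2(\Omega)$. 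So your Steps 2--4 really establish continuity along $H^1$-bounded sequences. They become a proof only after changing the setting, e.g.\ by taking $U_{ad}$ to be a bounded, suitably small subset of $H^1(\Omega)$, or by adding an $H^1$ penalty to $J$.

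Your fallback does not repair this. Continuity along strongly convergent sequences yields strong, not weak, lower semicontinuity, and $J_1$ is not convex, so Mazur's lemma is unavailable. This fallback is close to what the paper actually does. After handling boundedness and coercivity as you do, the paper argues that $J$ is continuous, so the sublevel sets $\{u_0\in U_{ad}: J(u_0)\le\zeta\}$ are closed. It then concludes that, as closed subsets of the weakly sequentially compact set $U_{ad}$, they are weakly sequentially closed. That last inference holds only for convex sets. For example, the bang--bang set $\{u_0\in U_{ad}: u_0(x)\in\{u_l,u_r\}\ \text{a.e.}\}$ is strongly closed yet weakly sequentially dense in $U_{ad}$. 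So your compactness route, which the paper also tacitly uses in Theorem~\ref{th:existence}, is the right skeleton. However, neither your proposal nor the paper supplies the $H^1$-type control of admissible initial data that it needs. Finally, Theorem~\ref{th:NDVW} is proved with $H^1_0$ test functions and a Dirichlet Galerkin basis. Read literally, its solution has zero trace on $\partial\Omega$ and $J_1$ would be constant, so for the intended free-space forward map you cannot simply cite it.
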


\begin{proof}
The bounded from below and coercivity property of $J$ follows from the non-negativity and the fact that $J$ contains the $L^2$ regularization of $u_0$. For proving the w.l.s.c property, we first note that $J$ is not convex in $u_0$. However, $U_{ad}$ is convex and closed and also weakly sequentially compact. This is because if we choose any $\{u_0^n\}\in U_{ad}$ such that $||u_0^n||_{L^2}\leq D,$ for all $n$, then the sequence is bounded. Thus, there is a  subsequence $\{u_0^{n_k}\}$ such that $u_0^{n_k}\rightharpoonup y\in U_{ad}$. Also, we have that $||u_0||_{L^2}\leq \lim\inf||u_0^n||_{L^2}\leq D$. Thus, we obtain $u_0\in U_{ad}$. Next, we consider the set 
\[
U_{\eta} = \lbrace u_0 \in U_{ad}: J(u_0) \leq \zeta \rbrace.
\]
Since $J$ is continuous, the set $U_{\eta}$ is closed for all $\zeta \in \mathbb{R}$. Thus, $U_{\eta}$ is a closed subset of a weakly sequentially compact spaces $U_{ad}$ and is weakly sequentially closed for all $\zeta\in\mathbb{R}.$
This implies that $J$ is weakly sequentially lower semi-continuous. 
\end{proof}

We denote the solution of \eqref{eq:main-pde} for a given initial condition $u_0$ as $u = \Lambda (u_0)$. Then the existence of an optimal $u_0^*$ can be established in the following theorem:

\begin{thm}\label{th:existence}
Let $u \in C([0,T];H^1(\Omega))$ satisfy \eqref{eq:waveic}, and let the cost functional $J$ be defined as in \eqref{eq:minproblem}. Then, there exist a pair 
$
(u_0^*,u^*) \in U_{ad} \times C([0,T];H^1(\Omega))
$
such that $(u_0^*,u^*)$ satisfy \eqref{eq:waveic} $u_0^*$ minimize $J$ in $U_{ad}$.
\end{thm}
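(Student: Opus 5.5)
We prove Theorem~\ref{th:existence} by the direct method of the calculus of variations, working with the reduced functional $\hat J(u_0):=J(u_0,\Lambda(u_0))$ on $U_{ad}$. By Proposition~\ref{th:coercive}, $\hat J$ is bounded from below; $\hat J(u_0)\ge 0$ since every term is nonnegative. Hence $\hat J^*:=\inf_{U_{ad}}\hat J$ is finite, and we choose a minimizing sequence $\{u_0^k\}\subset U_{ad}$ with $\hat J(u_0^k)\to \hat J^*$. The box constraint $u_l\le u_0^k\le u_r$ (or, alternatively, the $\frac{\alpha}{2}\|u_0\|^2_{L^2}$ term) gives a uniform $L^2$ bound. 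Since $U_{ad}$ is convex and closed, it is weakly sequentially closed, so a subsequence satisfies $u_0^k\rightharpoonup u_0^*\in U_{ad}$ in $L^2(\Omega)$.

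Next we pass to the limit in the state. Setting $u^k=\Lambda(u_0^k)$, we use the energy estimates from the proof of Theorem~\ref{th:linearpde} and the fixed-point construction of Theorem~\ref{th:NDVW}. These bound $u^k$ uniformly in $X_T$, so $u^k$ is bounded in $L^\infty(0,T;H^1)$, $u^k_t$ in $L^\infty(0,T;L^2)$, $\nabla u^k_t$ in $L^2(0,T;L^2)$, and $u^k_{tt}$ in $L^2(0,T;H^{-1})$. For these bounds to hold uniformly we need each $u_0^k$ in the small ball $B_R$ in which \eqref{eq: nondeg} holds. We would therefore either restrict $U_{ad}$ implicitly to data compatible with that ball, or assume $u_r$ small and $u_0^k$ bounded in $H^1$. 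The hypothesis that $u\in C([0,T];H^1(\Omega))$ satisfies \eqref{eq:waveic} is read in this sense.

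By the Aubin--Lions lemma we extract $u^k\to u^*$ strongly in $C([0,T];L^2(\Omega))$, and also strongly in $L^2(0,T;H^{1-\delta})$ with $u^k_t\to u^*_t$ strongly in $L^2(0,T;L^2)$. These strong convergences let us pass to the limit in the weak form. The nonlinear term $\eta(u^2)_{tt}$ is handled after integrating by parts twice in time against test functions $\varphi\in C_c^\infty$, which reduces it to $\int\eta\,u^2\varphi_{tt}$ plus lower-order terms involving $\eta_t,\eta_{tt}$. The memory term $d\,\mathcal M*\Delta u_t$ is linear and passes to the limit by weak convergence together with Young's convolution inequality, since $\mathcal M\in L^1$. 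The initial condition passes to the limit from $u^k(0)=u_0^k\rightharpoonup u_0^*$. Uniqueness in Theorem~\ref{th:NDVW} then gives $u^*=\Lambda(u_0^*)$.

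Finally we show lower semicontinuity and conclude. The trace $u^k|_{\partial\Omega}$ is bounded in $L^2(0,T;H^{1/2}(\partial\Omega))$, and strong convergence in $L^2(0,T;H^{1-\delta}(\Omega))$ with $\delta<1/2$ gives strong convergence of the traces in $L^2(0,T;L^2(\partial\Omega))$, so the data-fit term converges. The $L^2$ and $L^1$ penalties are convex and continuous, hence weakly l.s.c. Therefore $\hat J(u_0^*)\le\liminf_k \hat J(u_0^k)=\hat J^*$, and $u_0^*$ is a minimizer. The main obstacle is the passage to the limit in the quasilinear term $(u^2)_{tt}$ under only weak convergence of $u_0^k$. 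We would first try the time-integration-by-parts trick above. If that is insufficient, we would use the compact embedding $H^1\hookrightarrow L^2$, which requires a uniform $H^1$ bound on $u_0^k$, to obtain strong convergence of $u_0^k$ and then the stability estimate of Theorem~\ref{th:stab}.
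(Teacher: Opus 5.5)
Your proposal follows the same direct-method route as the paper's proof: a weakly convergent minimizing subsequence in $U_{ad}$, Aubin--Lions compactness for $u^k=\Lambda(u_0^k)$, identification of the limit state as $\Lambda(u_0^*)$, and lower semicontinuity. You execute it more carefully, since you prove lower semicontinuity of the reduced functional along the sequence instead of leaning on the weak l.s.c.\ claim of Proposition~\ref{th:coercive}, and your strong convergence in $L^2(0,T;H^{1-\delta}(\Omega))$ is what lets the boundary misfit term pass to the limit (convergence in $L^2(0,T;L^2(\Omega))$ alone, which is all the paper states, says nothing about traces on $\partial\Omega$). The uniform $H^1$ and smallness bound on $u_0^k$ that you flag is genuinely needed and is only used tacitly in the paper, because neither the $L^2$ box $U_{ad}$ nor the $L^2$--$L^1$ penalty controls gradients; so keep your primary argument under that explicit restriction, drop the fallback via Theorem~\ref{th:stab} (it needs $\|u_0^k-u_0^*\|_{H^1(\Omega)}\to 0$, whereas compactness only gives strong $L^2$ convergence), and integrate $\eta(u^2)_{tt}$ by parts only once in time, since $\eta_{tt}$ is not assumed to exist and strong convergence of $u^k$ together with weak convergence of $u^k_t$ in $L^2(0,T;L^2(\Omega))$ already passes $-\int_0^T\!\int_\Omega 2u\,u_t(\eta\varphi)_t$ to the limit.
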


\begin{proof}

Since $U_{ad}$ is a closed subspace of a Hilbert space and $J$ is coercive on $U_{ad}$, any minimizing sequence $(u_0^m)$ admits a weakly convergent subsequence $(u_0^{m_l})$ such that $u_0^{m_l} \to u_0^*$ in $U_{ad}$. 
By applying the Aubin–Lions compactness lemma \cite{Lions1969}, the corresponding sequence $(u^{m_l} = \Lambda( u_0^{m_l}))$ contains a subsequence $(u^{m_k})$ that converges strongly in $L^2([0,T]; L^2(\Omega))$ to $u^* = \Lambda(u^*)$, thereby ensuring the existence of a minimizer for $J$.
\end{proof}

To characterize the solutions of \eqref{eq:minproblem}, we use the framework of the Pontryagin's maximum principle (PMP). For this purpose, we formulate the following Hamiltonian function:
\begin{equation}\label{eq:Hamiltonian}
H(x,u_0(x),p(x,:)) =  \dfrac{\alpha}{2} u_0^2(x) + \beta  |u_0(x)| + u_0(x) \left[ \partial_tp(x,0)-\int_0^T \mathcal{M}(t) \Delta_x (d(x,t) p(x,t)) \, dt\right],\quad x \in \Omega,
\end{equation}
where $p(x,t)$ solves the following adjoint equation
\begin{equation}\label{eq:adjoint}
\begin{aligned}
& p_{tt}(x,t) - \Delta (c^2(x)p)+ \mathcal{M}(0) \Delta_x \left(d(x,t) p(x,t) \right) + \int_t^T \mathcal{M}'(\tau - t) \Delta_x(d(x,\tau)p(x,\tau))d\tau \\
&- 2 (\eta(x,t) p(x,t))_{tt} u(x,t)
=-\chi_{\partial \Omega} [u(x,t) - \tilde{u}(x,t)], ~(x,t) \in \mathbb{R}^n \times (0,T),  \\
& p(x,T)=0~ \text{and} ~ p_t(x,T)=0,~ x \in \mathbb{R}^n.
\end{aligned}
\end{equation}
Then, we have the following characterization of the optimal control through the PMP:
\begin{thm}\label{th:PMP}
The optimal initial condition and adjoint $(u_0^*,p^*)$ satisfies the following PMP criterion 
\[
H(x, u_0^*(x),p^*(x,:)) = \min_{v\in[u_l,u_r]} H(x, v,p^*(x,:)),~ \mbox{ for a.e.  } x \in \Omega.
\]
\end{thm}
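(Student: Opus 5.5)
To prove Theorem~\ref{th:PMP}, I would use the needle-variation technique that underlies the SQH framework. Because the control $u_0$ enters only through the initial condition, the needle variations will be spatial rather than temporal. Fix an optimal pair $(u_0^*,u^*)$, whose existence is given by Theorem~\ref{th:existence}, and let $p^*$ solve the adjoint equation \eqref{eq:adjoint} with $u=u^*$. Take a Lebesgue point $x_0\in\Omega$ of all relevant data, any $v\in[u_l,u_r]$, and a small ball $B_\varepsilon(x_0)$. Define the perturbed control $u_0^\varepsilon = v$ on $B_\varepsilon(x_0)$ and $u_0^\varepsilon=u_0^*$ elsewhere. This $u_0^\varepsilon$ lies in $U_{ad}$, and $\|u_0^\varepsilon-u_0^*\|_{L^1}=O(|B_\varepsilon|)$. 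Optimality gives $J(u_0^\varepsilon)-J(u_0^*)\ge 0$, and the plan is to expand this difference to first order in $|B_\varepsilon|$.

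The first key step is to write $\delta u = u^\varepsilon - u^*$. Subtracting the two state equations, $\delta u$ satisfies the linearized operator $\mathcal L_{u^*}$ (as in Proposition~\ref{prop:observability}), with initial data $\delta u(\cdot,0)=u_0^\varepsilon-u_0^*$ and $\delta u_t(\cdot,0)=0$. The right-hand side is a quadratic remainder $\eta((\delta u)^2)_{tt}$. The stability estimate of Theorem~\ref{th:stab} gives $\|\delta u\|\le C\|u_0^\varepsilon-u_0^*\|_{H^1}$, and I would use it to show that the quadratic remainder is $o(|B_\varepsilon|)$.

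The second step is to expand the fidelity term. Its first-order part is $\int_0^T\int_{\partial\Omega}(u^*-g)\,\delta u$. I would test the linearized equation against $p^*$ and integrate by parts in time and space, using the terminal conditions $p(T)=p_t(T)=0$. The convolution term requires a Fubini swap that turns $\int_0^T \mathcal M*\Delta\delta u_t$ into the reversed-time integral containing $\mathcal M(0)$ and $\mathcal M'(\tau-t)$ in \eqref{eq:adjoint}; the $t=0$ boundary term from this swap produces $-\int_0^T\mathcal M(t)\Delta(d p)\,dt$. After this, the fidelity variation reduces to $\int_\Omega (u_0^\varepsilon-u_0^*)\big[\partial_t p^*(x,0)-\int_0^T\mathcal M\Delta(dp^*)dt\big]dx + o(|B_\varepsilon|)$. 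The regularization terms are pointwise in $u_0$, so adding them shows that $J(u_0^\varepsilon)-J(u_0^*)$ equals $\int_{B_\varepsilon}\big(H(x,v,p^*)-H(x,u_0^*,p^*)\big)dx+o(|B_\varepsilon|)$. Dividing by $|B_\varepsilon|$, letting $\varepsilon\to0$, and applying the Lebesgue differentiation theorem gives $H(x_0,u_0^*(x_0),p^*)\le H(x_0,v,p^*)$. Taking a countable dense set of values $v$ and using continuity of $H$ in $v$ yields the result for a.e.\ $x$.

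I expect the main obstacle to be the remainder estimate. A spatial needle perturbation is small in $L^1$, but its $H^1$ norm is not small: a jump of size $|v-u_0^*|$ on a ball does not even belong to $H^1$. Theorem~\ref{th:stab} in its stated $H^1$ form therefore does not give $o(|B_\varepsilon|)$ directly. My first attempt would be to work in a weaker energy, measuring $\delta u$ in $L^\infty(0,T;L^2)$ through a duality or transposition argument for $\mathcal L_{u^*}$, and to use the $L^\infty$ bounds on $u^*$ together with the nondegeneracy $1-2\eta u^*\ge\kappa$ to control the quadratic term by $\|u_0^\varepsilon-u_0^*\|_{L^2}^2=O(|B_\varepsilon|)$. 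If that fails, I would smooth the needle, mollifying at a scale $\delta\ll\varepsilon$, and pass to the limit afterwards.
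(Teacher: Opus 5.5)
\textbf{The remainder is not $o(|B_\varepsilon|)$.} You flag this step as the main obstacle, but a weaker energy space will not fix it: the remainder is genuinely of order $|B_\varepsilon|$, not $o(|B_\varepsilon|)$. Your fallback bound $C\|u_0^\varepsilon-u_0^*\|_{L^2}^2$ is $O(|B_\varepsilon|)$, the same order as the first-order term. It cannot be improved, because nothing smooths the needle: with a smooth kernel such as the paper's $\mathcal M(t)=0.01e^{-2t}$, the memory term acts at high frequencies only as a bounded damping.

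Concretely, take $c\equiv1$ in $\mathbb R^3$ and data $\chi_{B_\varepsilon(x_0)}$. Kirchhoff's formula gives $u(x,t)=-(t-r)/(2r)$ for $|t-r|<\varepsilon$, with $r=|x-x_0|$, and $u=0$ otherwise. Hence $\int_0^T|u(x,t)|^2\,dt=\varepsilon^3/(6r^2)$ once $T$ exceeds the travel time to $\partial\Omega$. So the quadratic fidelity term $\frac12\|\delta u\|^2_{L^2(\partial\Omega\times(0,T))}$ is bounded below by a fixed multiple of $|B_\varepsilon|$.

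The nonlinear remainder behaves the same way. Testing $\eta((\delta u)^2)_{tt}$ against $p^*$ leaves the initial-time term $-\int_{B_\varepsilon}\partial_t(\eta p^*)(x,0)\,(v-u_0^*)^2\,dx$, because $(\delta u)^2=(\delta u_0)^2$ but $((\delta u)^2)_t=0$ at $t=0$. Mollifying at scale $\delta\ll\varepsilon$ keeps the $L^2$ mass and only inflates the $H^1$ norm.

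The missing idea is convexity of $v\mapsto H(x,v,p)$, which is a convex quadratic on $[u_l,u_r]\subset[0,\infty)$. Use the perturbation $s(v-u_0^*)\chi_{B_\varepsilon}$ with $s\in(0,1]$, whose remainder is $O(s^2|B_\varepsilon|)$. Divide by $s|B_\varepsilon|$, let $\varepsilon\to0$ and then $s\to0^+$ to get a pointwise variational inequality, and conclude the minimum from convexity. This needs only an $L^2$-level Lipschitz/differentiability property of the control-to-trace map, which Theorem~\ref{th:stab} does not supply for non-$H^1$ data.

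The paper's route does not avoid the issue either. Its intermediate adjoint, driven by $\frac{u_\varrho+u}{2}-g$, makes the identity exact. But in the linear case, the error from replacing $p_\varrho$ by $p^*$ in the Hamiltonian, integrated over $S_\varrho$, equals exactly $\frac12\|u_\varrho-u\|^2_{L^2(\partial\Omega\times(0,T))}$. So $L^2$-convergence of $p_\varrho$ does not make it vanish after division by $|S_\varrho|$.

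\textbf{The first-order bookkeeping does not produce $H$ as written.} In $\mathcal L_{u^*}\delta u$, the term $-2\eta(u^*\delta u)_{tt}$, integrated twice by parts in $t$ against $p^*$, leaves $-2\int_\Omega\partial_t(\eta p^*)(x,0)\,u_0^*(x)\,\delta u(x,0)\,dx$. This survives because $u^*\delta u\neq0$ at $t=0$, while $(u^*\delta u)_t=0$ there. The term is first order, linear in $v-u_0^*$, and absent from \eqref{eq:Hamiltonian}. Combined with the remainder term above, the nonlinearity contributes $-\partial_t(\eta p^*)(x_0,0)\big(v^2-u_0^*(x_0)^2\big)$ to the needle limit.

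The memory signs also do not match. For the state equation as written, with $-d\,\mathcal M*\Delta u_t$, the Fubini swap plus one integration by parts in time yields $+\int_0^T\mathcal M(t)\Delta(dp)\,dt$ at $t=0$. In the adjoint it yields $-\mathcal M(0)\Delta(dp)-\int_t^T\mathcal M'(\tau-t)\Delta(dp)(\tau)\,d\tau$. Both are the opposite signs to \eqref{eq:Hamiltonian} and \eqref{eq:adjoint}.

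So the reduction you assert is not correct. Done carefully, and with the convexity fix, your argument gives a minimum principle for a corrected Hamiltonian such as $\frac{\alpha}{2}v^2+\beta|v|+v\big[\partial_tp^*(x,0)+\int_0^T\mathcal M\Delta(dp^*)\,dt-2\partial_t(\eta p^*)(x,0)\,u_0^*(x)\big]$, with the correspondingly corrected adjoint. Its stationarity condition differs from that of $H$ whenever $\partial_t(\eta p^*)(\cdot,0)\,u_0^*\neq0$. The criterion exactly as stated cannot come out of this computation. The paper's proof does not see these terms only because its intermediate adjoint \eqref{eWaveAdjxInter} drops the memory and nonlinear terms altogether.
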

For the proof of this theorem, we employ the classical needle variation argument (see, for example, \cite{Borzi2023}). 
Let \(S_{\varrho}(x_{0})\subset \Omega\) denote an open ball centered at \(x_{0}\in\Omega\) with radius \(\varrho>0\), satisfying
\[
\lim_{\varrho\to 0}|S_{\varrho}(x_{0})|=0.
\]
Given \(u_{0}\in U_{ad}\), we define its needle perturbation at the point \(x_{0}\) by
\begin{equation}
u_0^{\varrho}(x)
:=
\begin{cases}
u_0(x), & x\in \Omega\setminus S_{\varrho}(x_{0}),\\[2mm]
w, & x\in S_{\varrho}(x_{0})\cap\Omega,
\end{cases}
\label{eq:Def of u_k}
\end{equation}
where \(w\in [u_l,u_r]\). By construction, \(u_0^\varrho\in U_{ad}\) for every \(x_{0}\in\Omega\) and every admissible \(u_{0}\) (see \cite{Borzi2023}).
Since \(u_0\in L^\infty(\Omega)\), almost every point in \(\Omega\) is a Lebesgue point of \(u_0\). Hence,
\begin{equation}
\|u_0^{\varrho}-u_0\|_{L^{2}(\Omega)}
=
\left(
\int_{S_{\varrho}(x_{0})}
|w-u_0(x)|^{2}\,{\rm d}x
\right)^{1/2}
\longrightarrow 0,
\qquad \text{as } \varrho\to 0,
\label{limurho}
\end{equation}
for almost every \(x_{0}\in\Omega\).
Combining this convergence with the stability estimates for the attenuated wave equation yields
\begin{equation}
\|u_{\varrho}-u\|_{L^\infty(0,T;L^2(\Omega))}
\to 0,
\qquad
\|p_{\varrho}-p\|_{L^\infty(0,T;L^2(\Omega))}
\to 0,
\qquad
\|\partial_t p_{\varrho}-\partial_t p\|_{L^\infty(0,T;L^2(\Omega))}
\to 0,
\label{limpqrho}
\end{equation}
as \(\varrho\to 0\), where \(u_\varrho\) and \(p_\varrho\) denote the state and adjoint variables associated with the perturbed initial condition \(u_0^\varrho\). In addition,
\begin{equation}
\|\partial_t u_{\varrho}(\cdot,0)-\partial_t u(\cdot,0)\|_{L^2(\Omega)}
\to 0,
\qquad
\|u_{\varrho}(\cdot,0)-u(\cdot,0)\|_{L^2(\Omega)}
\to 0.
\label{limpqrhoBc}
\end{equation}
Assume now that \(u_0\) is an optimal control. We consider the corresponding variation of the objective functional:
\begin{align}
J(u_0^\varrho,u_\varrho)-J(u_0,u)
&=
\int_0^T
\int_{\partial\Omega}
\left(
\frac{u_\varrho(y,t)+u(y,t)}{2}
-g(y,t)
\right)
\big(u_\varrho(y,t)-u(y,t)\big)
\,{\rm d}s(y)\,{\rm d}t
\notag\\
&\quad
+
\int_{\Omega}
\Big(
G(u_0^\varrho(x))-G(u_0(x))
\Big)
\,{\rm d}x,
\label{eDiffJ}
\end{align}
where
\[
G(u_0)
=
\frac{\alpha}{2}u_0^2
+
\beta |u_0|.
\]
To facilitate the analysis, we introduce the following auxiliary adjoint problem:
\begin{equation}\label{eWaveAdjxInter}
\begin{aligned}
\partial_t^2 p_\varrho(x,t)
-
\partial_t\big(\gamma(t)p_\varrho(x,t)\big)
-
\Delta\big(c^2(x)p_\varrho(x,t)\big)
&=
-
\left[
\frac{u_\varrho(x,t)+u(x,t)}{2}
-g(x,t)
\right]
\chi_{\partial\Omega},
\\
&\hspace{-3cm}
(x,t)\in \mathbb{R}^n\times(0,T),
\\[2mm]
p_\varrho(x,T)&=0,
\qquad x\in\mathbb{R}^n,
\\
\partial_t p_\varrho(x,T)&=0,
\qquad x\in\mathbb{R}^n.
\end{aligned}
\end{equation}
Standard regularity theory for \eqref{eWaveAdjxInter} implies
\begin{equation}
p_\varrho
\in
L^\infty(0,T;L^2(\Omega)),
\qquad
\partial_t p_\varrho
\in
L^\infty(0,T;L^2(\Omega))
\cap
L^2(0,T;L^2(\Omega)).
\label{eStimate}
\end{equation}
We now state the following lemma concerning the variation of the functional \(J\) under the needle perturbation.
 {\begin{lem}\label{eDifJDifH}
The following equation holds 
\[
J(u_0^\varrho,u_\varrho) - J(u_0,u)  =  \intL_{\Omega} \left( H\left(x,u_0^{\varrho},p_\varrho\right)-H\left(x,u_0,p_\varrho\right) \right) \, {\rm d}x, 
\]
where $H$ is given as in \eqref{eq:Hamiltonian}.
\end{lem}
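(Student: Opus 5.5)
This is the standard needle-variation identity. The plan is to rewrite the data-misfit part of \eqref{eDiffJ} as a volume integral against the difference of initial data, using the auxiliary adjoint $p_\varrho$ defined with the averaged misfit $\tfrac12(u_\varrho+u)-g$. Set $\delta u := u_\varrho - u$. Subtracting the state equations for $u_\varrho$ and $u$ gives
\[
\delta u_{tt} - c^2\Delta\delta u - d\,\mathcal M*\Delta\delta u_t = \eta\big((u_\varrho+u)\,\delta u\big)_{tt},
\qquad \delta u(\cdot,0)=u_0^\varrho-u_0,\quad \delta u_t(\cdot,0)=0 .
\]
The right-hand side is exactly linear in $\delta u$, because $u_\varrho^2-u^2=(u_\varrho+u)\delta u$. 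This means no Taylor remainder appears, which is why the lemma is an identity rather than an expansion. This is also why the averaged misfit appears in \eqref{eWaveAdjxInter}, and why the adjoint's nonlinear coupling term should carry the average $\tfrac12(u_\varrho+u)$ in place of $u$. I will read $p_\varrho$ as solving the full adjoint operator of \eqref{eq:adjoint} (memory terms and $\eta$-term included) with this averaged coefficient and averaged misfit source.

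The first step is to multiply the $\delta u$ equation by $p_\varrho$ and integrate over $\Omega\times(0,T)$. Then integrate by parts in time twice on $\delta u_{tt}p_\varrho$ and on the term $(\eta(u_\varrho+u)\delta u)_{tt}p_\varrho$. The terminal conditions $p_\varrho(T)=\partial_tp_\varrho(T)=0$ kill the contributions at $t=T$. At $t=0$, using $\delta u_t(0)=0$, the surviving boundary term is $\int_\Omega \partial_t p_\varrho(x,0)\,(u_0^\varrho-u_0)\,dx$, up to sign and an $\eta$-dependent contribution at $t=0$ that I will track and absorb into the Hamiltonian coefficient.

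Next, integrate by parts in space on $c^2\Delta\delta u\,p_\varrho$ to produce $\Delta(c^2p_\varrho)$. For the memory term, apply Fubini to move the convolution onto $p_\varrho$:
\[
\int_0^T\!\!\int_0^t \mathcal M(t-s)\,\Delta\delta u_t(s)\,d(t)p_\varrho(t)\,ds\,dt=\int_0^T \Delta\delta u_t(s)\int_s^T\mathcal M(t-s)\,d(t)p_\varrho(t)\,dt\,ds .
\]
Then integrate by parts in $s$. This yields the adjoint memory terms $\mathcal M(0)\Delta(dp_\varrho)+\int_t^T\mathcal M'(\tau-t)\Delta(dp_\varrho(\tau))\,d\tau$. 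It also yields a boundary term at $s=0$, namely $-\int_\Omega (u_0^\varrho-u_0)\int_0^T\mathcal M(t)\Delta(d\,p_\varrho)\,dt\,dx$, and this is precisely the second piece of the bracket in \eqref{eq:Hamiltonian}.

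By the adjoint equation, all space-time interior terms combine into $-\int_0^T\!\int_{\partial\Omega}\big(\tfrac12(u_\varrho+u)-g\big)\delta u$. This cancels the first line of \eqref{eDiffJ}, leaving
\[
J(u_0^\varrho,u_\varrho)-J(u_0,u)=\int_\Omega\Big[G(u_0^\varrho)-G(u_0)+(u_0^\varrho-u_0)\Big(\partial_tp_\varrho(x,0)-\int_0^T\mathcal M(t)\Delta(d p_\varrho)\,dt\Big)\Big]dx .
\]
Since $H$ is affine in $u_0$ apart from $G$, this equals the claimed $\int_\Omega\big(H(x,u_0^\varrho,p_\varrho)-H(x,u_0,p_\varrho)\big)dx$.

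I expect the main obstacle to be bookkeeping rather than analysis. First, the signs must work out so that the initial-time boundary terms match \eqref{eq:Hamiltonian} exactly. Second, the $t=0$ traces coming from the $(\eta\cdot)_{tt}$ term must either vanish or be absorbed; for instance, they vanish if $\eta(\cdot,0)=0$, or they are folded into a redefined $\partial_tp_\varrho(x,0)$ coefficient. Third, the pairings must be justified at the regularity of \eqref{eStimate}; I would handle this by density, proving the identity for smooth data and passing to the limit with the stability estimate of Theorem~\ref{th:stab}.
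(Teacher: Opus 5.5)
Your route is the paper's route: substitute the averaged-misfit adjoint, integrate by parts, and use $p_\varrho(T)=\partial_tp_\varrho(T)=0$ and $\delta u_t(0)=0$. Your observation that $u_\varrho^2-u^2=(u_\varrho+u)\delta u$ makes the identity exact is correct. The gap is the $t=0$ trace of the nonlinear term, which you treat as bookkeeping. Integrating twice by parts in time (at fixed $x$),
\[
-\int_0^T \eta\,\big((u_\varrho+u)\delta u\big)_{tt}\,p_\varrho\,dt
= -(\eta p_\varrho)_t(x,0)\big((u_0^\varrho)^2-u_0^2\big)-\int_0^T (u_\varrho+u)\,\delta u\,(\eta p_\varrho)_{tt}\,dt .
\]
The other $t=0$ term drops because $\big((u_\varrho+u)\delta u\big)_t(0)=0$. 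The surviving term is quadratic in the initial datum, so it cannot be absorbed into the coefficient of $u_0^\varrho-u_0$. That coefficient would be $(\eta p_\varrho)_t(x,0)(u_0^\varrho+u_0)$, which depends on both controls, so your closing step (``$H$ is affine in $u_0$ apart from $G$'') fails. The term also does not vanish under the standing hypothesis $\eta>0$. Even $\eta(\cdot,0)=0$ would not remove it, since $(\eta p)_t(0)=\eta_t(0)p(0)+\eta(0)\partial_tp(0)$. What your computation actually proves is the identity for the modified Hamiltonian $H(x,v,p)-(\eta p)_t(x,0)\,v^2$. To get the lemma exactly as stated you need an extra hypothesis such as $\eta(\cdot,0)=\eta_t(\cdot,0)=0$.

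The memory term also does not produce the signs you assert. The state operator contains $-d\,\mathcal M*\Delta u_t$. Set $Q(x,s):=\int_s^T\mathcal M(t-s)d(x,t)p_\varrho(x,t)\,dt$, so $Q(\cdot,T)=0$. Your Fubini step followed by integration by parts in $s$ then gives the initial-time term $+\int_\Omega(u_0^\varrho-u_0)\int_0^T\mathcal M(t)\Delta(dp_\varrho)\,dt\,dx$. This has the same sign as the $\partial_tp_\varrho(x,0)$ contribution, and that relative sign does not depend on any convention. The same computation gives interior adjoint terms $-\mathcal M(0)\Delta(dp_\varrho)-\int_t^T\mathcal M'(\tau-t)\Delta(dp_\varrho)(\tau)\,d\tau$. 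The signs in \eqref{eq:adjoint} and \eqref{eq:Hamiltonian} are those for a state equation with $+d\,\mathcal M*\Delta u_t$. So if $p_\varrho$ solves the operator of \eqref{eq:adjoint}, as you propose, the interior memory terms do not cancel. Your claim that the boundary term is ``precisely the second piece of the bracket'' holds only after flipping that sign consistently. The inconsistency comes from the paper: its one-line proof uses \eqref{eWaveAdjxInter}, which contains neither the memory nor the $\eta$ term. A complete proof has to correct it rather than match the target formula.
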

\begin{proof}
Using \eqref{eWaveAdjxInter} in \eqref{eDiffJ}, to replace 
$\Big( \frac{u_\varrho(x,t) 
+ u(x,t)}{2} - g(x,t) \Big)$  with the corresponding 
left-hand side of the intermediate adjoint problem, 
integration by parts and the use of the boundary and initial and terminal conditions give the desired result. 
\end{proof}

Now, we can consider the needle variation in the limit $\varrho \to 0$. We have the following lemma:
\begin{lem}\label{eMon}
Let $u_0^*\in U_{ad}$ be a minimizer and $w\in [u_l,u_r]$. 
Furthermore, let $u_0^{\varrho}$ be defined as in
\eqref{eq:Def of u_k}, and $u_{\varrho}$ be the
solution to the NDVW wave equation \eqref{eq: Main PDE} with $u_0= u_0^{\varrho}$. Then, the following holds
$$
0 \le  \lim_{\varrho\rightarrow 0}\frac{1}{|S_{\varrho}\left(x_0\right)|}   \left(J\left({u_0}^{\varrho},u_{\varrho}\right)-J\left(u_0^*, u^*\right)\right)=  -H\left(x_0,u_0^*,p^*\right)  +H\left(x_0,w,p^*\right) , 
   $$
for almost all $x_0 \in \Omega$ and $w \in [u_l,u_r]$. 
\end{lem}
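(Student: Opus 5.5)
The plan is to combine Lemma~\ref{eDifJDifH} with the needle construction \eqref{eq:Def of u_k} and then pass to the limit using Lebesgue points and the convergence \eqref{limpqrho}.

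First, the sign. Since $u_0^*$ is a minimizer and $u_0^\varrho\in U_{ad}$, we have $J(u_0^\varrho,u_\varrho)-J(u_0^*,u^*)\ge 0$ for every $\varrho>0$. Dividing by $|S_\varrho(x_0)|$ preserves this, which gives the left inequality once the limit is known to exist.

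Next, identify the limit. By Lemma~\ref{eDifJDifH}, and because $u_0^\varrho=u_0^*$ outside $S_\varrho(x_0)$, the difference localizes:
\[
J(u_0^\varrho,u_\varrho)-J(u_0^*,u^*)=\int_{S_\varrho(x_0)}\Big(H(x,w,p_\varrho)-H(x,u_0^*,p_\varrho)\Big)\,{\rm d}x .
\]
Writing $\Pi_\varrho(x):=\partial_t p_\varrho(x,0)-\int_0^T\mathcal M(t)\Delta_x(d(x,t)p_\varrho(x,t))\,{\rm d}t$, the integrand is
\[
G(w)-G(u_0^*(x))+(w-u_0^*(x))\,\Pi_\varrho(x).
\]
I then replace $\Pi_\varrho$ by $\Pi^*$, the same expression built from $p^*$. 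The resulting error is bounded by
\[
\frac{1}{|S_\varrho|}\int_{S_\varrho}|w-u_0^*|\,|\Pi_\varrho-\Pi^*|\,{\rm d}x\le (u_r-u_l)\,\frac{1}{|S_\varrho|}\int_{S_\varrho}|\Pi_\varrho-\Pi^*|\,{\rm d}x ,
\]
using $u_l\le w,u_0^*\le u_r$. Once this error is shown to vanish, the remaining averaged integrand $G(w)-G(u_0^*)+(w-u_0^*)\Pi^*$ lies in $L^1$, because $G$ is continuous, $u_0^*\in L^\infty$, and $\Pi^*\in L^1$. The Lebesgue differentiation theorem then gives convergence to $H(x_0,w,p^*)-H(x_0,u_0^*(x_0),p^*)$ for a.e.\ $x_0$. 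Since $[u_l,u_r]$ is uncountable, I would take the exceptional null set for a countable dense set of $w$ and extend to all $w$ by continuity in $w$; this works because $H$ is Lipschitz in $w$ on $[u_l,u_r]$.

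The main obstacle is the error term. The convergence \eqref{limpqrho} is only in $L^\infty(0,T;L^2)$, so it does not give a pointwise average rate, and $\Pi$ contains $\Delta_x(dp)$, which is not controlled at all. I would first attempt a quantitative bound $\|p_\varrho-p^*\|\le C\|u_0^\varrho-u_0^*\|_{L^2}\le C|S_\varrho|^{1/2}$. This follows from the stability estimate \eqref{eq:stability} applied to the state equation, followed by energy estimates for the adjoint \eqref{eq:adjoint}, whose source depends linearly on $u_\varrho-u^*$; I would assume enough regularity of $p$ that $\partial_t p(\cdot,0)$ and the memory term lie in $H^1$ or $L^2$ with matching Lipschitz dependence. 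Cauchy--Schwarz then bounds the error by $C|S_\varrho|^{-1/2}\|\Pi_\varrho-\Pi^*\|_{L^2}\le C|S_\varrho|^{-1/2}\cdot C|S_\varrho|^{1/2}$, which is only $O(1)$. This is the standard difficulty. To gain the extra factor I would use the structure of the needle: the perturbation of the source is supported in $S_\varrho$ and has $L^1$-size $O(|S_\varrho|)$. A duality argument then gives $\|\Pi_\varrho-\Pi^*\|_{L^\infty}\to0$, or at least a bound of order $o(1)$ on average over $S_\varrho$. If that route fails, the fallback is the usual one in this SQH literature (as in \cite{Borzi2023}): assume the continuity of the adjoint trace in $L^\infty$ under $L^1$-small control changes, and conclude directly.
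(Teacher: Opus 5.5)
\textbf{There is a genuine gap, and it sits in the error term.} Your decomposition is the paper's own: Lemma~\ref{eDifJDifH}, localization to $S_\varrho(x_0)$, replacing $p_\varrho$ by $p^*$, and Lebesgue differentiation for the main term. The paper then disposes of the replacement error by citing \eqref{limpqrho} and a ``mean value theorem''. You correctly see that convergence in $L^\infty(0,T;L^2)$ cannot control an average over a set of measure $|S_\varrho|$. But you leave that step open, and it cannot be closed as you propose, because the error term does not tend to zero.

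Take $\eta=0$. Then the data map $\mathcal A:u_0\mapsto u|_{\partial\Omega\times(0,T)}$ is linear, and Lemma~\ref{eDifJDifH} reduces to the identity $\frac12\|a-g\|^2-\frac12\|b-g\|^2=\langle \frac{a+b}{2}-g,\,a-b\rangle$. The coupling term in $H$ must then represent $\mathcal A^*$ applied to the adjoint source. Hence $\Pi_\varrho-\Pi^*=\frac12\mathcal A^*\mathcal A\delta_\varrho$ with $\delta_\varrho=(w-u_0^*)\chi_{S_\varrho(x_0)}$, and your error term is exactly
\[
\frac{1}{|S_\varrho|}\int_{S_\varrho}(w-u_0^*)(\Pi_\varrho-\Pi^*)\,{\rm d}x=\frac{\|\mathcal A\delta_\varrho\|^2_{L^2(\partial\Omega\times(0,T))}}{2\,|S_\varrho(x_0)|}\ \ge 0 .
\]
This vanishes only if $\mathcal A$ maps the $L^2$-normalized bumps $\delta_\varrho/\|\delta_\varrho\|_{L^2}$, which tend weakly to $0$, to small functions. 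That essentially requires $\mathcal A$ to be compact, and it is not. For a kernel with $\mathcal M(0)<\infty$ one has $\mathcal M*\Delta u_t=\mathcal M(0)\Delta u-\mathcal M(t)\Delta u_0+\mathcal M'*\Delta u$. So the equation stays hyperbolic, there is no smoothing, and singularities reach $\partial\Omega$ with only exponential damping.

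A model case makes this explicit: $\mathbb R^3$, $c\equiv1$, no memory. Kirchhoff's formula gives the N-wave $u(y,t)=-(t-r)/(2r)$ for $|t-r|<\varrho$ and $u=0$ otherwise, where $r=|y-x_0|$. For $T>\max_{y\in\partial\Omega}|y-x_0|+\varrho$ this yields
\[
\frac{\|\mathcal A\chi_{S_\varrho(x_0)}\|^2}{|S_\varrho(x_0)|}=\frac{1}{8\pi}\int_{\partial\Omega}\frac{{\rm d}s(y)}{|y-x_0|^{2}}>0 ,
\]
independently of $\varrho$.

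\textbf{Consequences for your remedies and for the lemma.} At Lebesgue points, the limit exceeds $H(x_0,w,p^*)-H(x_0,u_0^*,p^*)$ by $\frac{\kappa(x_0)}{2}(w-u_0^*(x_0))^2$ with $\kappa(x_0)>0$. So the claimed equality fails whenever $w\neq u_0^*(x_0)$, and only the trivial inequality $0\le\cdot$ survives. In particular:
\begin{itemize}
\item $\Pi_\varrho-\Pi^*$ stays of order one on $S_\varrho$, so no duality argument can give $\|\Pi_\varrho-\Pi^*\|_{L^\infty}\to0$.
\item Your fallback hypothesis, ``$L^\infty$ continuity of the adjoint under $L^1$-small changes'', is what parabolic smoothing supplies in the SQH literature. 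It is false for this equation.
\end{itemize}

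What you actually need downstream is the pointwise minimum in Theorem~\ref{th:PMP}, and that can still be obtained because $H(x,\cdot,p)$ is convex. Use the convex variation $u_0^*+s\,\delta_\varrho\in U_{ad}$ with $s\in(0,1]$ instead of the needle itself. Its quadratic remainder is $\frac{s^2}{2}\|\mathcal A\delta_\varrho\|^2$, or $O(s^2)$ when $\eta\neq0$ provided the control-to-state map is differentiable. Convexity of $G$ then gives
\[
0\le\int_{S_\varrho}\big(H(x,w,p^*)-H(x,u_0^*,p^*)\big)\,{\rm d}x+O(s).
\]
Let $s\to0$ first, and only then divide by $|S_\varrho|$ and let $\varrho\to0$.
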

\begin{proof}
For any $k\in\mathbb{N}$, we have
\[
\begin{aligned}
0 \leq &\frac{1}{|S_{\varrho}\left(x_0\right)|}   \left(J\left(u_0^{\varrho},u_{\varrho}\right)-J\left(u_0^*,u^*\right)\right)\\
=& \frac{1}{|S_{\varrho}\left(x_0\right)|}  \intL_\Omega \Big( -H\left(x,u_0^*,p_\varrho\right)+H\left(x,u_0^{\varrho},p_\varrho\right) \, {\rm d}x,\\
= & \frac{1}{|S_{\varrho}\left(x_0\right)|}  \intL_{S_{\varrho}\left(x_0\right)}
\Big( -H\left(x,u_0^*,p^*\right)  +H\left(x,w,p^*\right) \Big) \,{\rm d} x\\
&+\frac{1}{|S_{\varrho}\left(x_0\right)|}  \intL_{S_{\varrho} \left(x_0\right)} \Big( (u_0^*-w)[(\partial_tp_{\varrho}(x,0)-\partial_tp^*(x,0)) -\int_0^T \mathcal{M}(t) \Delta_x (d(x,t) (p_\varrho(x,0)-p^*(x,0))) \, dt] \Big) \, {\rm d}x.
\end{aligned}
\]
Since, $x \in \Omega \mapsto \Big( H\left(x,u_0^*,p^*\right)  -H\left(x,w,p^*\right) \Big) \in L^1(\Omega)$, convergence results for $u_0^\varrho$, $u_\varrho$, and $p_\varrho$ in \eqref{limurho} and \eqref{limpqrho}, we have the following using the mean value theorem: 
\[
0 \le  \lim_{\varrho\rightarrow 0}\frac{1}{|S_{\varrho}\left(x_0\right)|}   \left(J\left(u_0^{\varrho},u_{\varrho}\right)-J\left(u_0^*, u^*\right)\right)
=  -H\left(x_0,u_0^*,p^*\right)  +H\left(x_0,w,p^*\right).
\]
\end{proof}
As a consequence of Lemma \ref{eMon}, we have proved the PMP Theorem \ref{th:PMP}. A major advantage of the PMP framework is that the resulting optimality characterization avoids the need for derivatives of the objective functional \(J\) with respect to the initial condition \(u_0\), in contrast to classical first-order optimality conditions derived through the Euler--Lagrange framework. This observation motivates the development of the sequential quadratic Hamiltonian (SQH) method for the numerical implementation of the PMP condition stated in Theorem~\ref{th:PMP}. 

The SQH method, introduced in its modern form in \cite{Borzi2023}, represents a recent advancement in the class of successive approximation (SA) schemes. The central idea of these methods is the iterative pointwise minimization of the Hamiltonian associated with the underlying optimization problem. The SQH method is based on the augmented Hamiltonian functional defined by
\[
H_\epsilon(x,u_0,\tilde{u}_0,p)
=
H(x,u_0,p)
+
\epsilon (u_0-\tilde{u}_0)^2,
\label{eq:augmented_Hamiltonian}
\]
where \(\epsilon>0\) is a penalization parameter that is adaptively updated during the SQH iterations. In particular, \(\epsilon\) is increased whenever a sufficient decrease in the objective functional \(J\) is not achieved, and decreased otherwise. Here, \(\tilde{u}_0\) denotes the previous iterate of the initial condition \(u_0\). The additional quadratic penalty term
\[
\epsilon (u_0-\tilde{u}_0)^2
\]
acts as a regularization mechanism that restricts successive updates of \(u_0\) from deviating excessively from the previous iterate, especially for large values of \(\epsilon\). It is important to emphasize that, during each optimization sweep over the spatial grid points \(x\), the state and adjoint variables \(u\) and \(p\) are kept fixed and correspond to the solutions obtained from the previous iteration. The resulting SQH procedure is summarized in the following algorithm:
\begin{algorithms}[SQH method]\text{ }
\begin{itemize}
\item Input:  initial approx. $u_0^0$, max. number of iterations $k_{max}$, tolerance $\kappa >0$, 
$\epsilon>0$, $\lambda >1$, $\eta > 0$, and $\zeta\in\left(0,1\right)$; 
set $\tau > \kappa $, $k:=0$.
\item Compute the solution $p^0$ to the damped wave equation given in \eqref{eq:minproblem} with initial condition $u_0=u_0^0$. 
\item While ($k<k_{max} ~\&\& ~ \tau > \kappa$ ) do 
\begin{enumerate}[(a)]
\item Compute the solution $p^k$ to the adjoint problem \eqref{eq:adjoint} with $u=u^k$. 
\item Determine $u_0^{k+1}$ such that the following optimization problem is satisfied 
$$
{H}_\epsilon \left(x,u_0^{k+1},u_0^k, p^k\right)
= \min_{v\in [u_l,u_r]} {H}_\epsilon \left(x,v,u_0^k, p^k\right), 
$$
at almost all $x \in \Omega$.
\item Compute the solution $u^{k+1}$ to the damped wave equation given in \eqref{eq:minproblem} with initial condition $u_0=u_0^{k+1}$
\item Compute $\tau:=\|u_0^{k+1 } -u_0^{k}\|^2_{L^{2}\left(\Omega\right)}$.
\item If $J\left(u_0^{k+1},u^{k+1}\right)-J\left(u_0^{k},u^{k}\right)> -\eta \, \tau$, 
then increase $\epsilon$ with $\epsilon=\lambda \, \epsilon$ and go to Step (b).\\
Else if 
$J\left(u_0^{k+1},u^{k+1}\right)-J\left(u_0^{k},u^{k}\right) \leq -
\eta \, \tau $, then 
decrease $\epsilon$ with $\epsilon=\zeta \, \epsilon$ and continue. 
\item Set $k:=k+1$. 
\end{enumerate}
\item end While
\end{itemize}
\label{algSQHmethod}
\end{algorithms}

In Step {\it (e)} of this algorithm, if the inequality $J\left(u_0^{k+1},u^{k+1}\right)-J\left(u_0^{k},u^{k}\right)> -\eta \, \tau$ holds, it indicates that a sufficient decrease in the objective functional $J$ has not been achieved. In such a case, $\epsilon$ is increased (since $\lambda > 1$), and the optimization in Step {\it (b)} is repeated with the updated augmented Hamiltonian function. In contrast, if the inequality does not hold, it confirms that the required reduction in $J$ has been achieved. The updated initial condition $u_0^{k+1}$ is then adopted, together with the corresponding updates $u^{k+1}$ and $p^{k+1}$ for the damped wave equation and its adjoint. In this situation, $\epsilon$ is reduced by a factor $\zeta < 1$.

\begin{thm} \label{th:SQH} Let $\left(u_0^{k},u^{k}\right)$ and $\left(u_0^{k+1},u^{k+1}\right)$ be generated by the SQH method (Algorithm \ref{algSQHmethod}) applied to \eqref{eq:minproblem}, with $u_0^{k+1}, u^k \in U_{ad}$. Then, there exists a constant $C > 0$, independent of $\epsilon$ and $u_0^k$, such that for the current value of $\epsilon > 0$ chosen by Algorithm \ref{algSQHmethod}, the following inequality holds:
\begin{equation}\label{eq:ineq}
J\left(u_0^{k+1},u^{k+1}\right) - J\left(u_0^{k},u^{k}\right) \leq -\left(\epsilon - C\right) \, \| u_0^{k+1} - u_0^k \|^{2}_{L^2\left(\Omega\right)}.
\end{equation}
In particular, this implies $J\left(u_0^{k+1},u^{k+1}\right) - J\left(u_0^{k},u^{k}\right) \leq - \eta \, \tau$ for $\epsilon \geq C + \eta$ and $\tau = \| u_0^{k+1} - u_0^k \|^{2}_{L^2\left(\Omega\right)}$. \end{thm}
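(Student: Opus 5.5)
The plan is the standard SQH sufficient-decrease argument, following Breitenbach--Borz\`{\i}. First I use Lemma \ref{eDifJDifH}, with the pair $(u_0^{k+1},u_0^k)$ in place of $(u_0^\varrho,u_0)$. This expresses the change in the cost as
\[
J(u_0^{k+1},u^{k+1})-J(u_0^{k},u^{k})=\int_\Omega \Big(H(x,u_0^{k+1},\hat p)-H(x,u_0^{k},\hat p)\Big)\,{\rm d}x ,
\]
where $\hat p$ is the intermediate adjoint of \eqref{eWaveAdjxInter}. In that adjoint the source is built from the average $\tfrac12(u^{k+1}+u^k)$. I then split $\hat p=p^k+(\hat p-p^k)$, where $p^k$ is the adjoint computed in Step (a). Since $H$ is affine in $p$ through the bracket $\partial_t p(x,0)-\int_0^T\mathcal M(t)\Delta_x(d\,p)\,dt$, this gives
\[
J^{k+1}-J^k=\int_\Omega \big(H(x,u_0^{k+1},p^k)-H(x,u_0^k,p^k)\big)\,{\rm d}x+\int_\Omega (u_0^{k+1}-u_0^k)\,\mathcal B(\hat p-p^k)\,{\rm d}x ,
\]
where $\mathcal B$ denotes that bracket.

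Second, I use the pointwise optimality of Step (b). Because $u_0^{k+1}(x)$ minimizes $H_\epsilon(x,\cdot,u_0^k,p^k)$ over $[u_l,u_r]$, comparing with the admissible value $v=u_0^k(x)$ gives
\[
H(x,u_0^{k+1},p^k)+\epsilon(u_0^{k+1}-u_0^k)^2\le H(x,u_0^k,p^k)\quad\text{for a.e. } x .
\]
Integrating, the first integral above is bounded by $-\epsilon\|u_0^{k+1}-u_0^k\|_{L^2}^2$.

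Third, I bound the remainder by $C\|u_0^{k+1}-u_0^k\|^2_{L^2}$, and this is the main obstacle. By Cauchy--Schwarz it suffices to show
\[
\|\mathcal B(\hat p-p^k)\|_{L^2(\Omega)}\le C\|u_0^{k+1}-u_0^k\|_{L^2(\Omega)} .
\]
I would argue in two steps.
\begin{itemize}
\item The difference $\hat p-p^k$ solves the (linear, backward) adjoint equation. Its source is $-\tfrac12(u^{k+1}-u^k)\chi_{\partial\Omega}$, plus lower-order terms coming from the $u$-dependent coefficient $2(\eta p)_{tt}u$. Backward energy estimates for this equation, obtained as in Theorem \ref{th:linearpde} and Theorem \ref{th:stab} after the time reversal $t\mapsto T-t$, control $\hat p-p^k$, together with $\partial_t(\hat p-p^k)(\cdot,0)$ and the needed spatial derivatives, by the trace norm $\|u^{k+1}-u^k\|_{L^2(0,T;H^{1/2}(\partial\Omega))}$.
\item The stability estimate \eqref{eq:stability} of Theorem \ref{th:stab}, with identical boundary lifting and $u_1=0$, bounds $u^{k+1}-u^k$ by $\|u_0^{k+1}-u_0^k\|$.
\end{itemize}
The delicate points are these. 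The bracket contains $\Delta_x(d\,p)$, so I must either assume enough regularity of the adjoint, uniformly on the bounded set $U_{ad}$, or read $\mathcal B$ in a weak sense. Also, \eqref{eq:stability} is stated in $H^1$, so I would invoke boundedness of $U_{ad}\subset L^\infty$ and the a priori bound $\|u\|_{X_T}\le R$ from Theorem \ref{th:NDVW} to work with the $L^2$-level version of the same Gr\"onwall argument. In either case the constant $C$ depends only on $T$, the coefficients and $u_l,u_r$. It does not depend on $\epsilon$ or on $u_0^k$, as the statement requires.

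Combining the three steps gives $J^{k+1}-J^k\le-(\epsilon-C)\|u_0^{k+1}-u_0^k\|^2_{L^2}$, which is \eqref{eq:ineq}. Taking $\epsilon\ge C+\eta$ then yields $J^{k+1}-J^k\le-\eta\tau$ with $\tau=\|u_0^{k+1}-u_0^k\|^2_{L^2}$. This shows that the $\epsilon$-increase loop in Step (e) terminates after finitely many enlargements.
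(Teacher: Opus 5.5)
Your argument is correct at the paper's level of rigor and shares its skeleton: Step~(b) tested with $v=u_0^k$ gives \eqref{ePartialMax}. Where you differ is in how the remainder is represented.

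\textbf{The paper's route.} The paper does not use Lemma~\ref{eDifJDifH} here. It expands the misfit exactly around $u^k$. With $l(u_0)=\tfrac\alpha2u_0^2+\beta|u_0|$ and $\delta u_0=u_0^{k+1}-u_0^k$,
\[
J^{k+1}-J^k=\tfrac12\int_0^T\!\!\int_{\partial\Omega}(u^{k+1}-u^k)^2+\int_0^T\!\!\int_{\partial\Omega}(u^k-g)(u^{k+1}-u^k)+\int_\Omega\big(l(u_0^{k+1})-l(u_0^k)\big).
\]
It turns the middle term into $\int_\Omega\delta u_0\,\mathcal B(p^k)$ using the Step~(a) adjoint. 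It bounds the first term by forward stability alone, so no adjoint estimate is ever needed.

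\textbf{Your route.} Splitting $\hat p=p^k+q$ with $q=\hat p-p^k$ leaves $\int_\Omega\delta u_0\,\mathcal B(q)$. This forces you to control $\partial_t q(\cdot,0)$ and $\Delta_x(d\,q)$ in $L^2(\Omega)$.

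Your first bullet, as written, would not deliver that. The source $\tfrac12(u^{k+1}-u^k)\chi_{\partial\Omega}$ is concentrated on $\partial\Omega$. Backward energy estimates of the type in Theorems~\ref{th:linearpde} and \ref{th:stab} therefore give nothing close to $\Delta_x(d\,q)\in L^2(\Omega)$.

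The workable version is your ``weak sense'' option: pair by duality instead of using Cauchy--Schwarz. For a linear state equation this gives exactly
\[
\int_\Omega\delta u_0\,\mathcal B(q)\,dx=\tfrac12\int_0^T\!\!\int_{\partial\Omega}(u^{k+1}-u^k)^2,
\]
which is the paper's quadratic term, so the two routes then coincide.

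\textbf{What your route buys.} It treats the nonlinearity more honestly. Because $u^2$ is quadratic, $u^{k+1}-u^k$ solves exactly the equation linearized at $\bar u=\tfrac12(u^{k+1}+u^k)$.

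Take $\hat p$ to solve \eqref{eq:adjoint} with $u$ replaced by $\bar u$. Do not use \eqref{eWaveAdjxInter} as literally written, since that is the adjoint of a memory-free linear damped model. With this choice, Lemma~\ref{eDifJDifH} becomes an exact identity. The nonlinearity then appears as the explicit extra source $(\eta\hat p)_{tt}(u^{k+1}-u^k)$ in the equation for $q$. This term involves $\hat p_{tt}$, so it is not really lower order.

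The paper's conversion of the middle term treats the state equation as linear. It silently drops a remainder quadratic in $u^{k+1}-u^k$, of the form $\int_0^T\!\int p^k\,\eta\,\big((u^{k+1}-u^k)^2\big)_{tt}$ up to sign. That remainder would also have to be bounded by $C\|u_0^{k+1}-u_0^k\|^2_{L^2}$.

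\textbf{A shared gap.} Both routes ultimately rest on the trace bound
\[
\|u^{k+1}-u^k\|_{L^2(\partial\Omega\times(0,T))}\le C\|u_0^{k+1}-u_0^k\|_{L^2(\Omega)}.
\]
Neither Theorem~\ref{th:stab}, which is an $H^1$ estimate, nor an $L^2$-level Gr\"onwall argument controls a boundary trace in terms of $L^2$ initial data. Your second bullet therefore has the same norm mismatch that the paper covers with ``the standard stability estimate.''
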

\begin{proof}
 We have the following Hamiltonian function 
$$
H\left(x,u_0(x),p(x)\right) = \dfrac{\alpha}{2} u_0^2(x) + \beta  |u_0(x)| + u_0(x) \left[ \partial_tp(x,0)-\int_0^T \mathcal{M}(t) \Delta_x (d(x,t) p(x,t)) \, dt\right].
$$ 
In Step {\it (b)} of Algorithm \ref{algSQHmethod}, we have that for almost all $x \in \Omega$ it holds:
$$
{H}_\epsilon \left(x,u_0^{k+1},u_0^k, p^k\right)
\leq {H}_\epsilon \left(x,v,u_0^k, p^k\right) , 
$$ 
for all $v\in [u_l,u_r]$. Therefore, we have
\begin{equation*}
{H}_\epsilon \left(x,u_0^{k+1},u_0^k, p^k\right)
\leq {H}_\epsilon \left(x,u_0^k,u_0^k, p^k\right)={H} \left(x,u_0^k, q^k\right) .
\end{equation*}
Hence, we obtain the inequality
\begin{equation}\label{ePartialMax}
{H} \left(x,u_0^{k+1}, p^k\right)+\epsilon \, | u_0^{k+1} - u_0^k |^2 \le {H} \left(x,u_0^k, p^k\right) .
\end{equation}
Define $\delta p = p^{k+1}-p^k$, $l(u_0) = \dfrac{\alpha}{2} u_0^2 + \beta |u_0|$
and  $\delta u_0 = u_0^{k+1}-u_0^k$. We have 
\begin{align*}
&J(u_0^{k+1},p^{k+1}) - J(u_0^{k},p^{k}) \\
&\quad=
 \dfrac{1}{2}\intL_0^T\intL_{\partial\Omega} 2(p^k(y,t) - g(y,t))\delta p(y,t) + (\delta p(y,t))^2 ~{\rm d}s(y){\rm d}t + \intL_\Omega l(u_0^{k+1})(x)-l(u_0^k) (x)~{\rm d}x \\
&\quad=\dfrac{1}{2}\intL_0^T\intL_{\partial\Omega} (\delta p(y,t))^2 ~{\rm d}s(y){\rm d}t + \intL_0^T\intL_{\partial\Omega} (p^k(y,t) - g(y,t))\delta p(y,t) ~{\rm d}s(y){\rm d}t\\
&\quad\qquad\qquad\qquad\qquad\qquad + \intL_\Omega l(u_0^{k+1})(x)-l(u_0^k)(x) ~{\rm d}x\\
&  \quad=\dfrac{1}{2}\intL_0^T\intL_{\partial\Omega} (\delta p(y,t))^2 ~{\rm d}s(y){\rm d}t + \intL_{\Omega} [\partial_tp^k(x,0)-\gamma(0)p^k(x,0)]\delta u_0(x) ~{\rm d}x \\
&\quad\qquad\qquad\qquad\qquad\qquad+ \intL_\Omega l(u_0^{k+1})(x)-l(u_0^k)(x) ~{\rm d}x\\
&\quad=\dfrac{1}{2}\intL_0^T\intL_{\partial\Omega} (\delta p(y,t))^2 ~{\rm d}s(y){\rm d}t + \intL_{\Omega} {H} \left(x,u_0^{k+1}, p^k\right) - {H} \left(x,u_0^{k}, p^k\right) ~{\rm d}x \\
&\quad \le 
C \,  \| u_0^{k+1} - u_0^k \|^2_{L^2(\Omega)}  - \epsilon \,  \| u_0^{k+1} - u_0^k \|^2_{L^2(\Omega)} ,
\end{align*}
where the last step follows from the equation following \eqref{ePartialMax} and the standard stability estimate of solutions of \eqref{eq:main-pde}. 
\end{proof}

This theorem shows that, if $(u_0^k, p^k)$ are not already optimal, 
it is possible to choose $\epsilon > C$ to obtain a successful minimization step. We have the following corollary as a consequence of Theorem \ref{th:SQH}.

\begin{cor} \label{CorollaryEps}
The sequence $\lbrace\epsilon\rbrace$ of the SQH iterates is bounded.
\end{cor}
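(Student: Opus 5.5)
The plan is to combine the sufficient-decrease inequality \eqref{eq:ineq} of Theorem~\ref{th:SQH} with the update rule for $\epsilon$ in Step (e) of Algorithm~\ref{algSQHmethod}. The key point is that the constant $C$ in Theorem~\ref{th:SQH} does not depend on $\epsilon$ or on the current iterate $u_0^k$. It depends only on the stability constants of the forward problem (Theorem~\ref{th:stab}) and the admissible bounds $u_l,u_r$.

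\textbf{Step 1: a uniform threshold.} Fix the threshold $\epsilon^\ast := C+\eta$. By Theorem~\ref{th:SQH}, whenever the current penalization satisfies $\epsilon\ge\epsilon^\ast$, the minimizer of $H_\epsilon$ computed in Step (b) gives
\[
J(u_0^{k+1},u^{k+1})-J(u_0^k,u^k)\le-(\epsilon-C)\tau\le-\eta\tau .
\]
So the sufficient-decrease test in Step (e) is passed, and $\epsilon$ is not increased any further. This holds at every iteration $k$, because $C$ is independent of $k$.

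\textbf{Step 2: induction on the updates.} Let $\epsilon_0$ be the initial value. Within one outer iteration, $\epsilon$ is multiplied by $\lambda>1$ only while the test fails. By Step 1, the test can only fail when $\epsilon<\epsilon^\ast$. Hence after the last increase we have $\epsilon<\lambda\epsilon^\ast$, so the inner loop terminates after finitely many steps.

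Outside the inner loop, $\epsilon$ is only multiplied by $\zeta<1$, which decreases it. Inducting over all updates, every value ever taken satisfies
\[
\epsilon\le\max\{\epsilon_0,\lambda(C+\eta)\}.
\]
Together with positivity ($\epsilon>0$ is preserved by multiplication by $\lambda$ and $\zeta$), this proves that the sequence is bounded.

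\textbf{Main obstacle.} The delicate point is justifying that $C$ is genuinely uniform in $k$. This requires that all iterates $u_0^k$ stay in $U_{ad}$, which is guaranteed by the pointwise minimization over $[u_l,u_r]$. It also requires that the stability estimate and the bound on $\tfrac12\int\!\!\int(\delta u)^2$ in terms of $\|\delta u_0\|^2_{L^2}$ hold uniformly on the bounded set $U_{ad}$. I would invoke Theorem~\ref{th:stab} restricted to that bounded set, where the smallness condition on $\eta A$ is assumed to hold uniformly.

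A degenerate case must also be handled: $\tau=0$. Then $u_0^{k+1}=u_0^k$, the test passes trivially, and the algorithm stops. So this case does not cause $\epsilon$ to increase.
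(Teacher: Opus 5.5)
Your argument is correct and is essentially the paper's proof. Both rest on the fact that, by Theorem~\ref{th:SQH}, the sufficient-decrease test in Step (e) is automatically passed once $\epsilon\ge C+\eta$, with $C$ independent of $\epsilon$ and $u_0^k$. Your version is more precise than the paper's, which only asserts an unspecified bound $\bar\epsilon$ and loosely says successful steps use $\epsilon=C+\eta$: you account for the overshoot factor $\lambda$ and the initial value, giving the explicit bound $\max\{\epsilon_0,\lambda(C+\eta)\}$. The uniformity of $C$ that you flag as the main obstacle is already part of the statement of Theorem~\ref{th:SQH}, so nothing more is needed at the level of the corollary.
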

\begin{proof}
From Algorithm \ref{algSQHmethod}, we note that the successful 
$k$-th minimization step is performed with $\epsilon= \, (C + \eta)$, 
where $C$ is estimated in Theorem \ref{th:SQH} above. 
Then, there is a constant $\bar{\epsilon} $ related to the stability estimate of the attenuated wave equation \eqref{eq:main-pde}, 
the data of the problem, and the regularization and SQH parameters, that 
provides an upper bound $\epsilon \le \bar{\epsilon}$ of the sequence $\lbrace\epsilon\rbrace$ of the SQH iterates.
\end{proof}
This corollary ensures that the SQH algorithm converges, which is what we prove in the next theorem.
\begin{thm}
\label{Tsqh2}
Under the assumptions of Theorem \ref{th:SQH}, if in Algorithm \ref{algSQHmethod}, at every $k$-th iterate, $\epsilon=C+\eta$ 
is chosen (and tolerance  $\kappa=0$, so that the algorithm never stops), then the following holds
\begin{enumerate}[(a)]
\item $\lim\limits_{k \to \infty} | J(u_0^{k+1},p^{k+1}) - J(u_0^{k},p^k)|=0$
	\label{sqhac}
\item $\lim\limits_{k \to \infty} \| u_0^{k+1} - u_0^{k} \|_{ L^2 \left( \Omega \right) }=0$. 
	\label{sqhbc}
\item Along a subsequence, $
u_0^k \rightharpoonup \bar u_0,  
~ \text{in } L^2(\Omega), $
with corresponding state and adjoint limits
\[
u^k \to \bar u, \quad \text{in } L^2(0,T; L^2(\Omega)) 
\qquad
p^k \to \bar p, \quad \text{in } L^2(0,T; L^2(\Omega)), 
\]
and the pair $(\bar u_0,\bar p)$ satisfies the PMP criterion
\[
H(x,\bar u_0(x),\bar p(x,:))
=
\min_{v\in [u_l,u_r]}
H(x,v,\bar p(x,:)),
\qquad \text{for a.e. } x\in \Omega .
\]
\end{enumerate}
\end{thm}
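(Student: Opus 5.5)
The natural route is the standard SQH convergence argument, resting on the sufficient-decrease inequality \eqref{eq:ineq} of Theorem \ref{th:SQH}. With $\epsilon = C+\eta$ fixed at every iterate, \eqref{eq:ineq} gives
\[
J(u_0^{k+1},u^{k+1}) - J(u_0^{k},u^{k}) \le -\eta\,\|u_0^{k+1}-u_0^k\|_{L^2(\Omega)}^2 \le 0 .
\]
So $\{J(u_0^k,u^k)\}$ is monotonically nonincreasing. By Proposition \ref{th:coercive} it is bounded from below, since $J\ge 0$. It therefore converges to some $\bar J$, and consecutive differences tend to zero, which is (a).

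For (b), I sum the decrease inequality over $k=0,\dots,K$ and telescope:
\[
\eta\sum_{k=0}^{K}\|u_0^{k+1}-u_0^k\|^2_{L^2(\Omega)} \le J(u_0^0,u^0)-\bar J<\infty .
\]
The series converges, so its terms tend to zero.

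For (c), all iterates lie in $U_{ad}$. This set is bounded in $L^\infty(\Omega)$, hence in $L^2(\Omega)$, and it is convex and closed. A subsequence therefore converges weakly to some $\bar u_0\in U_{ad}$. For the states, I use the uniform energy bounds from the well-posedness analysis (Theorems \ref{th:NDVW} and \ref{th:linearpde}): the states $u^k$ are bounded in $X_T$, with $u^k_{tt}$ bounded in $L^2(0,T;H^{-1})$. Aubin--Lions then gives strong convergence $u^k\to\bar u$ in $L^2(0,T;L^2(\Omega))$, exactly as in the proof of Theorem \ref{th:existence}. Passing to the limit in the weak form, including the quadratic term $\eta(u^2)_{tt}$ via strong convergence, identifies $\bar u=\Lambda(\bar u_0)$. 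The same compactness argument, applied to the linear adjoint equation \eqref{eq:adjoint}, gives $p^k\to\bar p$ with $\bar p$ the adjoint of $\bar u$. That equation has coefficient $u^k$ and source $-\chi_{\partial\Omega}(u^k-g)$, both converging strongly.

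To pass to the limit in the PMP, I use the Step (b) inequality. For every $v\in[u_l,u_r]$ and a.e. $x$,
\[
H(x,u_0^{k+1},p^k)+\epsilon|u_0^{k+1}-u_0^k|^2\le H(x,v,p^k)+\epsilon|v-u_0^k|^2 .
\]
I integrate this against an arbitrary nonnegative test function $\psi\in L^\infty(\Omega)$ and let $k\to\infty$ along the subsequence. By (b), $u_0^{k+1}$ and $u_0^k$ share the same weak limit, and the term $\epsilon|u_0^{k+1}-u_0^k|^2$ vanishes in $L^1$. The Hamiltonian's coupling term $u_0\,[\partial_t p(x,0)-\int_0^T\mathcal M\Delta(dp)\,dt]$ pairs a weakly convergent factor with one that must converge strongly. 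The convex part $\frac{\alpha}{2}u_0^2+\beta|u_0|$ is handled by weak lower semicontinuity of convex integrands. This yields $\int\psi\, H(x,\bar u_0,\bar p)\le\int\psi\,[H(x,v,\bar p)+\epsilon|v-\bar u_0|^2]$.

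A weak limit alone leaves a spurious $\epsilon|v-\bar u_0|^2$ term. To remove it, I exploit pointwise convexity of $H_\epsilon$ in $u_0$ and take $v=\bar u_0+s(w-\bar u_0)$ with $s\downarrow 0$. The quadratic term is $O(s^2)$, so dividing by $s$ gives the variational inequality for the convex function $H(x,\cdot,\bar p)$. Convexity then upgrades this to global minimality over $[u_l,u_r]$ for a.e. $x$.

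The main obstacle I expect is the strong convergence of the adjoint quantities appearing in $H$, namely $\partial_t p^k(\cdot,0)$ and $\int_0^T\mathcal M\,\Delta(d\,p^k)\,dt$. Convergence of $p^k$ in $L^2(0,T;L^2)$ alone is too weak for these traces and second derivatives. I would try to obtain it from the stability estimate Theorem \ref{th:stab} applied to the adjoint equation. That would give continuous dependence of $(p,p_t)$ in $C([0,T];H^1)\times C([0,T];L^2)$ on the source, which converges strongly because $u^k\to\bar u$. The Laplacian term would be interpreted in $H^{-1}$ and paired with $u_0$ through the test function. If this fails, I would restrict the PMP limit to Lebesgue points and use the needle-variation convergences \eqref{limpqrho} instead.
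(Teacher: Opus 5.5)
Your parts (a) and (b) are exactly the paper's argument: monotone decrease from \eqref{eq:ineq} with $\epsilon=C+\eta$, then telescoping. Your skeleton for (c) is also the paper's. You are in fact more careful than the paper, which treats Step (b) as a minimization of $H$ instead of $H_\epsilon$ and then invokes ``continuity of the Hamiltonian'' under weak convergence. But your removal of the $\epsilon$-term has a genuine gap.

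On the right-hand side you have $\epsilon|v-u_0^k|^2$ with $v$ independent of $k$. Under weak convergence $\int\psi\,(u_0^k)^2$ is only lower semicontinuous, which is the wrong direction for an upper bound. In general $\int\psi\,\epsilon|v-u_0^k|^2\to\int\psi\,\epsilon\bigl(|v-\bar u_0|^2+\delta\bigr)$, where $\delta\ge0$ is the weak-$*$ limit of $(u_0^k)^2$ minus $\bar u_0^2$. This $\delta$ vanishes only if $u_0^k\to\bar u_0$ strongly, which you have not shown. Even if you keep the $\frac{\alpha}{2}\delta$ that lower semicontinuity discards on the left, the limit only gives
\[
H(x,\bar u_0,\bar p)\le H(x,v,\bar p)+\epsilon|v-\bar u_0|^2+\bigl(\epsilon-\tfrac{\alpha}{2}\bigr)\delta .
\]
Since $\epsilon=C+\eta$ bears no relation to $\alpha$, the last term is an $O(1)$ error. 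It does not scale like $s^2$ under $v=\bar u_0+s(w-\bar u_0)$, so dividing by $s$ and letting $s\downarrow0$ breaks down.

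The repair is to use convexity before taking the limit. Because $u_l\ge0$, $H_\epsilon(x,\cdot,u_0^k,p^k)$ is a strictly convex quadratic on $[u_l,u_r]$. With $\Phi^k:=\partial_tp^k(\cdot,0)-\int_0^T\mathcal M\,\Delta(d\,p^k)\,dt$, Step (b) is therefore equivalent to
\[
\bigl(\alpha u_0^{k+1}+\beta+\Phi^k+2\epsilon(u_0^{k+1}-u_0^k)\bigr)(w-u_0^{k+1})\ge0\qquad\forall\, w\in[u_l,u_r].
\]
After testing with $\psi\ge0$, the $\epsilon$-term vanishes by (b). The only quadratic term, $-\alpha(u_0^{k+1})^2$, now has the favourable sign. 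The limit then yields the variational inequality for $H(x,\cdot,\bar p)$ at $\bar u_0$, and hence the PMP.

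Alternatively, the update is explicitly $u_0^{k+1}=\mathrm{proj}_{[u_l,u_r]}\bigl((2\epsilon u_0^k-\beta-\Phi^k)/(\alpha+2\epsilon)\bigr)$. This is a pointwise contraction in $u_0^k$ with factor $2\epsilon/(\alpha+2\epsilon)$. Its fixed point, with $\bar\Phi$ in place of $\Phi^k$, is the minimizer of $H(x,\cdot,\bar p)$. Together with (b) this even gives strong convergence of $u_0^k$.

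Both routes need $\Phi^k\to\bar\Phi$ strongly in $L^2(\Omega)$. This is the obstacle you rightly flag, and the paper does not address it either; it only asserts $p^k\to\bar p$ in $L^2(0,T;L^2)$. Your proposed fix would not close it. Theorem \ref{th:stab} is a forward-problem estimate, not one for \eqref{eq:adjoint}. An $H^{-1}$ pairing needs $H^1$ control of $u_0^k$, which $U_{ad}$ does not provide. That same missing $H^1$ bound also undercuts the uniform $X_T$ bounds that you, like the paper, feed into Aubin--Lions.
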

\begin{proof}
The first statement follows from \eqref{eq:ineq}, since $\epsilon$ can be chosen greater than $C$, which would imply that the sequence $\lbrace J(u_0^{k},u^k)\rbrace $ is monotonically decreasing the $\mathbb{R}$ and is a Cauchy sequence, hence convergent. To prove \eqref{sqhbc}, we rewrite 
\eqref{eq:ineq} as follows 
$$
 \| u_0^{k+1} - u_0^{k} \|^{2}_{ L^2 \left( \Omega\right) } \le \frac{1}{\eta}
 \Big[ J\left(u_0^k, p^{k}\right)   - J\left(u_0^{k+1},p^{k+1}\right) \Big] . 
$$
Therefore we have the partial sum
$$
 \sum_{k=0}^K \| u_0^{k+1} - p^{k} \|^{2}_{ L^2 \left( \Omega \right) } \le \frac{1}{\eta}
 \Big[ J\left(u_0^0,p^{0}\right)   - J\left(u_0^{K+1},p^{K+1}\right) \Big].
$$
This shows that in the limit $K \to  \infty$, the series with positive elements 
$\| u_0^{k+1} - u_0^{k} \|^{2}_{ L^2(\Omega)}$ is convergent, which proves the result. 

For the final statement, we note that since $U_{ad}$ is weakly sequentially compact in $L^2(\Omega)$, there exists a subsequence, still denoted by $\{u_0^k\}$, and an element $\bar u_0\in U_{ad}$ such that
\[
u_0^k\rightharpoonup \bar u_0
\quad \text{in }L^2(\Omega).
\]
Let
\[
u^k=\Lambda(u_0^k)
\]
be the corresponding sequence of states. By the compactness argument used in Theorem~\ref{th:existence}, together with the Aubin--Lions lemma, there exists a further subsequence such that
\[
u^k\to \bar u
\quad \text{strongly in }L^2(0,T;L^2(\Omega)).
\]
By the stability of the forward problem and the continuity of the control-to-state map, the limit satisfies
\[
\bar u=\Lambda(\bar u_0).
\]
Therefore, $(\bar u_0,\bar u)$ is an admissible state-initial condition pair. It remains to identify the limit as a stationary point. At the $k$-th SQH iteration, the updated initial condition $u_0^{k+1}$ is obtained by pointwise minimization of the Hamiltonian. Hence,
\[
H(x,u_0^{k+1}(x),p^k(x,:))
=
\min_{v\in[u_l,u_r]}
H(x,v,p^k(x,:)),
\qquad \text{for a.e. }x\in\Omega.
\]
Equivalently, for every $v\in[u_l,u_r]$,
\[
H(x,u_0^{k+1}(x),p^k(x,:))
\le
H(x,v,p^k(x,:)),
\qquad \text{for a.e. }x\in\Omega.
\]
Passing to the limit along the convergent subsequence, using the convergence of $p^k$ to $\bar p$, the weak convergence of $u_0^{k+1}$ to $\bar u_0$, and the continuity of the Hamiltonian with respect to its arguments, we obtain
\[
H(x,\bar u_0(x),\bar p(x,:))
\le
H(x,v,\bar p(x,:)),
\qquad \forall v\in[u_l,u_r],
\]
for a.e. $x\in\Omega$. Therefore,
\[
H(x,\bar u_0(x),\bar p(x,:))
=
\min_{v\in[u_l,u_r]}
H(x,v,\bar p(x,:)),
\qquad \text{for a.e. }x\in\Omega.
\]
which implies that $\bar u_0$ satisfies the PMP optimality condition.
\end{proof}

Theorem \ref{Tsqh2} guarantees that Algorithm \ref{algSQHmethod} is well defined for $\kappa>0$. Hence, there is an iteration number $k_0\in\mathbb{N}$ such that $\Vert u_0^{k_0+1}-u^{k_0}\Vert_{L^2(\Omega)}\leq\kappa$. This implies that the 
SQH algorithm stops in finitely many steps and converges weakly to a local minima along a subsequence.

\subsection{CNN-guided initial guess construction}

To solve for $u_0$ using the SQH scheme, we construct the initial guess, using a CNN algorithm. The architecture of the convolutional neural network is designed to reconstruct the initial condition of a PDE from multiple boundary measurements using a structured multi-channel approach. The input tensor consists of four channels, where each channel corresponds to data from one of the four boundary measurements of the domain. The CNN first processes each channel independently through identical convolutional layers consisting of three convolutional layers with 32 filters, kernel size $3 \times 3$, ReLU activation, and same padding. This allows the network to learn and preserve the distinct physical features of each boundary without prematurely mixing information.
The resulting feature maps from all four channels are then concatenated, forming a combined representation. A shared encoder is then applied, consisting of convolutional layers with 64 filters and kernel size $3 \times 3$, to learn interactions across the different boundary measurements. This is followed by an anisotropic downsampling step implemented via a convolution with stride $(1,2)$, which reduces the spatial resolution in one direction while preserving the other, aligning with the feature map with the reconstruction domain. After downsampling, additional convolutional layers with 64 filters and kernel size $3 \times 3$ further refine the representation and propagate information from the boundaries into the interior. Finally, a $1 \times 1$ convolution with a single filter and linear activation produces a single-channel output corresponding to the reconstructed initial condition.

\section{Numerical results}\label{sec:numericalresult}
In this section, we present the results of our SQH method to solve the inverse problem to obtain the initial damped wave acoustic pressure $p_0$ from observational data. 
We choose our domain $\Omega = (-1,1)\times (-1,1)$, the observation boundary as $\partial\Omega$, and the final time of observation as $T=2$. We choose a non-trapping sound speed $c(x) = 1+w(x)*[0.1*\cos(2\pi x_1) + 0.05*\sin(2\pi x_2)],$ where $w(x)$ is a mollifier centered at the middle of the domain with radius $\sqrt{0.5}$, with a maximum value of 1. The damping coefficient $d(x,t)$ is chosen as $0.1\exp(-t)$, the nonlinearity coefficient is chosen as $0.001$, and the memory kernel $M(t) = 0.01\exp(-2t)$. For the spatial grid, we choose 75 points along each dimension, whereas our time grid comprises of 100 points. To generate the data, we solve the wave equation \eqref{eq: Main PDE} in free space on a spatial grid with 37 points and on the temporal grid at 50 time points, and then interpolate the solution on the original grid to collect the data on the boundary $\partial\Omega$. 

We compare our reconstructions with those obtained via the time-reversal approach, in which the wave equation \eqref{eq: Main PDE} is solved backward in time using the observed boundary data as boundary conditions, as well as with reconstructions produced by our CNN model. The time-reversal solution evaluated at the final time serves as an approximation of the initial condition for the original wave equation \eqref{eq: Main PDE}. For qualitative and quantitative assessment, we employ standard image quality metrics, including the Mean Squared Error (MSE), Peak Signal-to-Noise Ratio (PSNR), and Structural Similarity Index Measure (SSIM), defined as follows:
\[
MSE(I^1,I^2) = \dfrac{1}{n}\sum_{p =1}^n (I_p^1-I^2_p)^2,~ PSNR(I^1,I^2) = 10\log_{10}\left(\dfrac{[\max({I^1,I^2})]^2}{MSE(I^1,I^2)}\right),
\]
where $p$ represents a pixel and $n$ is the total number of pixels, and 
\[
SSIM(I^1, I^2) 
= \frac{(2\mu_{I^1} \mu_{I^2} + C_1)(2\sigma_{I^1I^2} + C_2)}
       {(\mu_{I^1}^2 + \mu_{I^2}^2 + C_1)(\sigma_{I^1}^2 + \sigma_{I^2}^2 + C_2)},
\]
where $\mu_{I^1}$ and $\mu_{I^2}$ denote the average intensities of the 
images $I^1$ and $I^2$, respectively, $\sigma_{I^1}^2$ and $\sigma_{I^2}^2$ represent 
the corresponding intensity variances, and $\sigma_{I^1I^2}$ is the covariance between the two 
images. The MSE measures the average squared pixel-wise discrepancy between two images and is purely error-based, without accounting for perceptual aspects. PSNR, a logarithmic transformation of MSE, quantifies reconstruction fidelity relative to the maximum possible intensity; higher values indicate lower reconstruction error, though it still neglects structural information. In contrast, SSIM evaluates similarity by incorporating luminance, contrast, and structural components, making it more consistent with human visual perception. Consequently, while MSE and PSNR emphasize pixel-level accuracy, SSIM provides a measure of perceptual similarity between images.

For training the CNN, we generated a dataset of \(750\) samples: For the first 150 samples, the output was chosen as Gaussian functions with centers randomly drawn from the interval \( (-0.9, 0.9) \) and widths drawn from \((50, 150)\). For the next 350 samples, the output was chosen as characteristic functions with centers in the interval $(-0.9,0.9)$ and widths in the interval $(0.1,0.5)$. For the last 250 samples, we have a sum of a Gaussian and characteristic function as output, with centers of the Gaussian drawn from the interval $(-0.9,0.9)$ and widths in $(10,60)$ while the centers of the characteristic function are in $(-0.9,0.9)$ and widths in $(0.1,0.5)$. The number of epochs chosen for the training was 500, with batch size 32. The optimizer was chosen as ``Adam" with the mean square loss function.

In our first test case, our true phantom is a combination of a Gaussian phantom with center at $(-0.3,-0.3)$ and standard deviation 0.5, and a disk phantom of center (0.2,-0.2), radius 0.2, and intensity 1. Both the 2D and cross-sectional 1D view  reconstructions are shown in Figure \ref{fig:2G}.
\begin{figure}[H]
\centering
\subfloat[True]{\includegraphics[width=0.24\textwidth, height=0.24\textwidth]{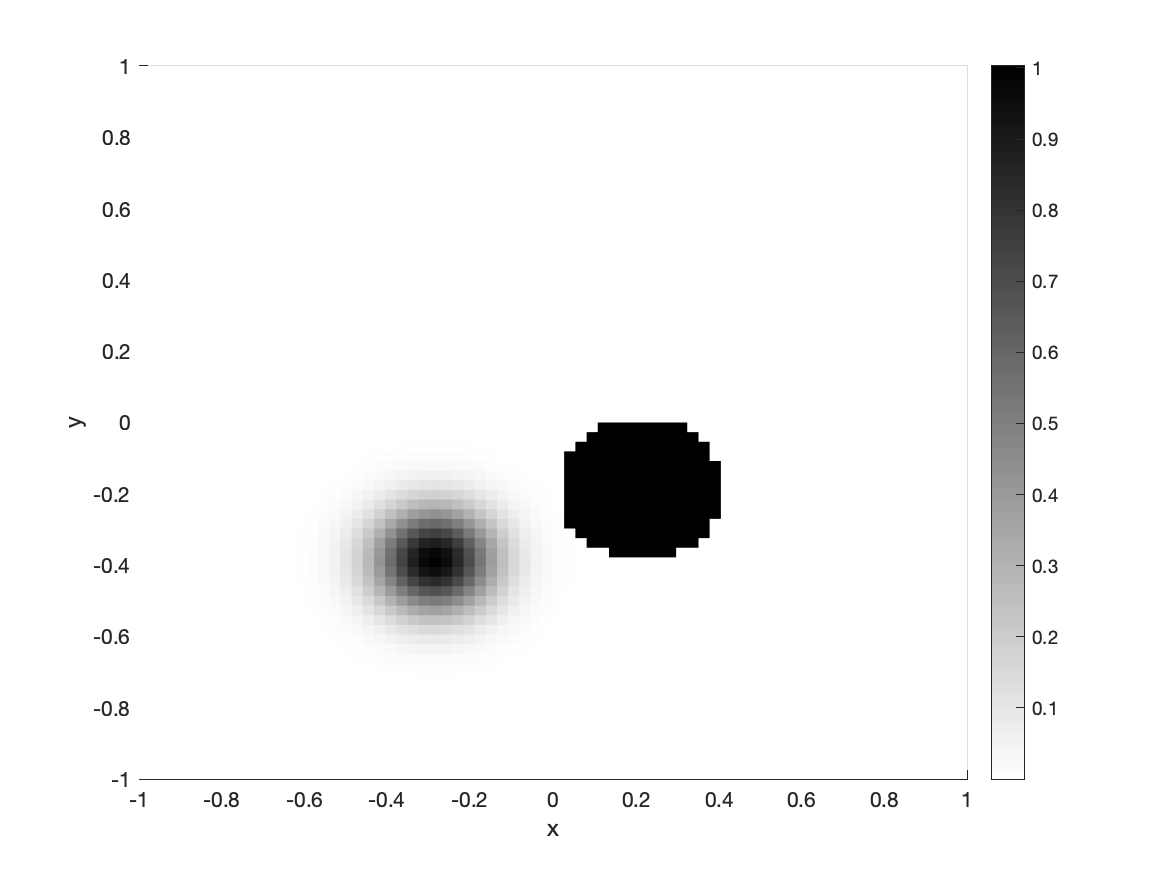} \label{fig:true2G}}
\subfloat[Time-reversal]{\includegraphics[width=0.24\textwidth, height=0.24\textwidth]{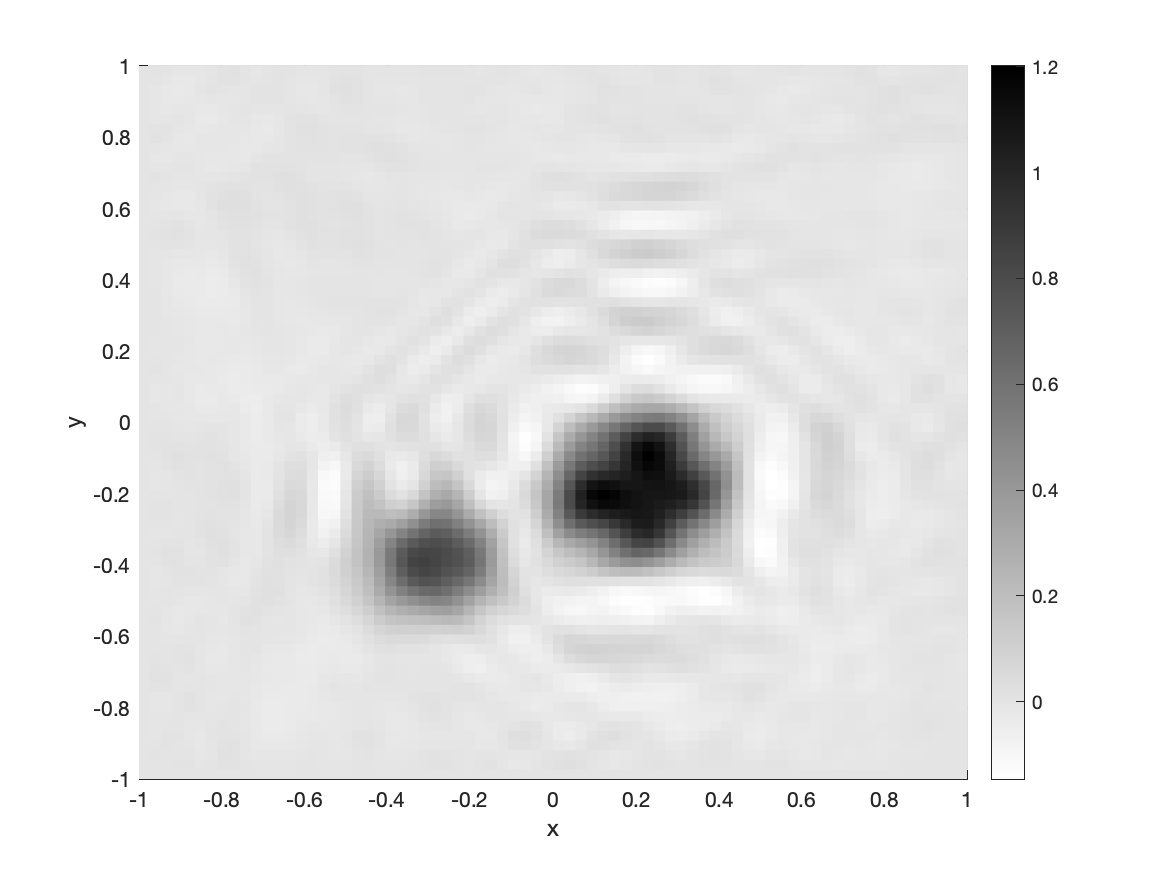}\label{fig:TR2G}}
\subfloat[CNN]{\includegraphics[width=0.24\textwidth, height=0.24\textwidth]{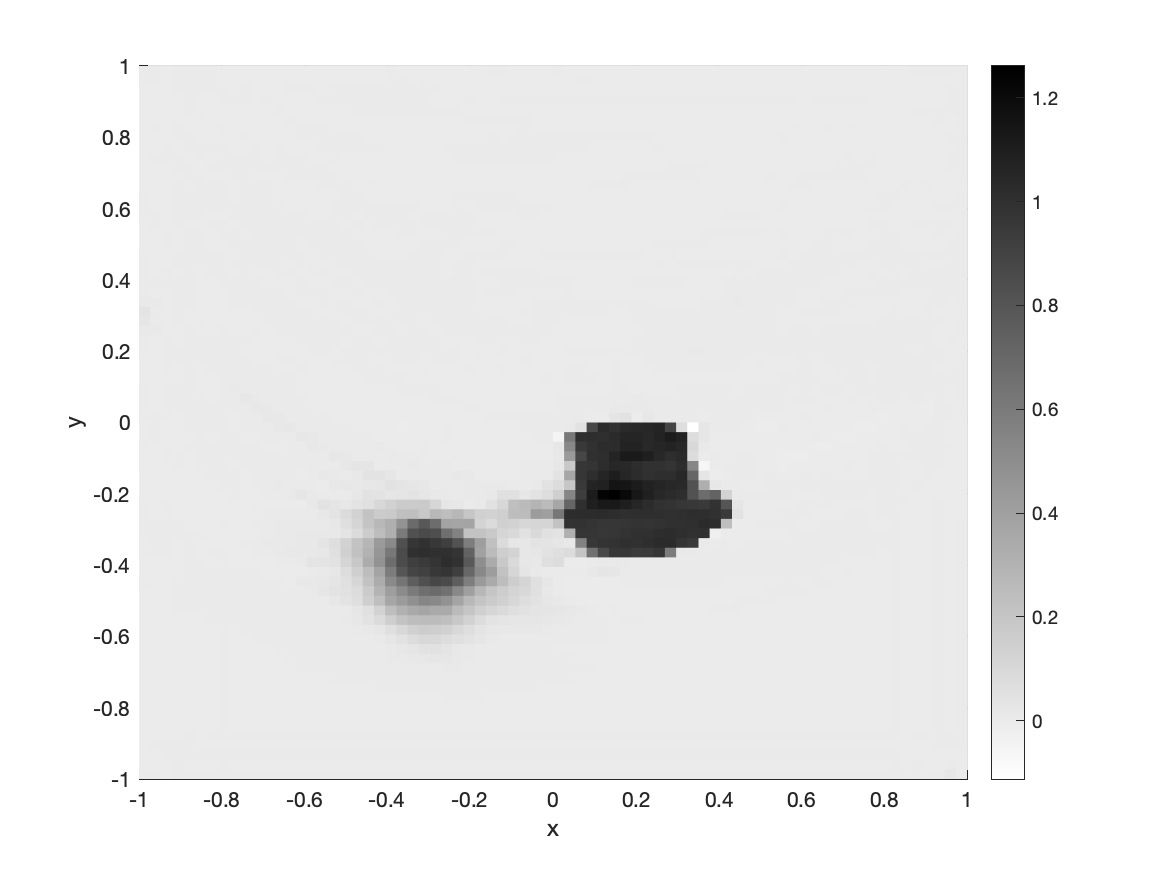}\label{fig:CNN2G}}
\subfloat[SQH]{\includegraphics[width=0.24\textwidth, height=0.24\textwidth]{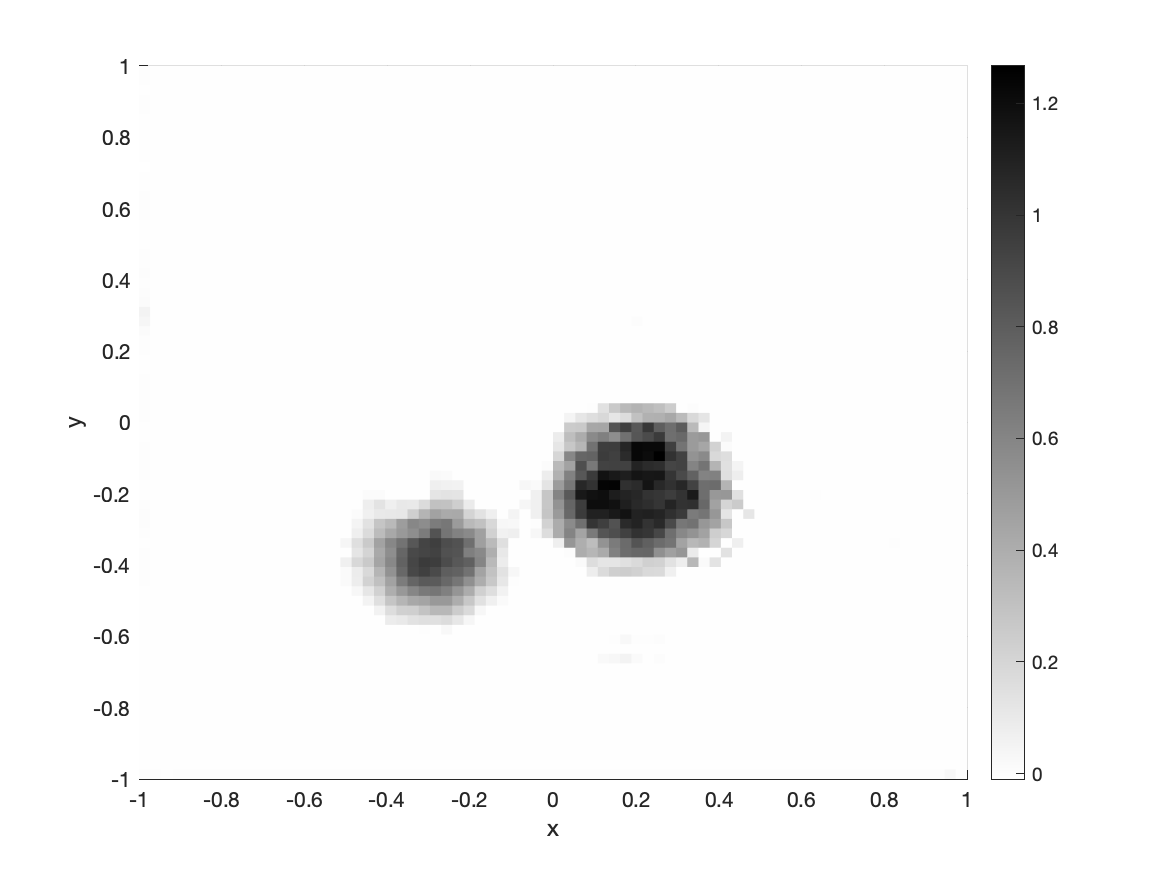}\label{fig:SQH2G}}\\
\subfloat[True]{\includegraphics[width=0.24\textwidth, height=0.24\textwidth]{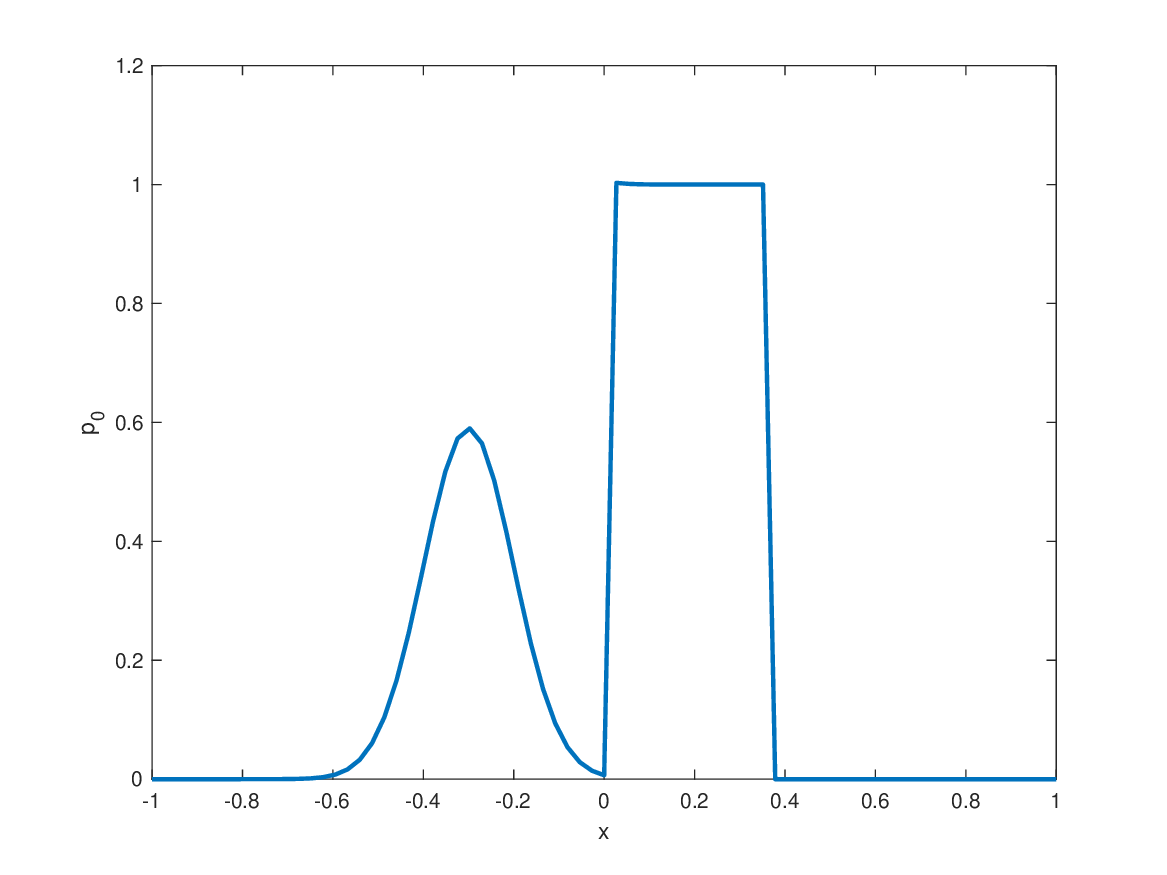} \label{fig:truecross2G}}
\subfloat[Time-reversal]{\includegraphics[width=0.24\textwidth, height=0.24\textwidth]{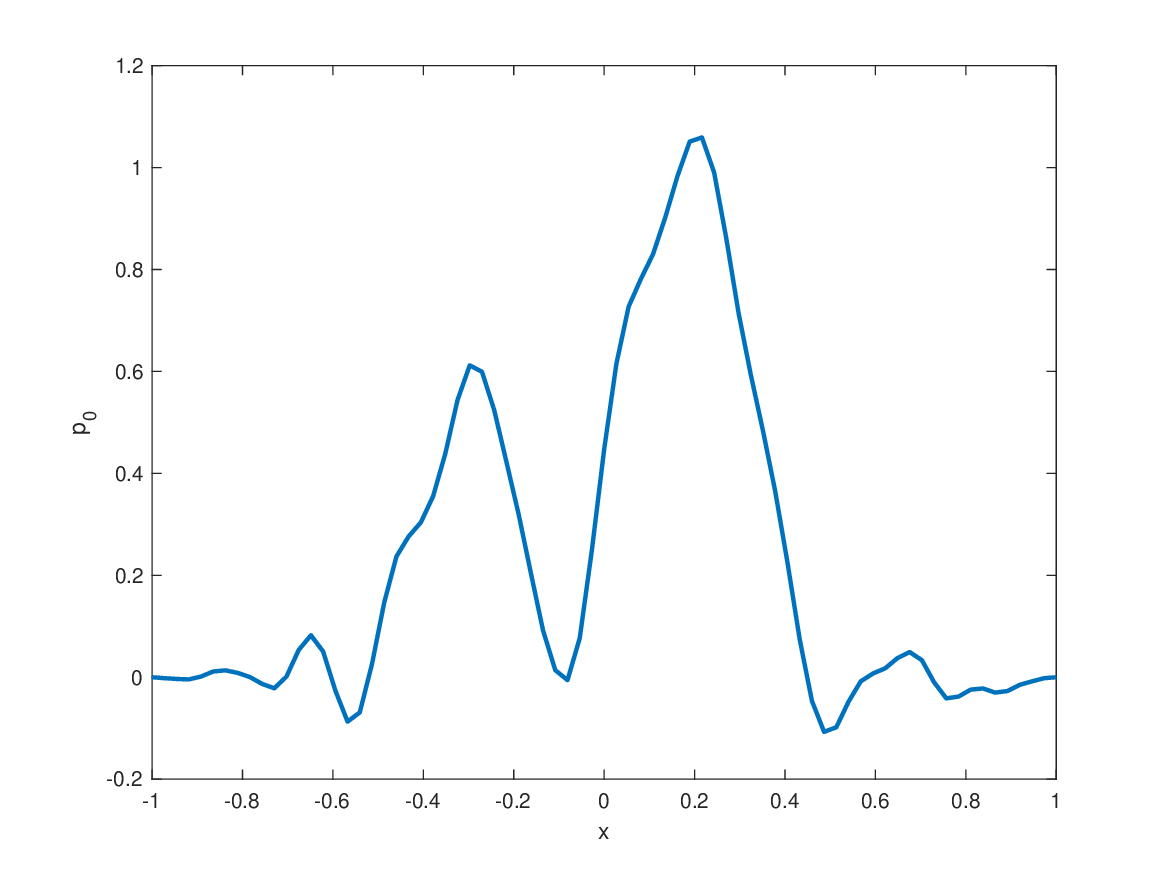}\label{fig:TRcross2G}}
\subfloat[CNN]{\includegraphics[width=0.24\textwidth, height=0.24\textwidth]{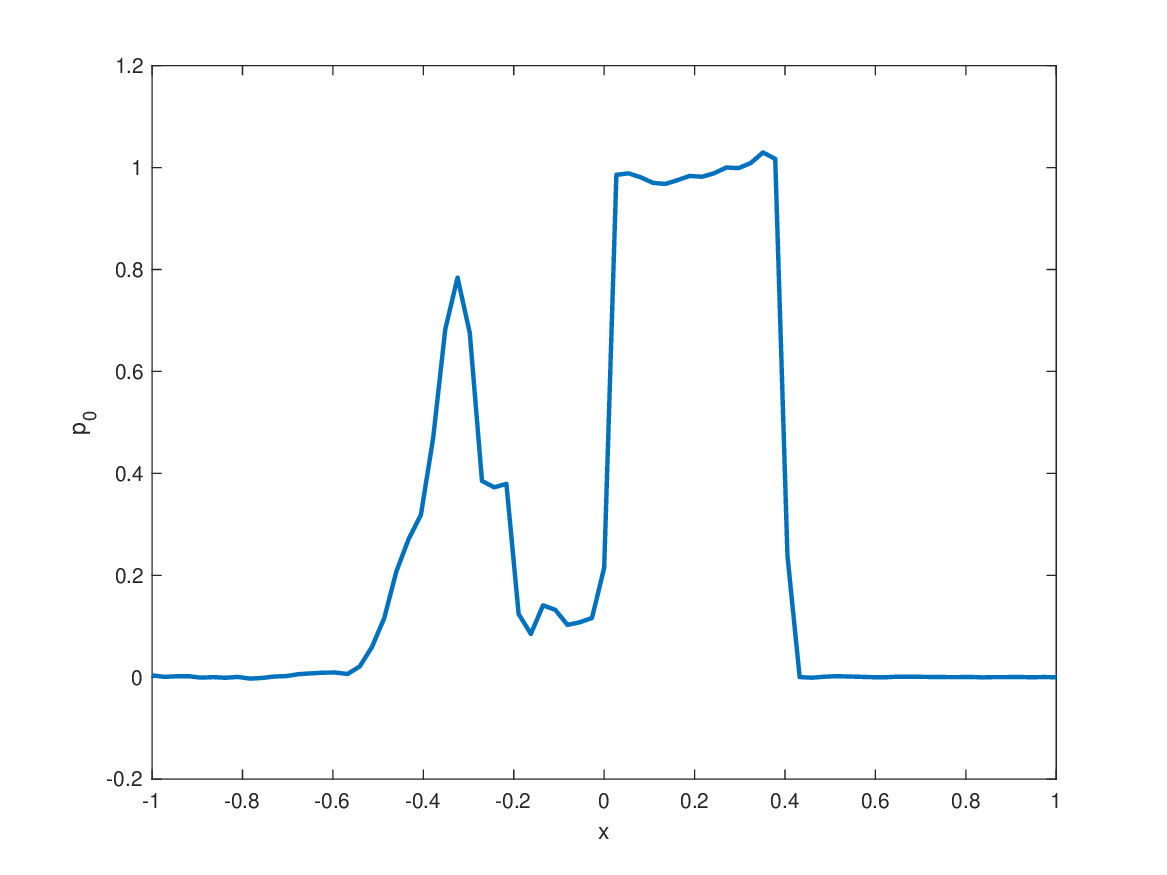}\label{fig:CNNcross2G}}
\subfloat[SQH]{\includegraphics[width=0.24\textwidth, height=0.24\textwidth]{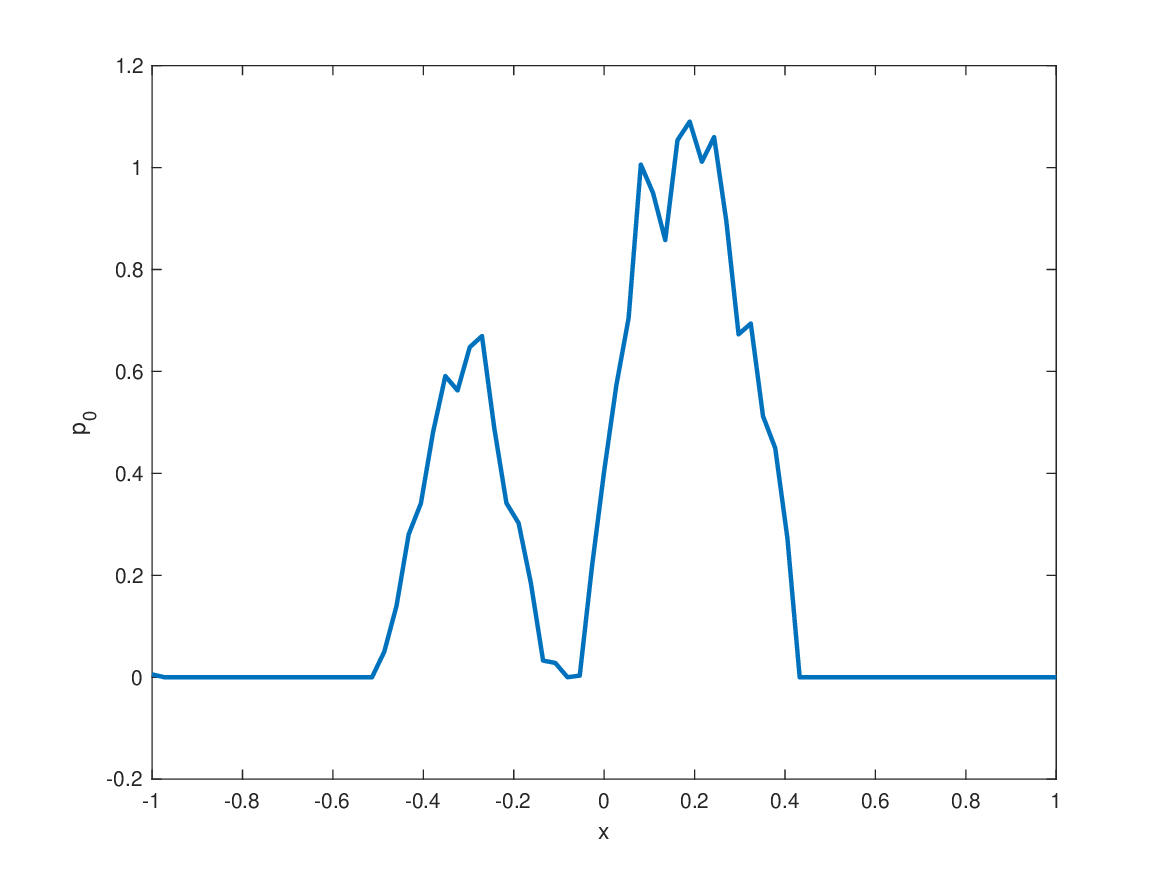}\label{fig:SQHcross2G}}
\caption{Test Case 1: Reconstructions  with two phantoms; Top row corresponds to the 2D profiles; Bottom row corresponds to a 1D cross sectional profile}
\label{fig:2G}
  \end{figure}
We observe that the time-reversal reconstruction provides fair resolution but not the best contrast. The CNN reconstruction is not the best in terms of resolution. On the other hand, the SQH algorithm provides superior resolution and contrast, outperforming both the methods. 

For the next test case, we now consider a combination of 3 phantoms: a Gaussian phantom centered at (-0.5,-0.5) and standard deviation 0.5, 2 disk phantoms centered at $(0.2,0.2)$ and $(0.5,-0.4)$ with radius 0.2 and intensity 1. Both the 2D and cross-sectional 1D view  reconstructions are shown in Figure \ref{fig:3G}.
  \begin{figure}[H]
\centering
\subfloat[True]{\includegraphics[width=0.24\textwidth, height=0.24\textwidth]{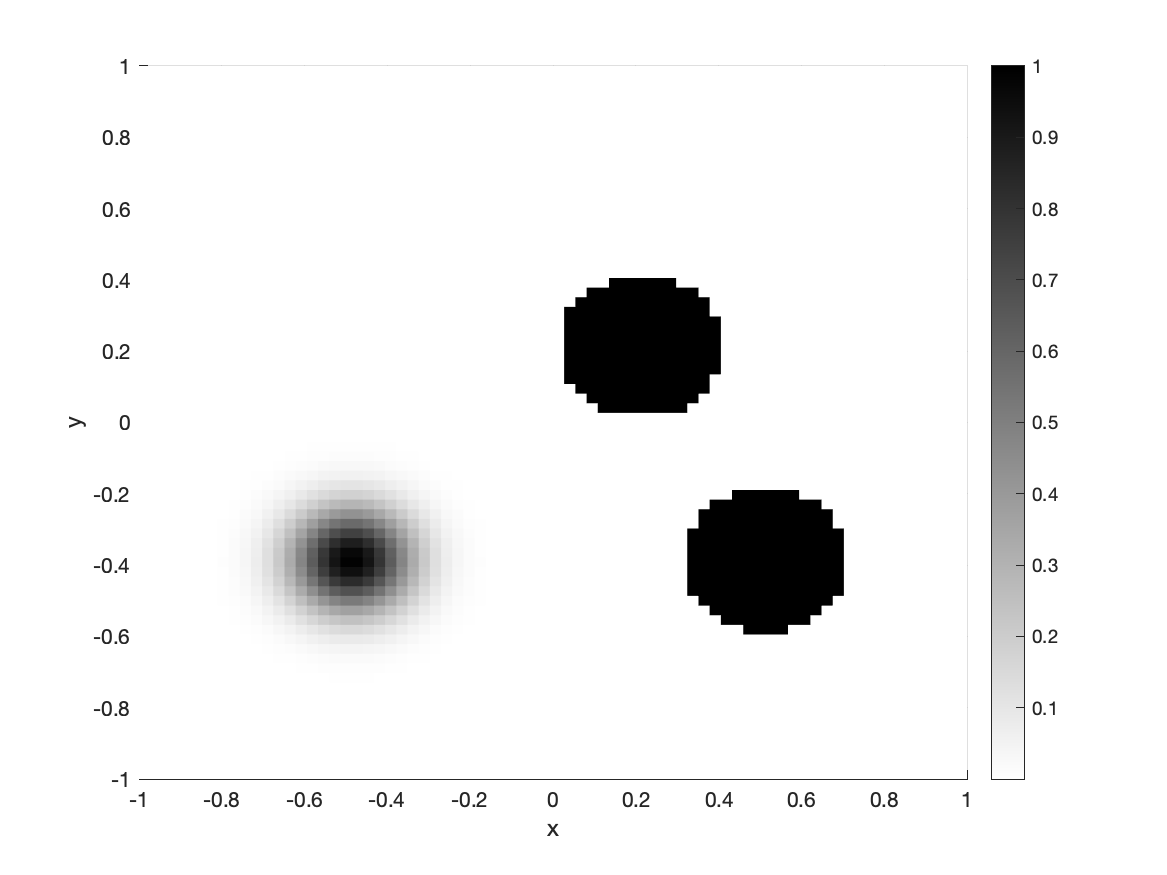} \label{fig:true3G}}
\subfloat[Time-reversal]{\includegraphics[width=0.24\textwidth, height=0.24\textwidth]{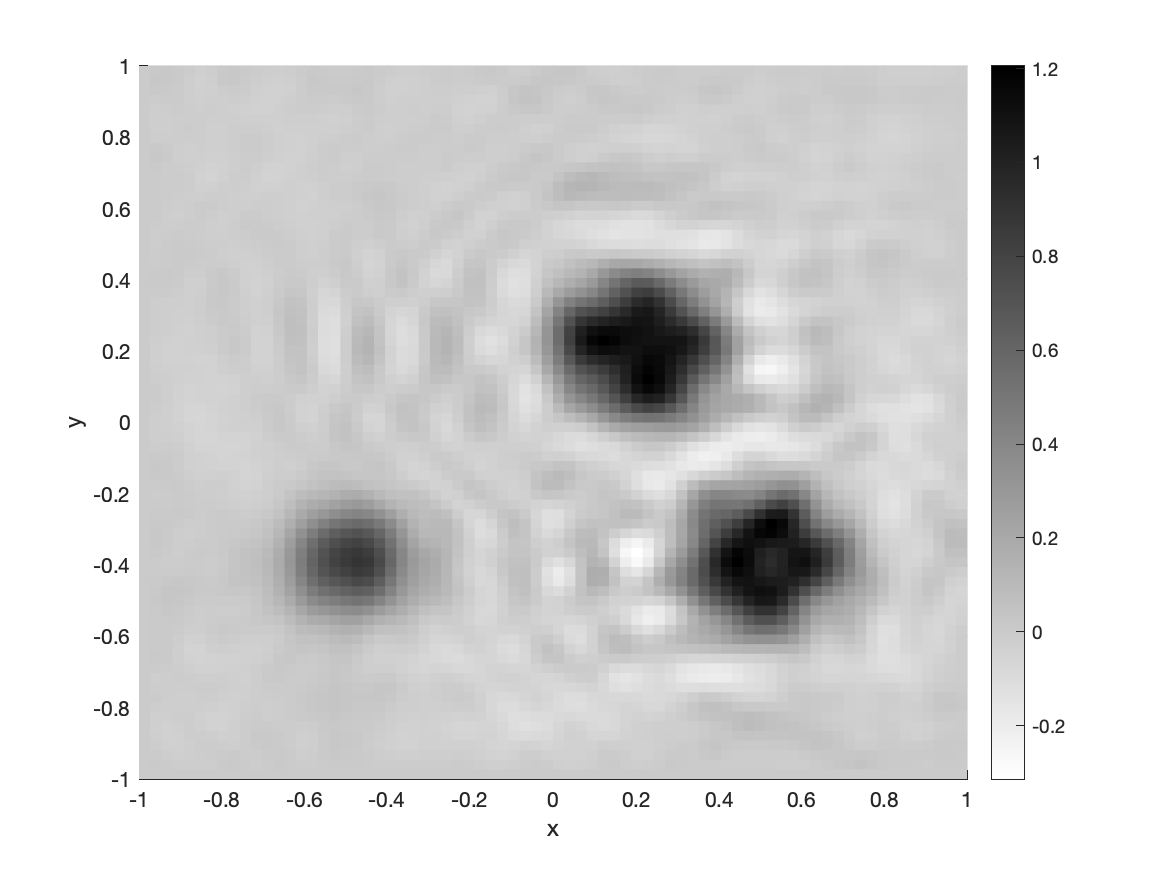}\label{fig:TR3G}}
\subfloat[CNN]{\includegraphics[width=0.24\textwidth, height=0.24\textwidth]{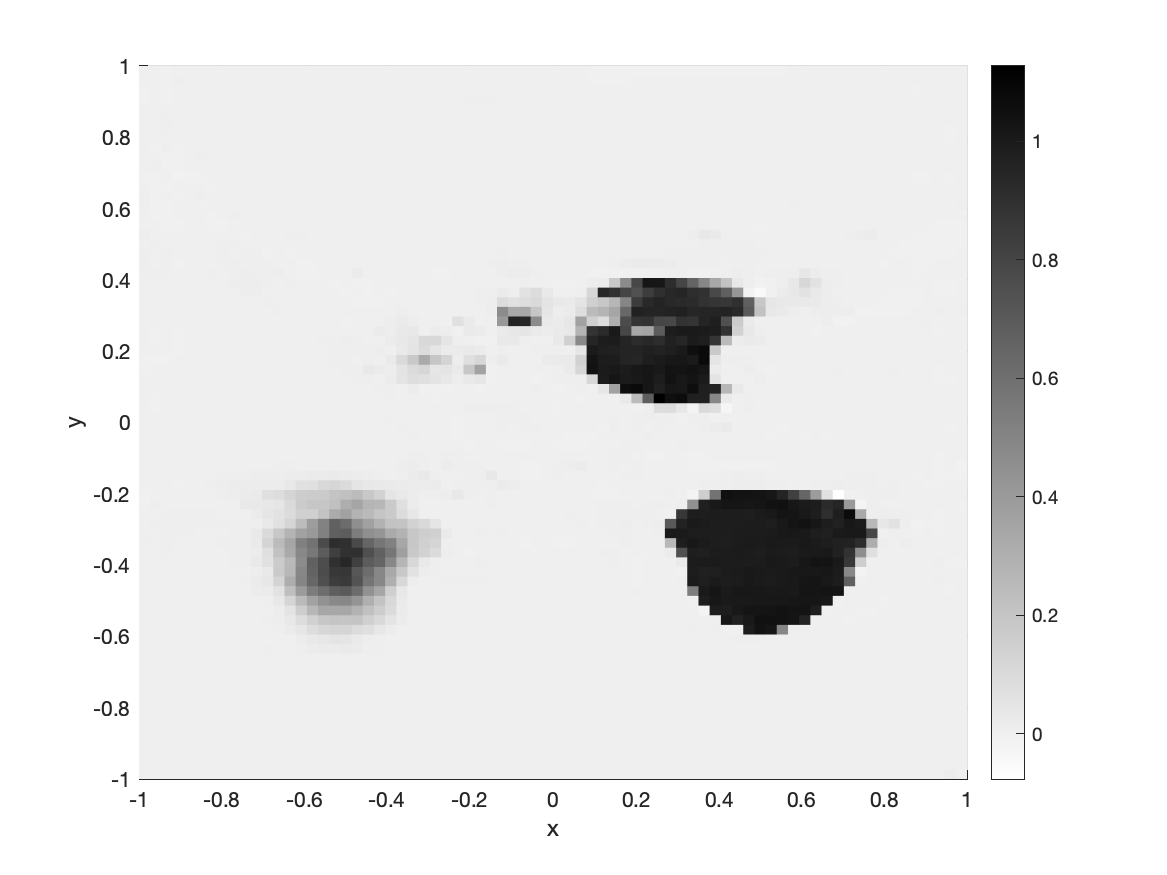}\label{fig:CNN3G}}
\subfloat[SQH]{\includegraphics[width=0.24\textwidth, height=0.24\textwidth]{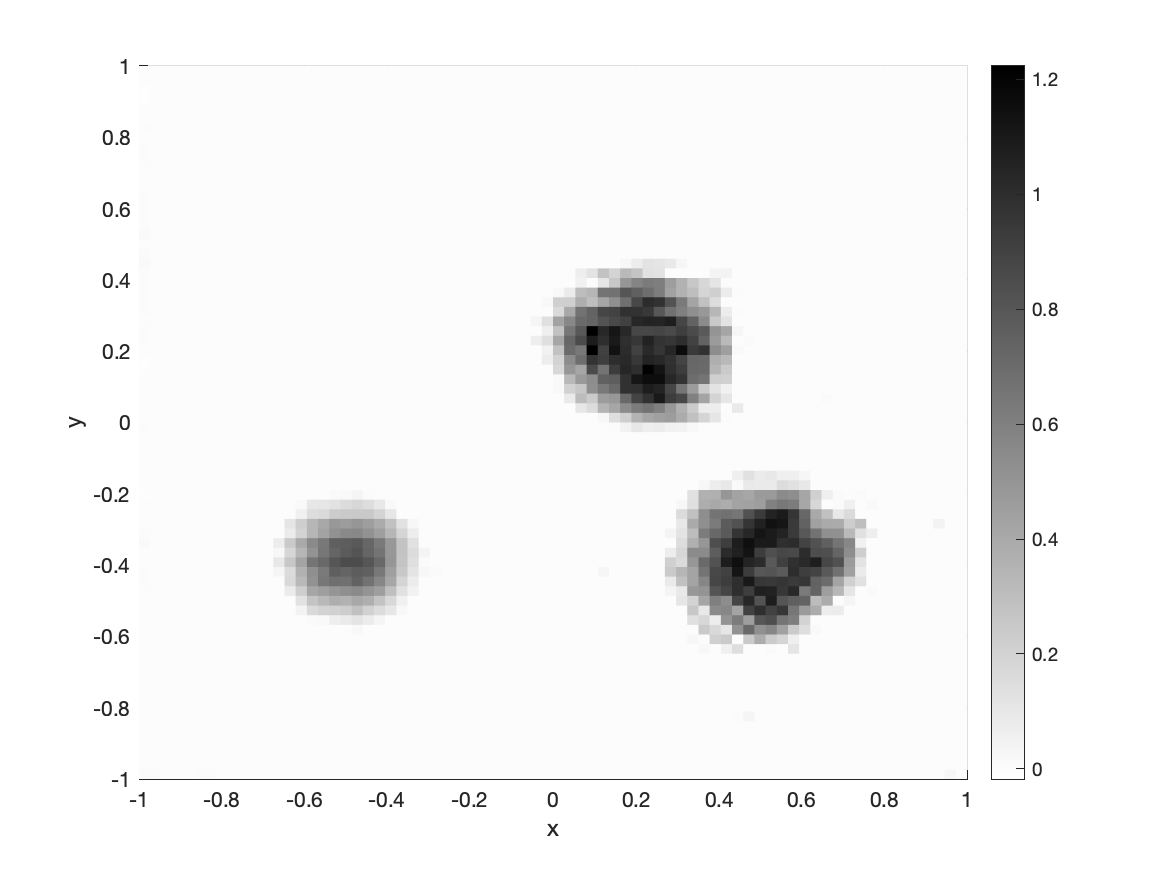}\label{fig:SQH3G}}\\
\subfloat[True]{\includegraphics[width=0.24\textwidth, height=0.24\textwidth]{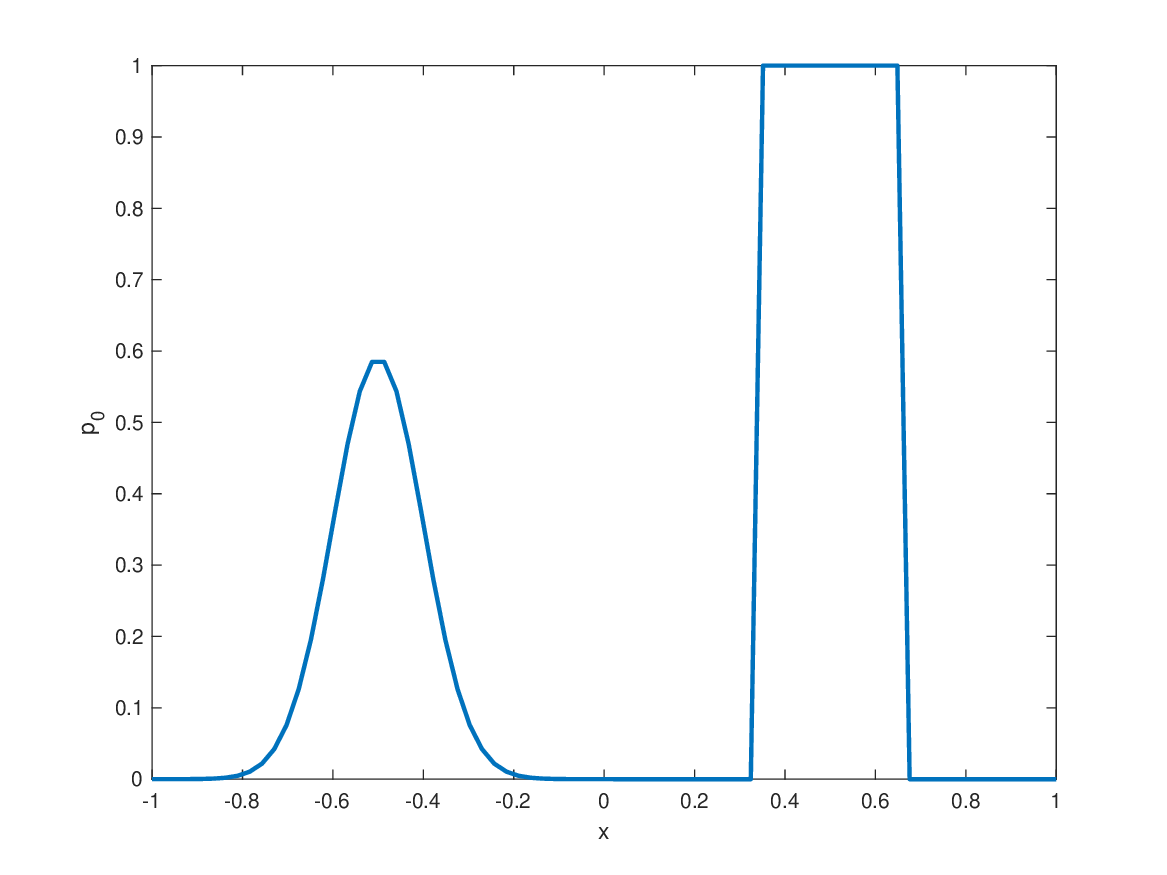} \label{fig:truecross3G}}
\subfloat[Time-reversal]{\includegraphics[width=0.24\textwidth, height=0.24\textwidth]{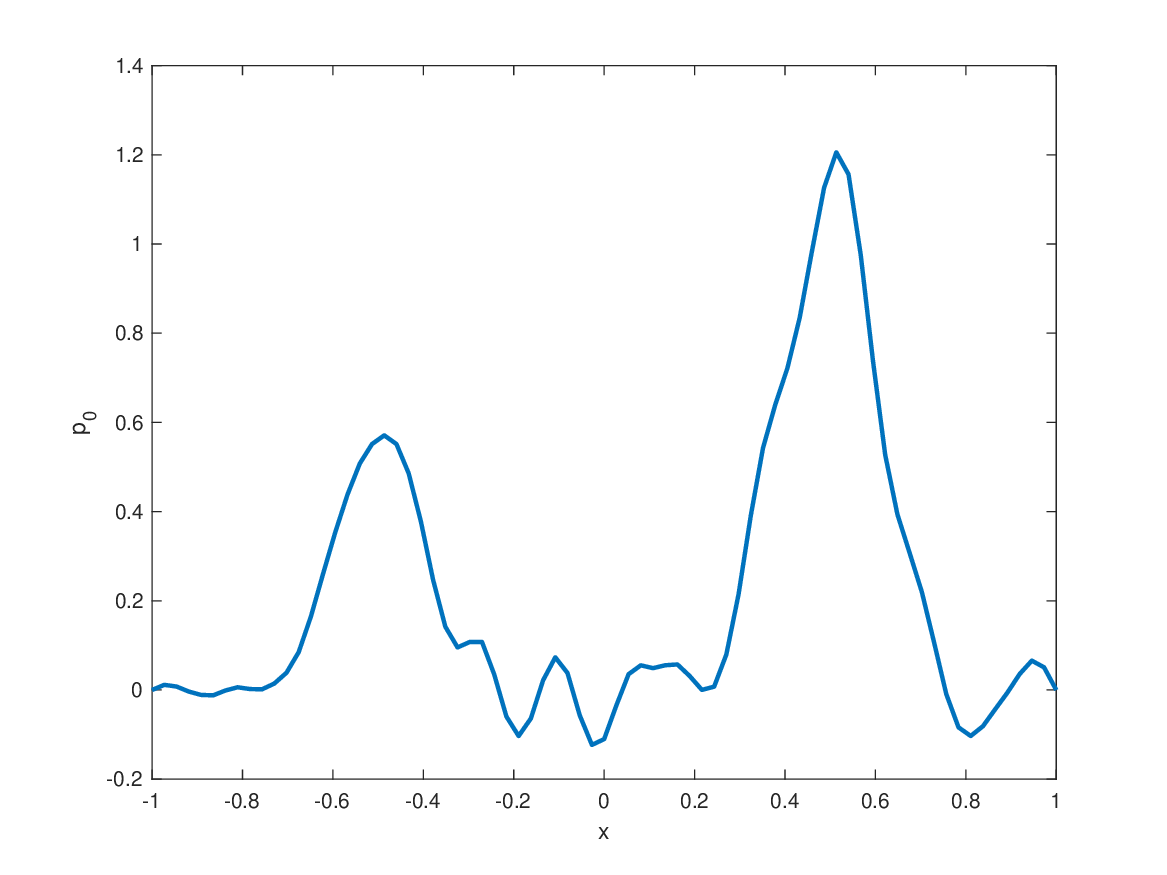}\label{fig:TRcross3G}}
\subfloat[CNN]{\includegraphics[width=0.24\textwidth, height=0.24\textwidth]{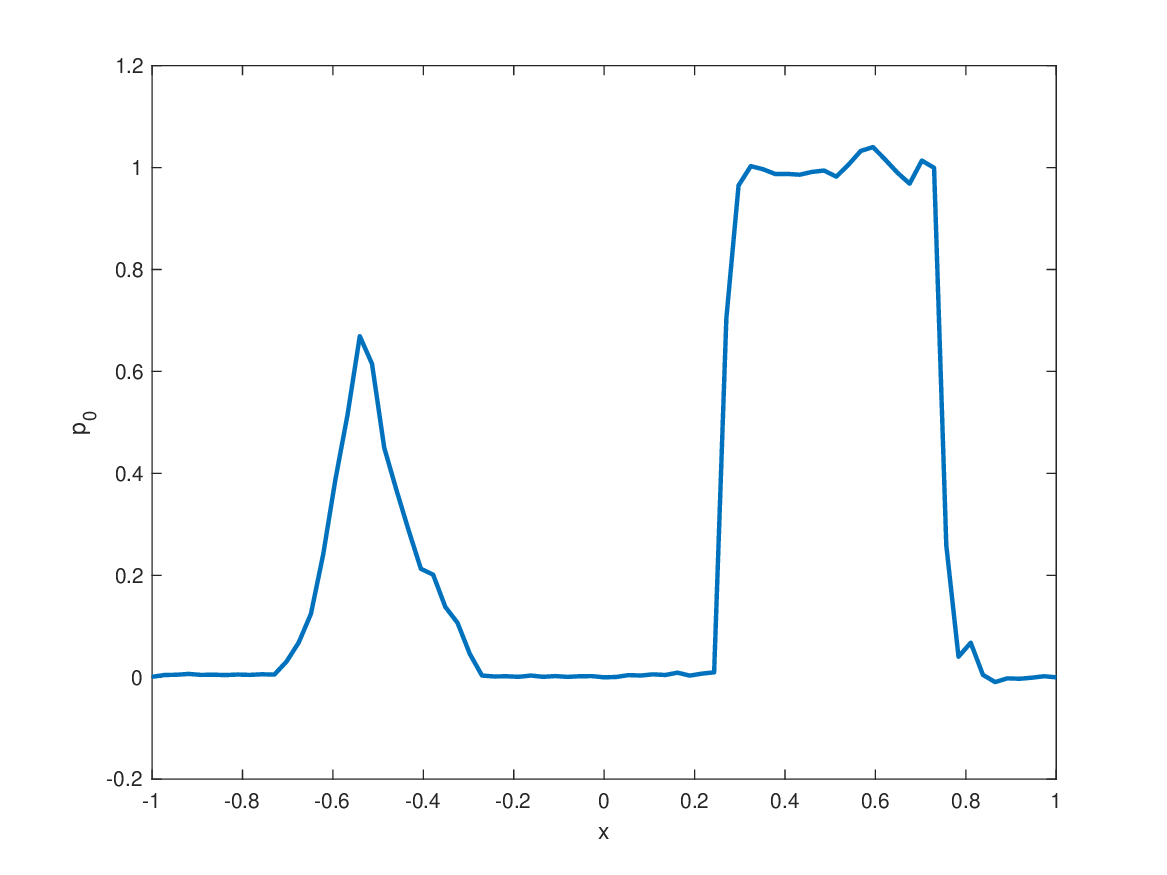}\label{fig:CNNcross3G}}
\subfloat[SQH]{\includegraphics[width=0.24\textwidth, height=0.24\textwidth]{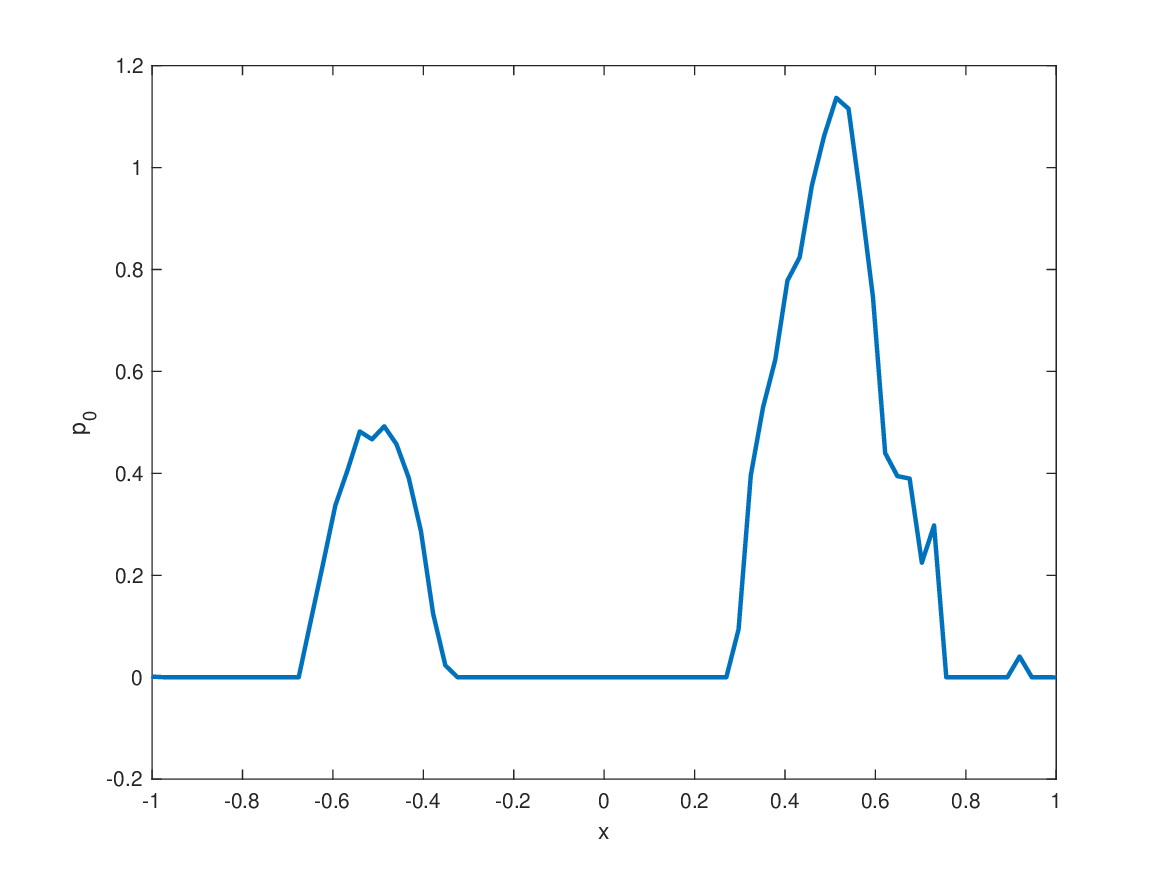}\label{fig:SQHcross3G}}
\caption{Test Case 2: Reconstructions with 3 phantoms; Top row corresponds to the 2D profiles; Bottom row corresponds to a 1D cross sectional profile}
\label{fig:3G}
  \end{figure}
We now notice the presence of severe artifacts in the reconstruction with the time-reversal algorithm, leading to poor resolution and contrast. Even though the CNN was not trained with 3 phantoms, its reconstruction does capture the disk discontinuity with a better contrast, but has other superficial structures present. The SQH algorithm is able to significantly clear off the artifacts due to the sparsity promoting feature and the contrast and resolution are far more better.

For the next test case, we consider a heart and lung phantom, represented by 2 ellipses and a disk. The reconstructions, with both 2D and cross-sectional 1D views, are shown in Figure \ref{fig:HL}.
  \begin{figure}[H]
\centering
\subfloat[True]{\includegraphics[width=0.24\textwidth, height=0.24\textwidth]{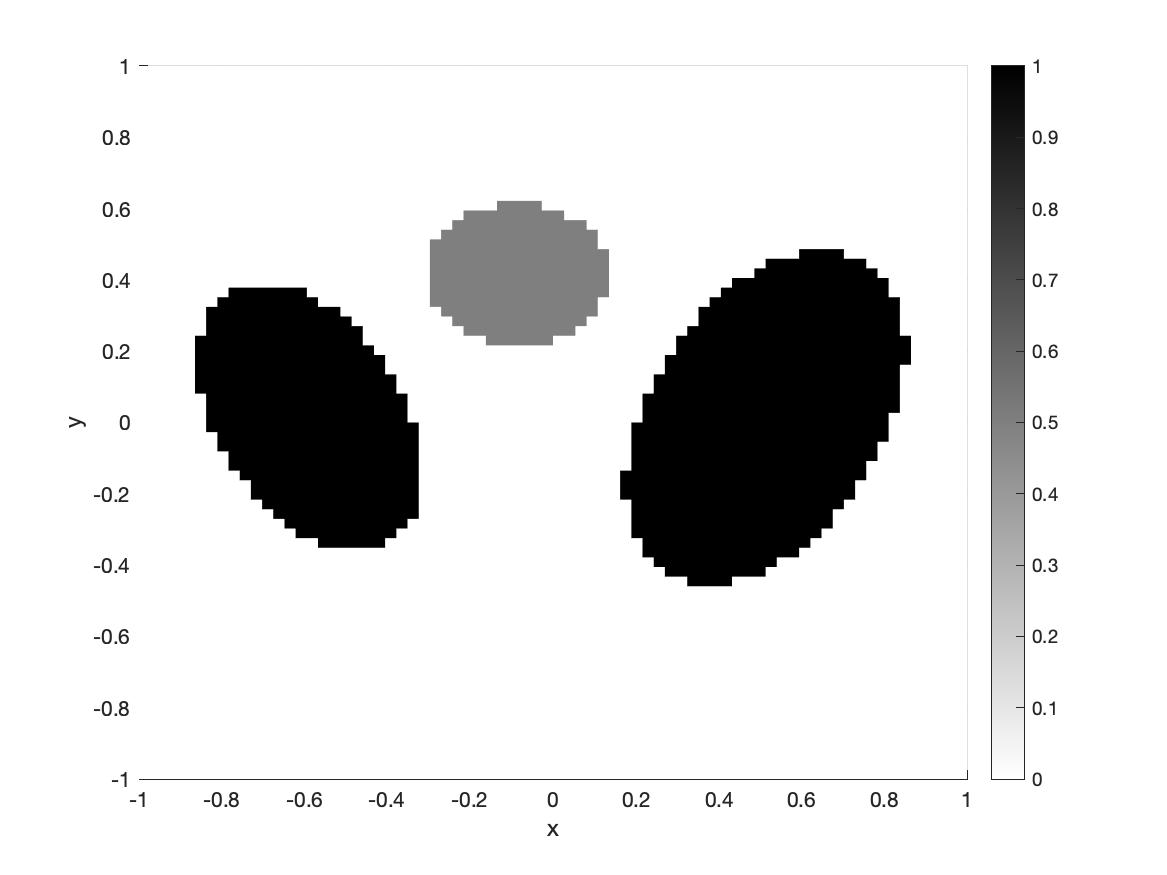} \label{fig:trueHL}}
\subfloat[Time-reversal]{\includegraphics[width=0.24\textwidth, height=0.24\textwidth]{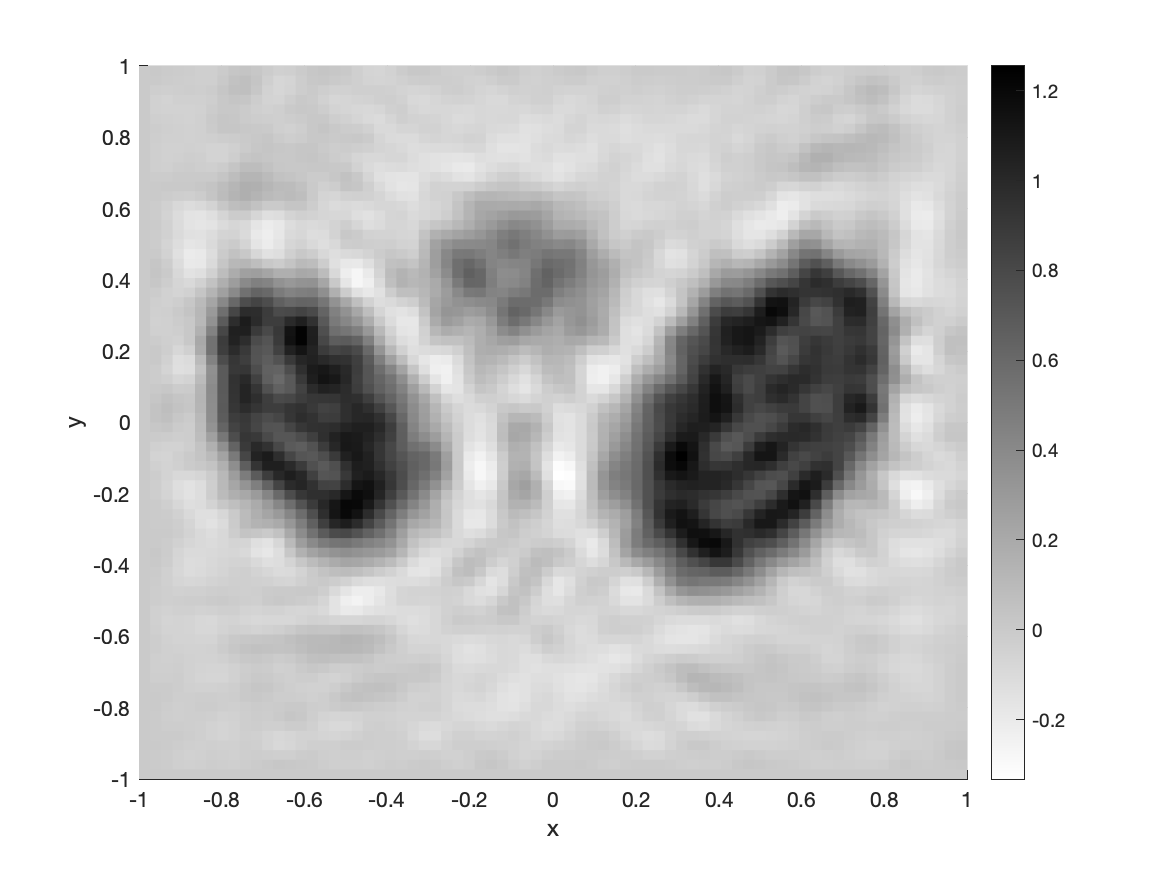}\label{fig:TRHL}}
\subfloat[CNN]{\includegraphics[width=0.24\textwidth, height=0.24\textwidth]{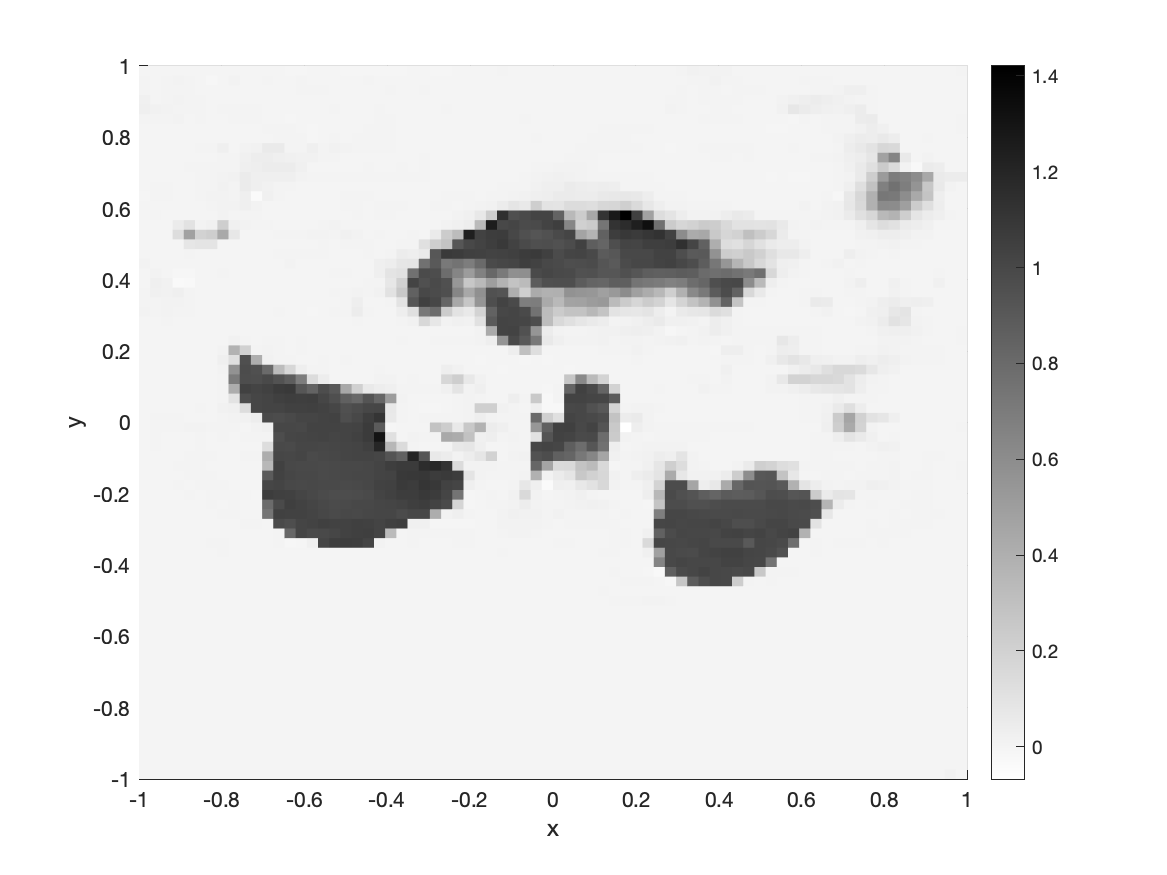}\label{fig:CNNHL}}
\subfloat[SQH]{\includegraphics[width=0.24\textwidth, height=0.24\textwidth]{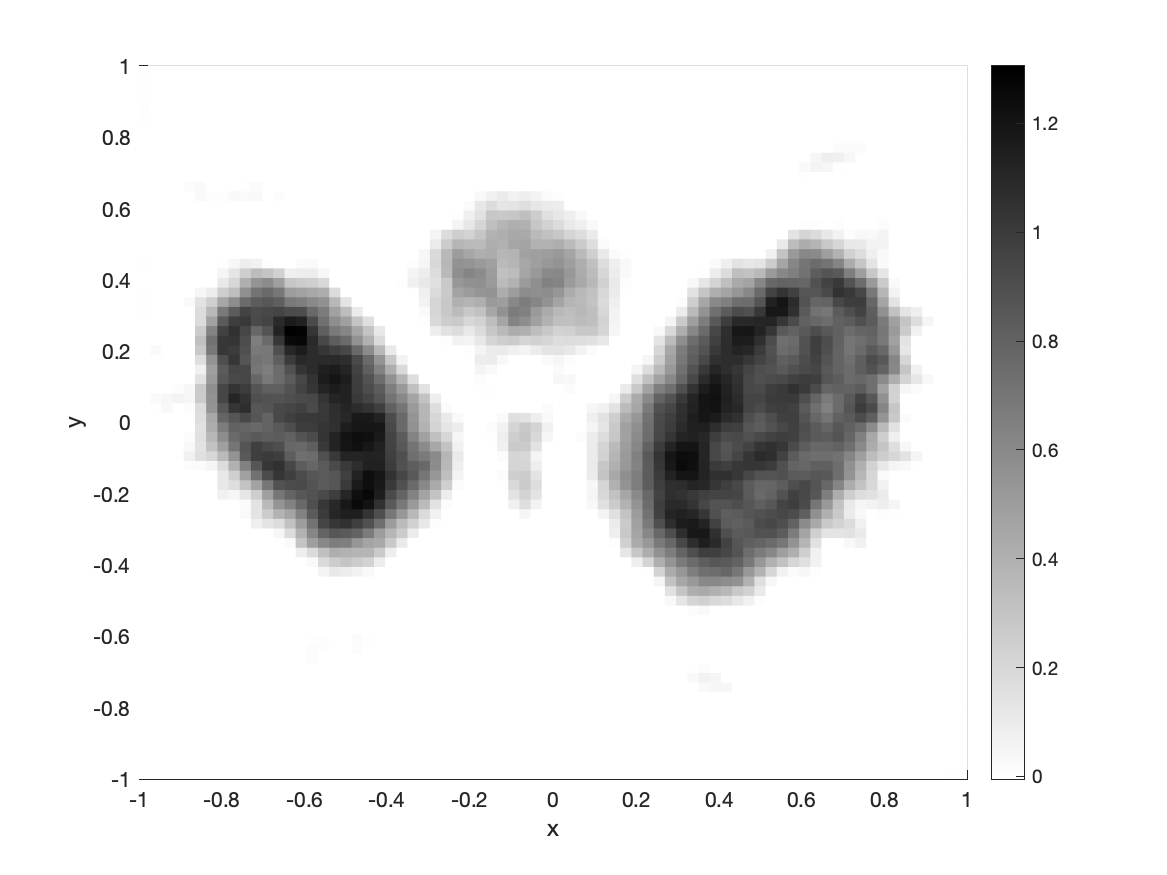}\label{fig:SQHHL}}\\
\subfloat[True]{\includegraphics[width=0.24\textwidth, height=0.24\textwidth]{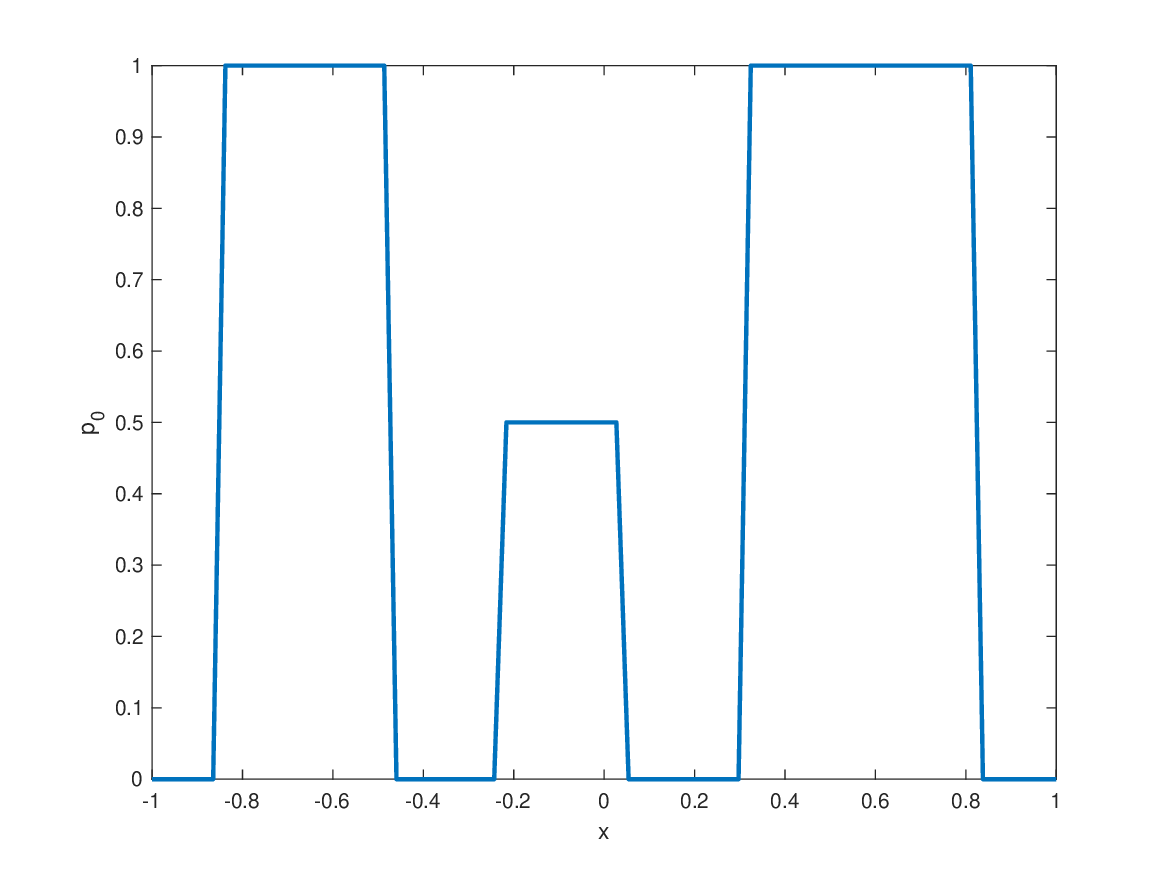} \label{fig:truecrossHL}}
\subfloat[Time-reversal]{\includegraphics[width=0.24\textwidth, height=0.24\textwidth]{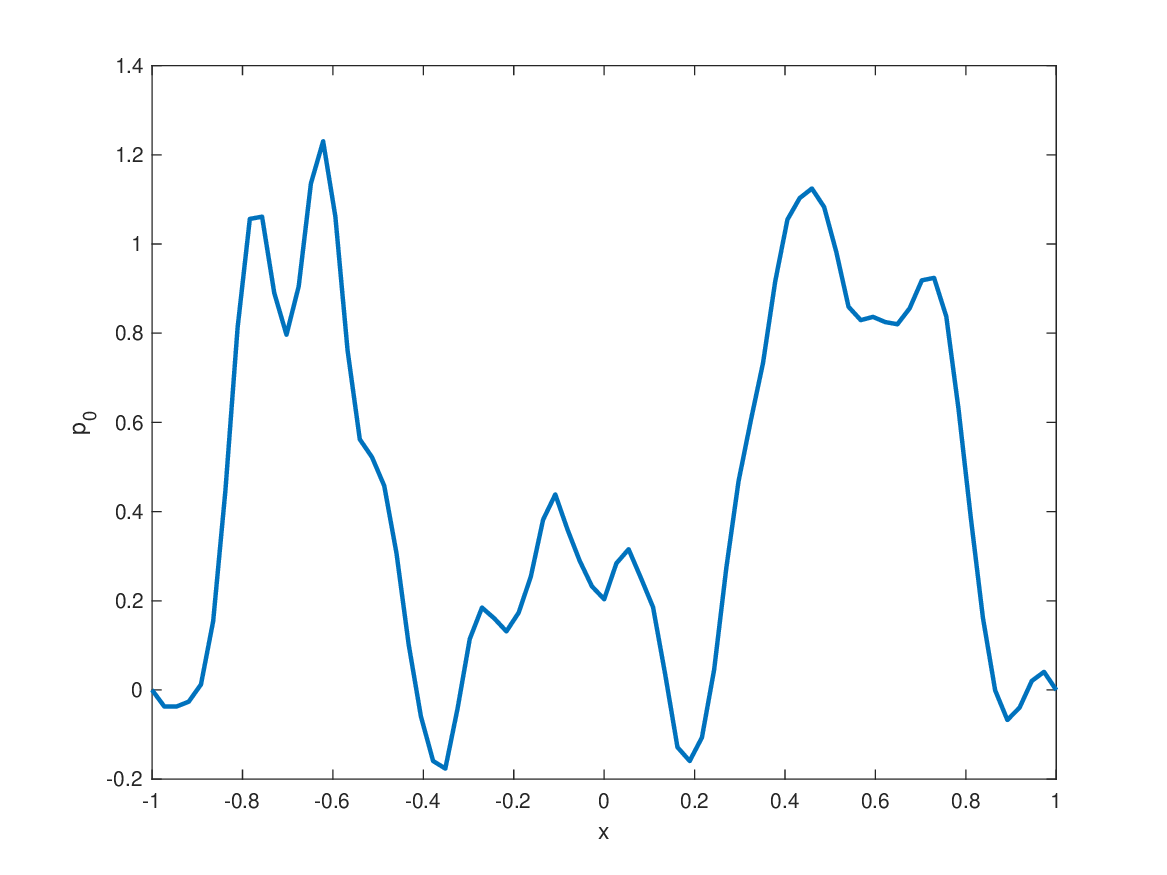}\label{fig:TRcrossHL}}
\subfloat[CNN]{\includegraphics[width=0.24\textwidth, height=0.24\textwidth]{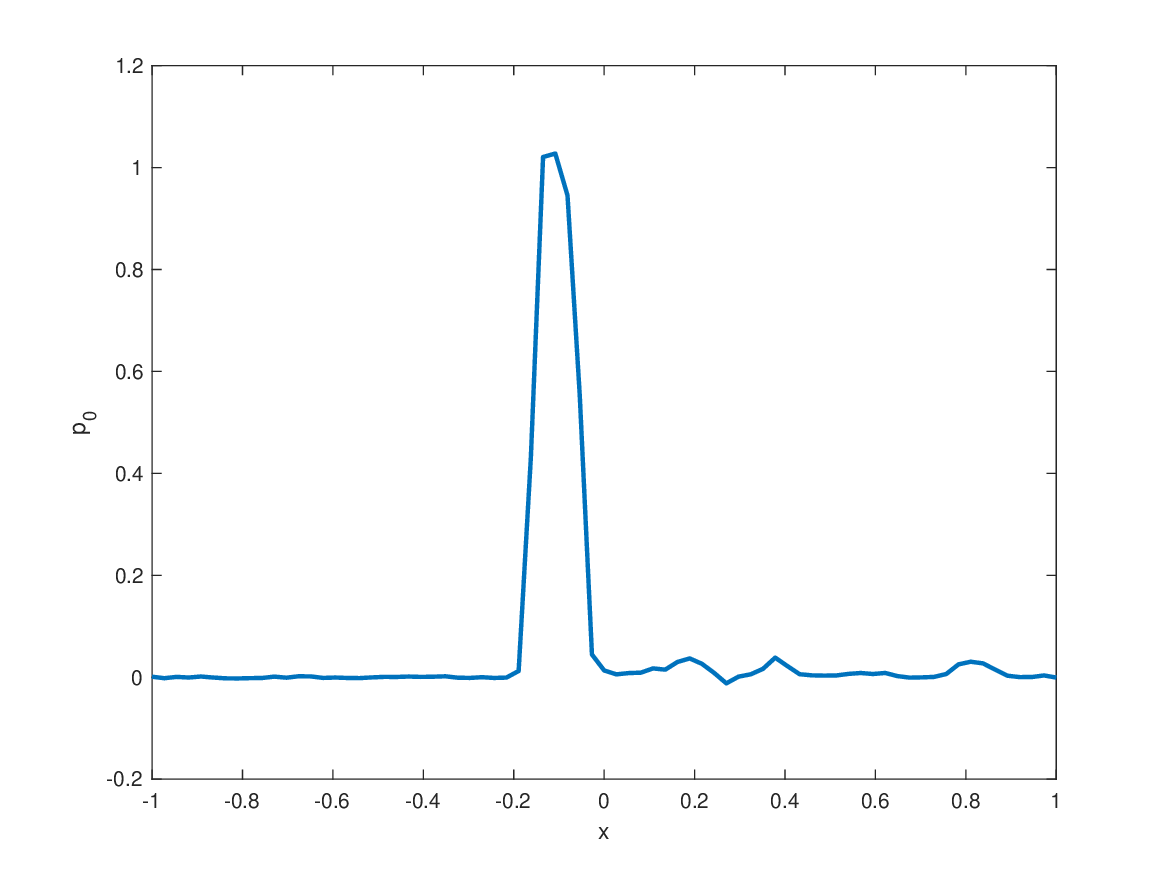}\label{fig:CNNcrossHL}}
\subfloat[SQH]{\includegraphics[width=0.24\textwidth, height=0.24\textwidth]{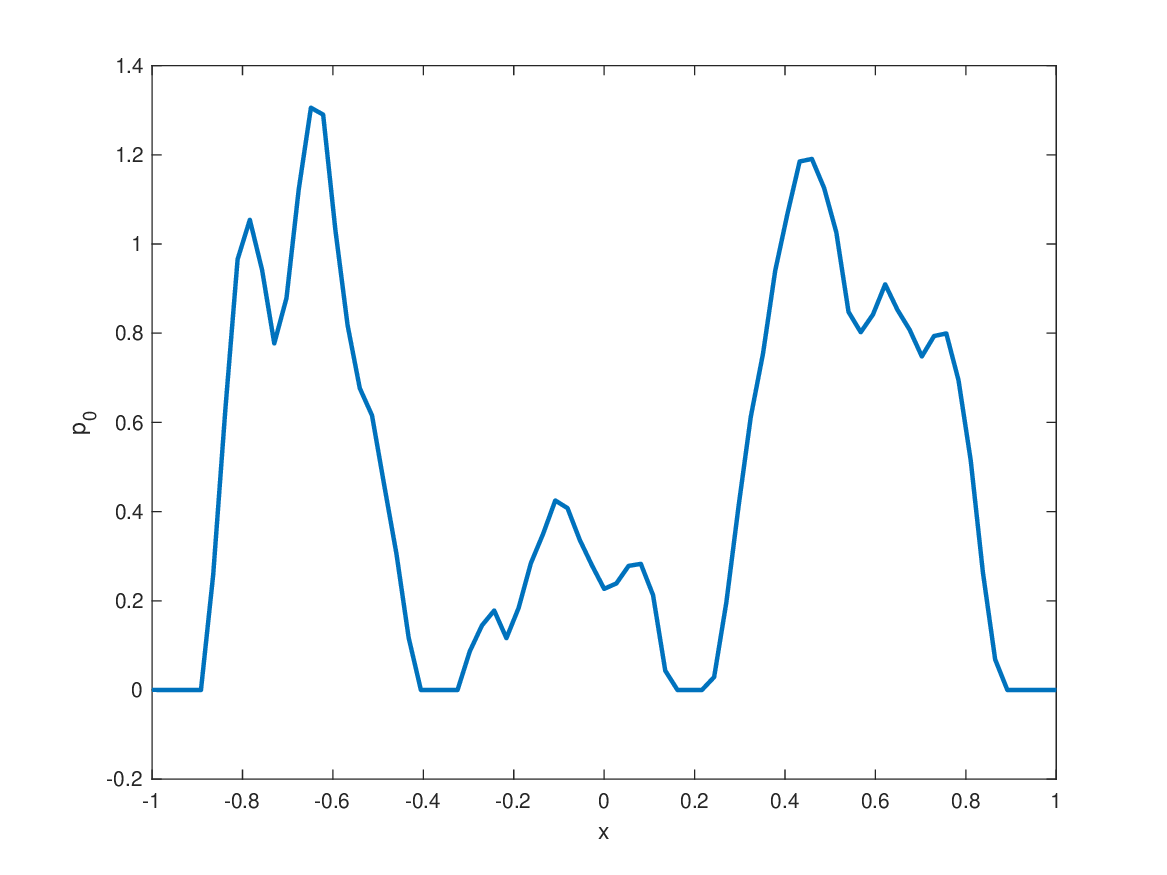}\label{fig:SQHcrossHL}}
\caption{Test Case 3: Reconstructions in 2D with the heart and lung phantom; Top row corresponds to the 2D profiles; Bottom row corresponds to a 1D cross sectional profile}
\label{fig:HL}
  \end{figure}

We again notice the presence of significant artifacts in the time-reversal reconstruction, leading to poor contrast and resolution. The CNN is expected to give a poor reconstruction with lot of superficial structures, since it was only trained on disks and Gaussians and at the most two objects, instead of ellipses and combination of three objects. However, some sharp edges in the CNN reconstruction help with the initial guess of the SQH algorithm, which now significantly improves the contrast and resolution significantly. This is also noted in the cross-sectional views of the reconstruction.

We now list the MSE, PSNR, and SSIM values of the reconstructions in Tables \ref{tab:MSE}, \ref{tab:PSNR}, and \ref{tab:SSIM}. 

\begin{table}[H]
\centering
\begin{tabular}{|c|c|c|c|c|c|}
\hline
Phantom & Test Case& TR & CNN &SQH \\ [0.5ex]
\hline
2 phantom &Test case 1  & 5.2e-3 & 4.3e-3 &3.9e-3 \\
3 phantom &Test case 2   & 1.1e-2 &1.5e-2 &8.8e-3  \\
Heart and lung &Test case 3 & 1.1e-2 &1.3e-2 & 8.9e-3   \\
[1ex]
\hline
\end{tabular}
\caption{MSE values for the various test cases.}
\label{tab:MSE}
\end{table}

\begin{table}[H]
\centering
\begin{tabular}{|c|c|c|c|c|c|}
\hline
Phantom & Test Case& TR & CNN &SQH \\ [0.5ex]
\hline
2 phantom &Test case 1  & 22.85 & 23.67 &24.05 \\
3 phantom &Test case 2   & 19.57 &18.11 &20.55 \\
Heart and lung &Test case 3 & 19.57 &8.86 & 20.51   \\
[1ex]
\hline
\end{tabular}
\caption{PSNR values for the various test cases.}
\label{tab:PSNR}
\end{table}

\begin{table}[H]
\centering
\begin{tabular}{|c|c|c|c|c|c|}
\hline
Phantom & Test Case& TR & CNN &SQH \\ [0.5ex]
\hline
2 phantom &Test case 1  & 0.5 & 0.94 &0.92 \\
3 phantom &Test case 2   & 0.32 &0.84 &0.86  \\
Heart and lung &Test case 3 & 0.33 &0.48 & 0.87   \\
[1ex]
\hline
\end{tabular}
\caption{SSIM values for the various test cases.}
\label{tab:SSIM}
\end{table}
We observe that the TR method gives high values of MSE and low values of PSNR. The MSE decreases and PSNR increases with the CNN, more so in the case of the 2 phantom scenario since it was trained on such datasets, but is significantly outperformed by the SQH algorithm. The striking feature is the SSIM values of the SQH algorithm which is close to 1 for all the test cases compared to the other two methods, further demonstrating the robustness and versatility of our proposed reconstruction framework.

For the next set of test cases, we now consider space-time varying functions for the damping and nonlinearity coefficients $d,\eta$. Specifically, we choose $d(x,t) = 0.1\exp(-t) + 0.001\exp(-x)$ and $\eta = 0.001 + 0.001\exp(-t) + 0.001\exp(-x)$. Our CNN is still trained on the original set of coefficients from the previous test cases. We now consider a combination of 3 phantoms: a Gaussian phantom centered at (-0.5,-0.5) and standard deviation 0.5, 2 disk phantoms centered at $(0.2,0.2)$ and $(0.5,-0.4)$ with radius 0.2 and intensity 1. The reconstructions are shown in Figure \ref{fig:3Gnew}.
  \begin{figure}[H]
\centering
\subfloat[True]{\includegraphics[width=0.24\textwidth, height=0.24\textwidth]{figures/3G_true.eps} \label{fig:true3G2}}
\subfloat[Time-reversal]{\includegraphics[width=0.24\textwidth, height=0.24\textwidth]{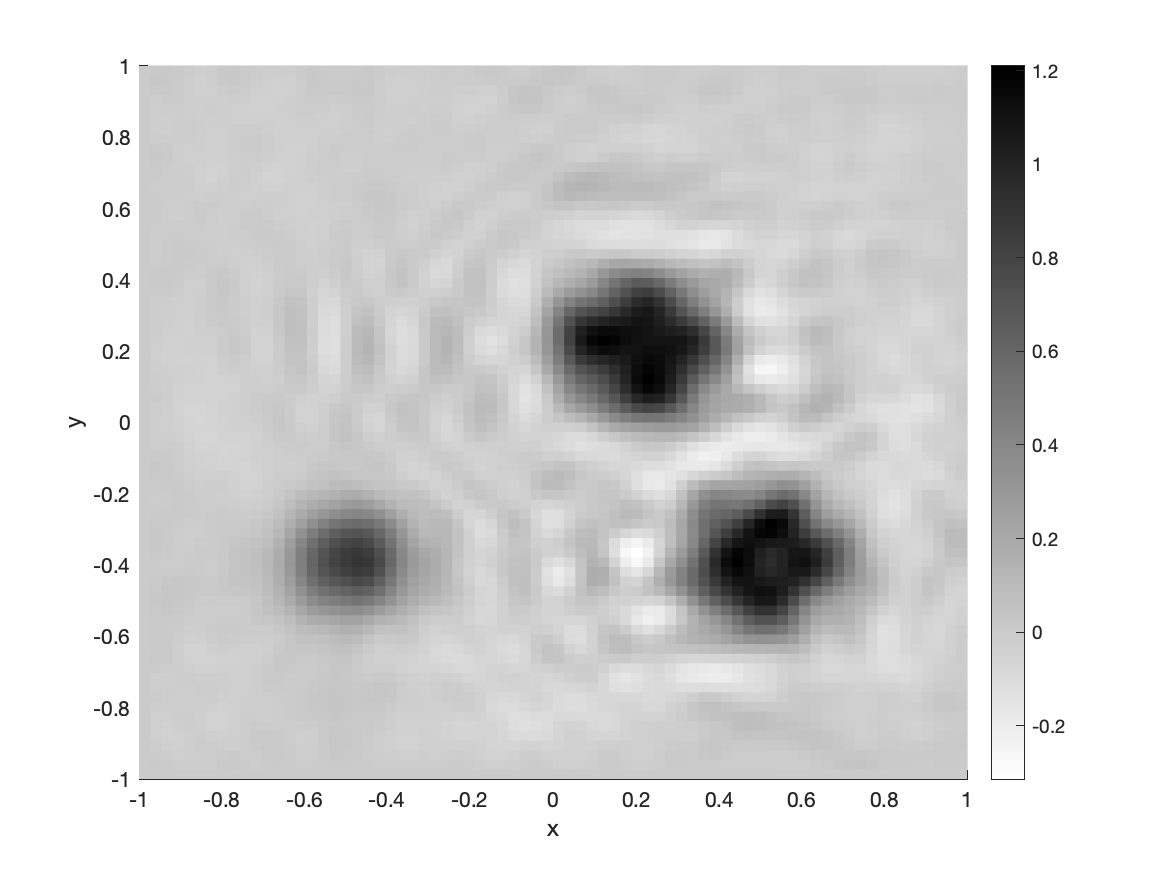}\label{fig:TR3Gn}}
\subfloat[CNN]{\includegraphics[width=0.24\textwidth, height=0.24\textwidth]{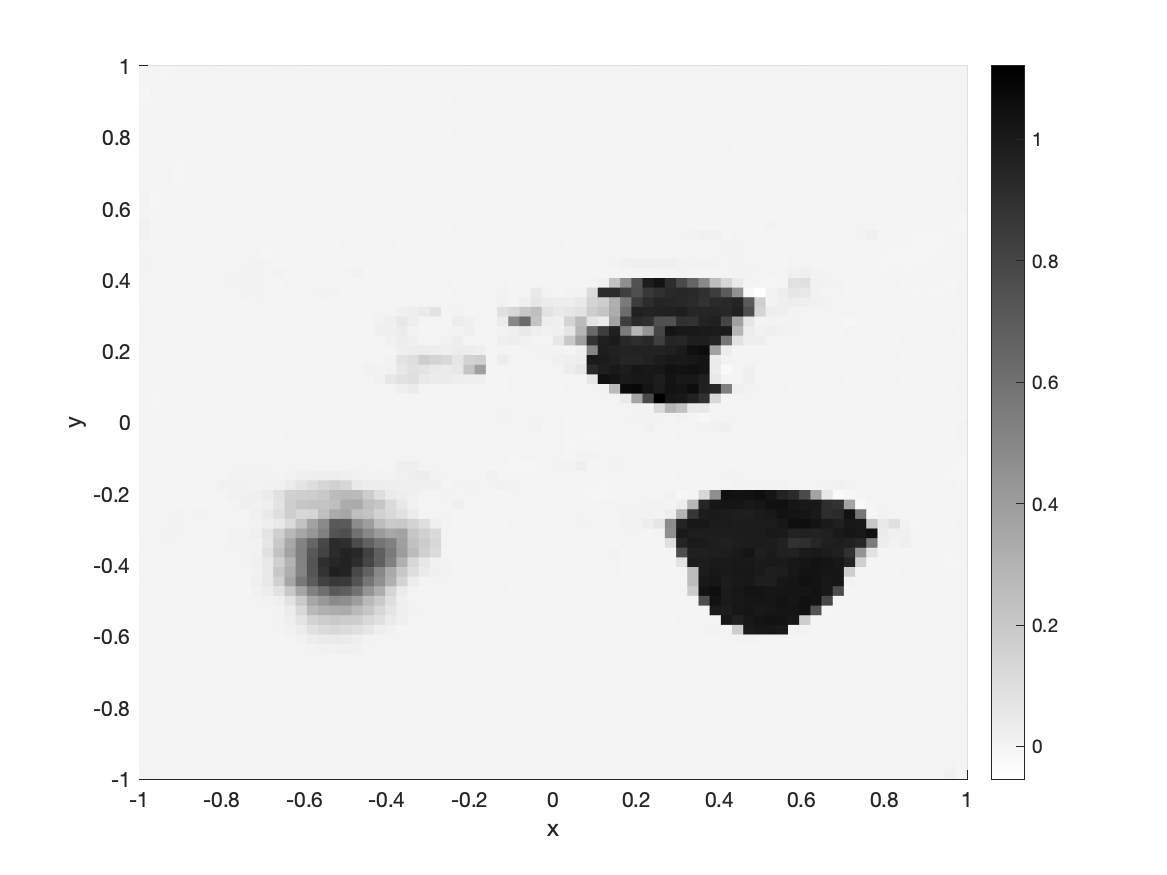}\label{fig:CNN3Gn}}
\subfloat[SQH]{\includegraphics[width=0.24\textwidth, height=0.24\textwidth]{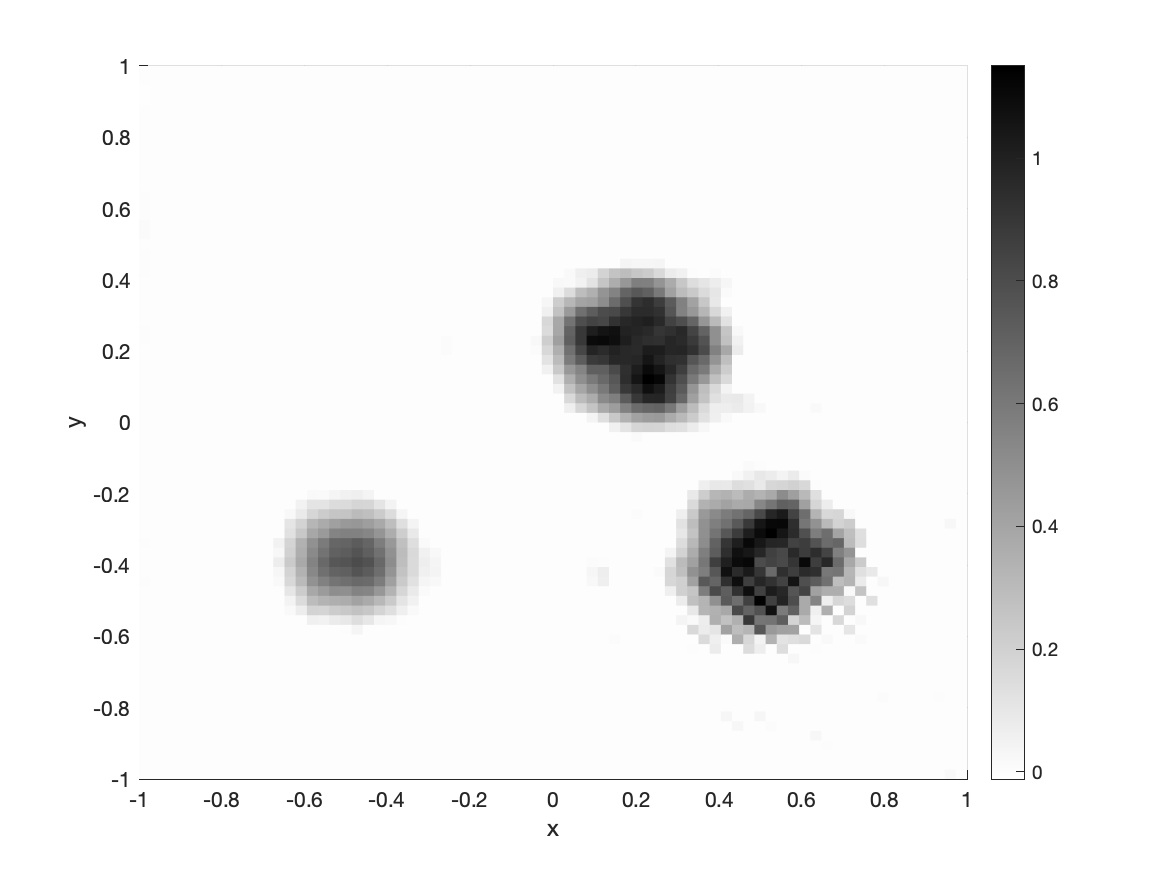}\label{fig:SQH3Gn}}
\caption{Test Case 4: Reconstructions with 3 phantoms with space-time varying coefficients}
\label{fig:3Gnew}
  \end{figure}
We again notice the presence of severe artifacts in the reconstruction with the time-reversal algorithm, leading to poor resolution and contrast. Even an imperfect trained CNN, its reconstruction does capture the disk discontinuity with a better contrast, with other superficial structures present. The SQH algorithm is again able to significantly clear off the artifacts due to the sparsity promoting feature and the contrast and resolution are far more better.

For the final test case, we consider a heart and lung phantom, represented by 2 ellipses and a disk. The reconstructions are shown in Figure \ref{fig:HLnew}.
  \begin{figure}[H]
\centering
\subfloat[True]{\includegraphics[width=0.24\textwidth, height=0.24\textwidth]{figures/HL_true.eps} \label{fig:trueHL2}}
\subfloat[Time-reversal]{\includegraphics[width=0.24\textwidth, height=0.24\textwidth]{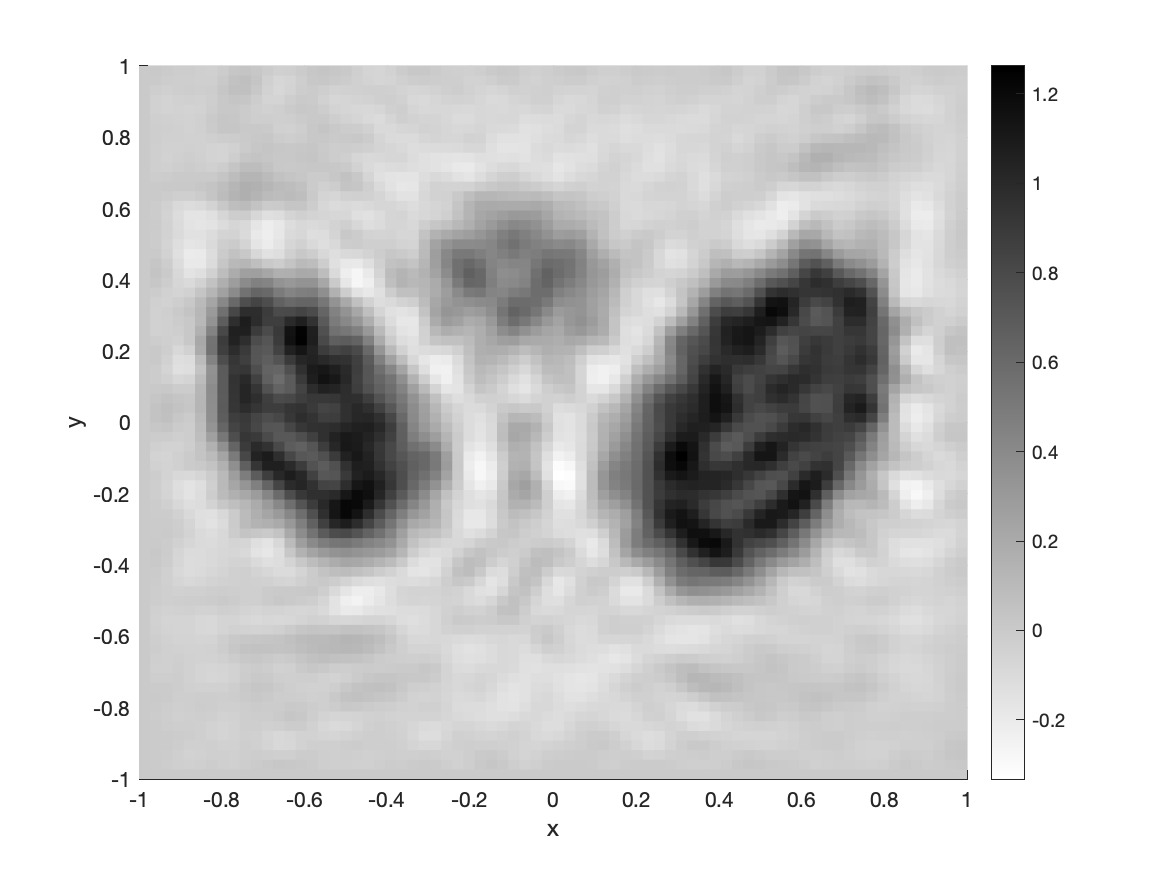}\label{fig:TRHLn}}
\subfloat[CNN]{\includegraphics[width=0.24\textwidth, height=0.24\textwidth]{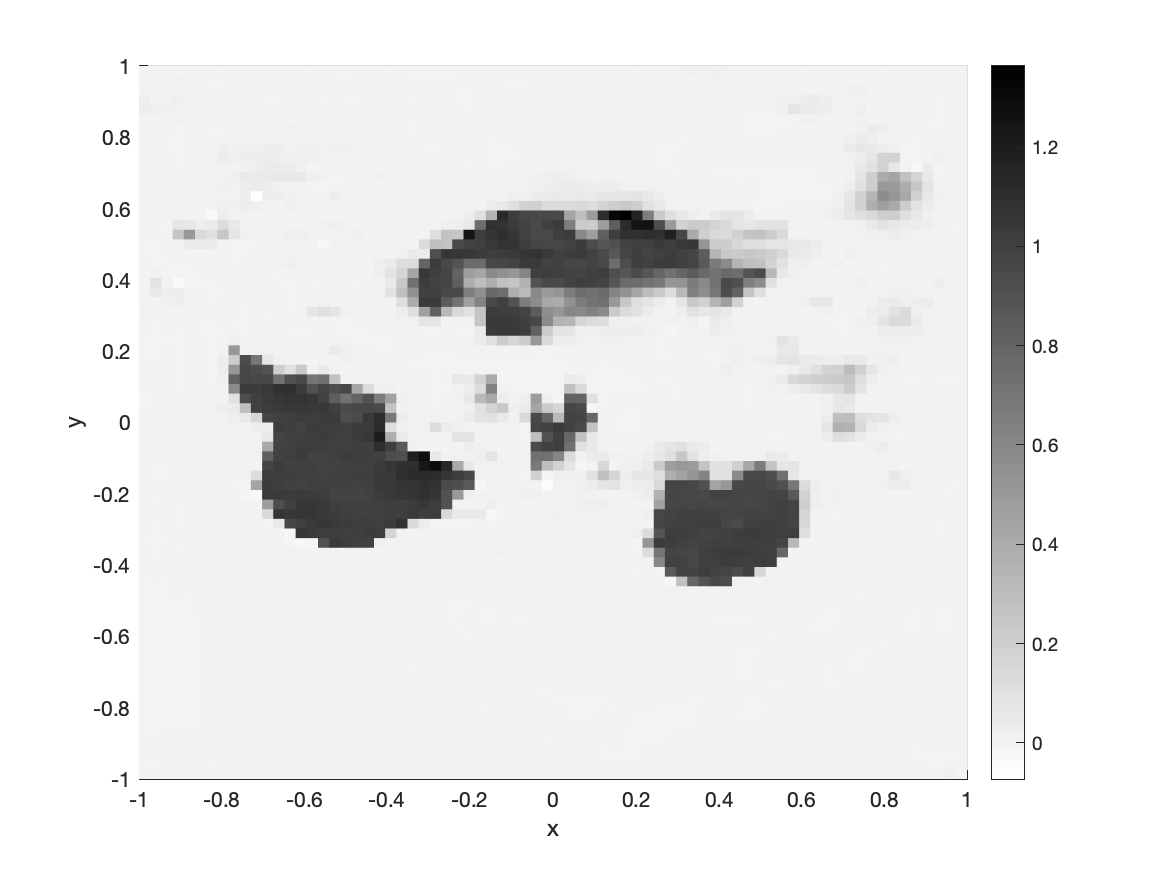}\label{fig:CNNHLn}}
\subfloat[SQH]{\includegraphics[width=0.24\textwidth, height=0.24\textwidth]{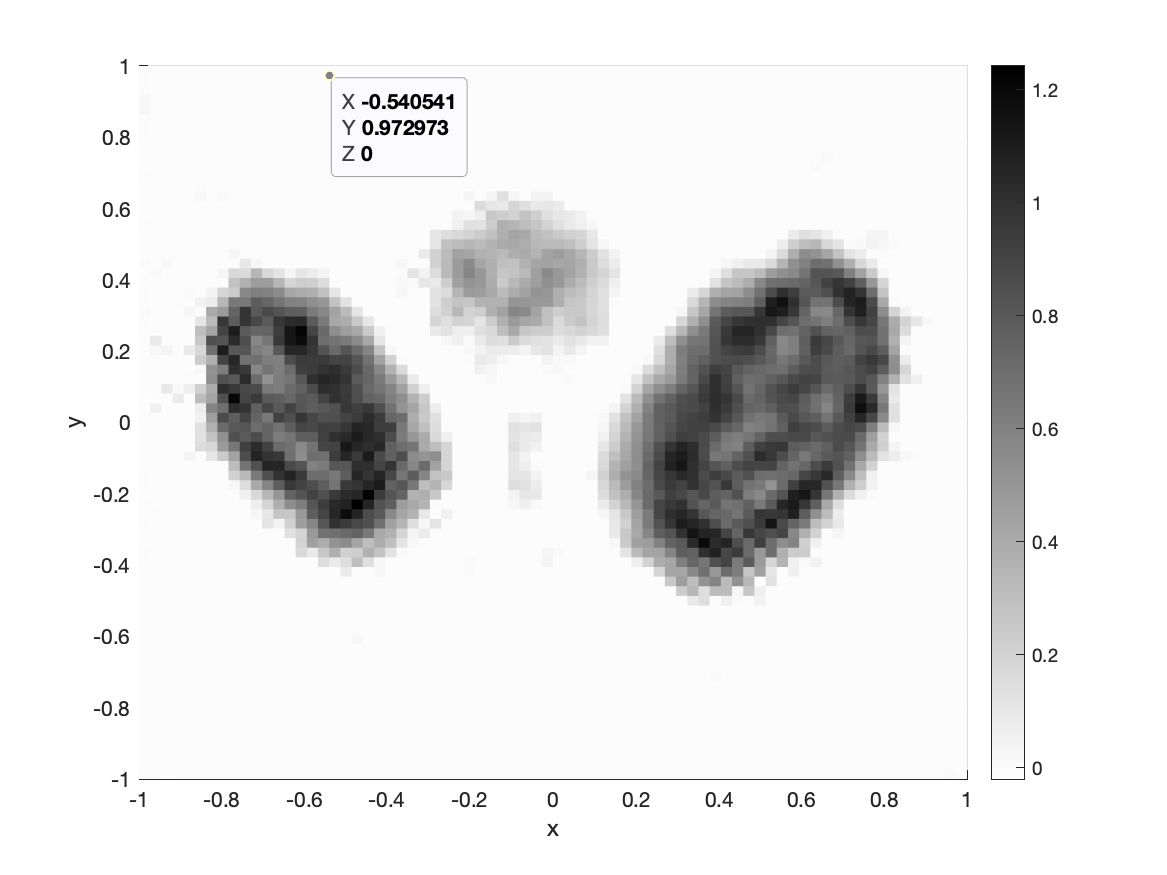}\label{fig:SQHHLn}}
\caption{Test Case 5: Reconstructions in 2D with the heart and lung phantom with space-time varying coefficients}
\label{fig:HLnew}
  \end{figure}
 
We again notice the presence of significant artifacts in the time-reversal reconstruction, leading to poor contrast and resolution. The CNN is expected to give a poor reconstruction with lot of superficial structures, since it was not  only trained on disks and Gaussians and at the most two objects, instead of ellipses and combination of three objects but also with data that was generated with simplified damping and nonlinearity coefficients. However, some sharp edges in the CNN reconstruction help with the initial guess of the SQH algorithm, which now significantly improves the contrast and resolution significantly.

We now list the MSE, PSNR, and SSIM values of the reconstructions with the space-time varying parameters in Tables \ref{tab:MSEnew}, \ref{tab:PSNRnew}, and \ref{tab:SSIMnew}. 

\begin{table}[H]
\centering
\begin{tabular}{|c|c|c|c|c|c|}
\hline
Phantom & Test Case& TR & CNN &SQH \\ [0.5ex]
\hline
3 phantom &Test case 4   & 1.1e-2 &1.5e-2 &1.0e-2  \\
Heart and lung &Test case 5 & 2.5e-2 &1.4e-1 &2.4e-3   \\
[1ex]
\hline
\end{tabular}
\caption{MSE values for the space-time varying coefficients test cases.}
\label{tab:MSEnew}
\end{table}

\begin{table}[H]
\centering
\begin{tabular}{|c|c|c|c|c|c|}
\hline
Phantom & Test Case& TR & CNN &SQH \\ [0.5ex]
\hline
3 phantom &Test case 4   & 19.57 &18.11 &19.86 \\
Heart and lung &Test case 5 & 15.94 &8.54 & 16.12   \\
[1ex]
\hline
\end{tabular}
\caption{PSNR values for the space-time varying coefficients test cases.}
\label{tab:PSNRnew}
\end{table}

\begin{table}[H]
\centering
\begin{tabular}{|c|c|c|c|c|c|}
\hline
Phantom & Test Case& TR & CNN &SQH \\ [0.5ex]
\hline
3 phantom &Test case 4   & 0.32 &0.84 &0.86  \\
Heart and lung &Test case 5 & 0.19 &0.56 & 0.72  \\
[1ex]
\hline
\end{tabular}
\caption{SSIM values for the space-time varying coefficients test cases.}
\label{tab:SSIMnew}
\end{table}
We again observe that the TR method gives high values of MSE and low values of PSNR. The MSE decreases and PSNR increases with the CNN, more so in the case of the 2 phantom scenario since it was trained on such datasets, but is significantly outperformed by the SQH algorithm. This demonstrates the robustness and versatility of our proposed reconstruction framework.

What we observe from these experiments is that a significantly well trained CNN combined with the time-reversal method can be very effective to generate initial guesses for iterative reconstruction algorithms arising in nonlinear PAT mechanisms. This framework can also be further extended to other nonlinear reconstruction frameworks which are governed by PDE dynamics and where traditional methods are either not applicable or yield significantly poor reconstruction results. 

\section{Conclusion}\label{sec: Conclusion}

In this work, we investigated the inverse problem of reconstructing the initial pressure distribution in photoacoustic tomography governed by a nonlinear damped viscoelastic wave equation. Unlike standard acoustic models, the proposed framework incorporates nonlinear propagation effects, viscous damping, temporal attenuation, and spatially varying acoustic parameters, leading to a more realistic mathematical description of wave propagation in biological media. From the theoretical perspective, we established the well-posedness of the nonlinear forward problem using Galerkin approximations, energy estimates, and a fixed-point argument. For the inverse problem, we derived existence, uniqueness, and local uniqueness results under suitable assumptions through harmonic extension methods, spectral Laplace transform techniques, and observability estimates. 

From the computational perspective, we proposed a hybrid reconstruction framework combining a convolutional neural network with a gradient-free optimization strategy based on the sequential quadratic Hamiltonian (SQH) method derived from Pontryagin’s maximum principle. The CNN provides an informative data-driven initialization, while the SQH algorithm enforces the underlying PDE dynamics and improves physical consistency of the reconstruction. Numerical experiments in two spatial dimensions demonstrated that the proposed hybrid framework produces improved contrast, resolution, and reconstruction accuracy compared to standalone time-reversal and CNN-based reconstructions, particularly in the presence of attenuation and nonlinear effects.

The results indicate that combining machine learning with structure-preserving optimization techniques provides an effective framework for solving nonlinear and ill-posed inverse problems arising in PAT. Future work will focus on extending the framework to developing more advanced physics-informed and uncertainty-aware reconstruction strategies for complex heterogeneous media.

\section*{Acknowledgment}
The work of S. Roy was supported by the US National Science Foundation Grant No. DMS-2309491.

\bibliographystyle{unsrt}
\bibliography{Ref}

@article{conti2014well,
author = {Conti, Monica and Marchini, Elsa M. and Pata, Vittorino},
title = {A well-posedness result for nonlinear viscoelastic equations with memory},
journal = {Nonlinear Analysis: Theory, Methods and Applications},
volume = {94},
pages = {206--216},
year = {2014}
}

@article{landes1980galerkin,
  author = {Landes, R.},
  title = {On Galerkin's Method in the Existence Theory of Quasilinear Elliptic Equations},
  journal = {Analysis},
  volume = {39},
  pages = {123--148},
  year = {1980},
}

@article{ferreira2023global,
author = {Ferreira, Jorge and Shahrouzi, Mohammad and Aitzhanov, Serik E. and Rocha, Fabio},
title = {Global existence, uniqueness and asymptotic behavior for a nonlinear viscoelastic problem with internal damping and logarithmic source term},
journal = {Differential Equations \& Applications},
volume = {15},
number = {4},
pages = {395--429},
year = {2023},
doi = {10.7153/dea-2023-15-20},
}

@article{Roy2023,
  title={Radon transform with Gaussian beam: Theoretical and numerical reconstruction scheme},
  author={Roy, S. and Jeon, G. and Moon, S.},
  journal={Applied Mathematics and Computation},
  volume={452},
  pages={128024},
  year={2023},
  publisher={Elsevier}
}

@article{Roy2022,
  title={A New Nonlinear Sparse Optimization Framework in Ultrasound-Modulated Optical Tomography},
  author={Roy, S.},
  journal={IEEE Transactions on Computational Imaging},
  volume={8},
  pages={1--11},
  year={2022},
  publisher={IEEE}
}

@article{Roy2025,
  title={A {PINN}-driven game-theoretic framework in limited data photoacoustic tomography},
  author={Roy, S. and Pal, S.},
  journal={Inverse Problems},
  year={2025}
}

@article{breit19a,
  title={A sequential quadratic Hamiltonian scheme for solving non-smooth quantum control problems with sparsity},
  author={Breitenbach, T. and Borz{\`\i}, A.},
  journal={Journal of Computational and Applied Mathematics},
  volume={369},
  pages={112583},
  year={2020},
  publisher={Elsevier}
}

@article{breit19,
  title={On the {SQH} scheme to solve nonsmooth PDE optimal control problems},
  author={Breitenbach, T. and Borz{\`\i}, A.},
  journal={Numerical Functional Analysis and Optimization},
  volume={40},
  number={13},
  pages={1489--1531},
  year={2019},
  publisher={Taylor \& Francis}
}

@article{breit18,
  title={A sequential quadratic Hamiltonian method for solving parabolic optimal control problems with discontinuous cost functionals},
  author={Breitenbach, T. and Borzi, A.},
  journal={Journal of Dynamical and Control Systems},
  volume={25},
  number={3},
  pages={403--435},
  year={2019},
  publisher={Springer}
}

@book{Borzi2023,
  title={The Sequential Quadratic Hamiltonian Method: Solving Optimal Control Problems},
  author={Borz{\`\i}, A.},
  series={Chapman and Hall/CRC Numerical Analysis and Scientific Computing Series},
  year={2023},
  publisher={Taylor \& Francis}
}

@article{moonip24,
doi = {10.1088/1361-6420/ad42cd},
url = {https://dx.doi.org/10.1088/1361-6420/ad42cd},
year = {2024},
month = {may},
publisher = {IOP Publishing},
volume = {40},
number = {6},
pages = {065009},
author = {Kim, S. and Moon, S. and Seo, I.},
title = {Reconstruction of the initial data from the trace of the solutions on an infinite time cylinder of damped wave equations},
journal = {Inverse Problems}
}

@article{moonzangerlip19,
	doi = {10.1088/1361-6420/ab2a30},
	year = 2019,
	month = {oct},
	publisher = {{IOP} Publishing},
	volume = {35},
	number = {11},
	pages = {114005},
	author = {G. Zangerl and S. Moon and M. Haltmeier},
	title = {Photoacoustic tomography with direction dependent data: an exact series reconstruction approach},
	journal = {Inverse Problems}
}

@article{anastasiozmr07,
  author={M. A Anastasio and J. Zhang and D. Modgil and P. J La Rivi\`{e}re},
  title={Application of inverse source concepts to photoacoustic tomography},
  journal={Inverse Problems},
  volume={23},
  number={6},
  pages={S21},
  url={http://stacks.iop.org/0266-5611/23/i=6/a=S03},
  year={2007}
}

@article{ammaribjk10,
author = {H. Ammari and E. Bossy and V. Jugnon and H. Kang},
title = {Mathematical Modeling in Photoacoustic Imaging of Small Absorbers},
journal = {SIAM Review},
volume = {52},
number = {4},
pages = {677-695},
year = {2010},
doi = {10.1137/090748494},
URL = {         http://dx.doi.org/10.1137/090748494},
eprint = {         http://dx.doi.org/10.1137/090748494}
}

@article{Gao12,
author = {Gao, H. and Osher, S. and Zhao, H.},
journal = {Mathematical Modeling in Biomedical Imaging II, Lecture Notes in Mathematics 2035, Springer, Heidelberg},
pages = {131--158},
title = {{Quantitative Photoacoustic Tomography}},
year = {2012}
}

@misc{dohknn24,
      title={Full Field Inversion of the Attenuated Wave Equation: Theory and Numerical Inversion}, 
      author={N. Do and M. Haltmeier and R. Kowar and L. V. Nguyen and R. Nuster},
      year={2024},
      eprint={2405.18082},
      archivePrefix={arXiv},
      primaryClass={math.NA},
      url={https://arxiv.org/abs/2405.18082}, 
}

@book{scherzer15,
  title      = {Handbook of Mathematical Methods in Imaging},
  address    = {New York},
  doi        = {10.1007/978-1-4939-0790-8},
  edition    = {2},
  editor     = {Scherzer, O.},
  isbn       = {978-1-4939-0789-2},
  publisher  = {Springer},
  year       = {2015},
}

@article{Dey2024,
  title={A high contrast and resolution reconstruction algorithm in quantitative photoacoustic tomography},
  author={Dey, A. and Borz{\`\i}, A. and Roy, S.},
  journal={Journal of Computational and Applied Mathematics},
  pages={116065},
  year={2024},
  publisher={Elsevier}
}

@article{bell80,
  title={On the Production and Reproduction of Sound by Light},
  author={Bell, A.G.},
  url={http://books.google.com/books?id=iq9TGwAACAAJ},
  year={1880},
  pages="305-324",
  month={October},
  Journal="American Journal of Science",
  volume="20",
}

@article {elbauss17,
    AUTHOR = {P. Elbau and O. Scherzer and C. Shi},
     TITLE = {Singular values of the attenuated photoacoustic imaging
              operator},
   JOURNAL = {J. Differential Equations},
  FJOURNAL = {Journal of Differential Equations},
    VOLUME = {263},
      YEAR = {2017},
    NUMBER = {9},
     PAGES = {5330--5376},
      ISSN = {0022-0396,1090-2732},
   MRCLASS = {35L15 (35A17 35B30 35R25 35S05 94A08)},
  MRNUMBER = {3688416},
MRREVIEWER = {Michael\ Ruzhansky},
       DOI = {10.1016/j.jde.2017.06.018},
       URL = {https://doi.org/10.1016/j.jde.2017.06.018},
}

@article{haltmeiern19,
  title={Reconstruction algorithms for photoacoustic tomography in heterogeneous damping media},
  author={Haltmeier, M. and Nguyen, L. V},
  journal={Journal of Mathematical Imaging and Vision},
  volume={61},
  number={7},
  pages={1007--1021},
  year={2019},
  publisher={Springer}
}

@article{homan13,
title = {Multi-wave imaging in attenuating media},
journal = {Inverse Problems and Imaging},
volume = {7},
number = {4},
pages = {1235-1250},
year = {2013},
issn = {1930-8337},
doi = {10.3934/ipi.2013.7.1235},
url = {https://www.aimsciences.org/article/id/bab0adb9-ba16-4214-86da-5e9108cb0102},
author = {A. Homan}
}

@article{nachmansw90,
  title={An equation for acoustic propagation in inhomogeneous media with relaxation losses},
  author={A. I. Nachman and J.F. Smith III and R. C. Waag},
  journal={The Journal of the Acoustical Society of America},
  volume={88},
  number={3},
  pages={1584--1595},
  year={1990},
  publisher={Acoustical Society of America}
}

@incollection {kowars12,
    AUTHOR = {R. Kowar and O. Scherzer},
     TITLE = {Attenuation models in photoacoustics},
 BOOKTITLE = {Mathematical modeling in biomedical imaging. {II}},
    SERIES = {Lecture Notes in Math.},
    VOLUME = {2035},
     PAGES = {85--130},
 PUBLISHER = {Springer, Heidelberg},
      YEAR = {2012},
      ISBN = {978-3-642-22989-3; 978-3-642-22990-9},
   MRCLASS = {92C55},
  MRNUMBER = {3024671},
       DOI = {10.1007/978-3-642-22990-9\_4},
       URL = {https://doi.org/10.1007/978-3-642-22990-9_4},
}

@article{moonjop16,
   author = {{Haltmeier}, M. and {Berer}, T. and {Moon}, S. and {Burgholzer}, P.},
  title={Compressed sensing and sparsity in photoacoustic tomography},
  journal={Journal of Optics},
  volume={18},
  number={11},
  pages={114004},
  url={http://stacks.iop.org/2040-8986/18/i=11/a=114004},
  year={2016}
}

@book{kuchment14book,
  title={The Radon Transform and Medical Imaging},
  author={Kuchment, P.},
  isbn={9781611973280},
  lccn={2013038338},
  series={CBMS-NSF Regional Conference Series in Applied Mathematics},
  url={http://books.google.co.kr/books?id=AbByAgAAQBAJ},
  year={2014},
  publisher={Society for Industrial and Applied Mathematics}
}

@article{palacios16,
doi = {10.1088/0266-5611/32/12/125008},
url = {https://doi.org/10.1088/0266-5611/32/12/125008},
year = {2016},
month = {nov},
publisher = {IOP Publishing},
volume = {32},
number = {12},
pages = {125008},
author = {Palacios, B.},
title = {Reconstruction for multi-wave imaging in attenuating media with large damping coefficient},
journal = {Inverse Problems}
}

@article{liuu15,
doi = {10.1088/0266-5611/31/10/105005},
url = {https://dx.doi.org/10.1088/0266-5611/31/10/105005},
year = {2015},
month = {sep},
publisher = {IOP Publishing},
volume = {31},
number = {10},
pages = {105005},
author = {Liu, H. and Uhlmann, G.},
title = {Determining both sound speed and internal source in thermo- and photo-acoustic tomography},
journal = {Inverse Problems}
}

@article{knoxm20,
doi = {10.1088/1361-6420/ab53fc},
url = {https://dx.doi.org/10.1088/1361-6420/ab53fc},
year = {2020},
month = {jan},
publisher = {IOP Publishing},
volume = {36},
number = {2},
pages = {025002},
author = {Knox, C. and Moradifam, A.},
title = {Determining both the source of a wave and its speed in a medium from boundary measurements},
journal = {Inverse Problems}
}

@article{moonsiims23,
  title={Singular value decomposition of the wave forward operator with radial variable coefficients},
  author={Moon, M. and Hur, I. and Moon, S.},
  journal={SIAM Journal on Imaging Sciences},
  volume={16},
  number={3},
  pages={1520--1534},
  year={2023},
  publisher={SIAM}
}

@article{nguyenhkd22,
author = {Nguyen, L. and Haltmeier, M. and Kowar, R. and Do, N.},
title = {Analysis for Full-Field Photoacoustic Tomography with Variable Sound Speed},
journal = {SIAM Journal on Imaging Sciences},
volume = {15},
number = {3},
pages = {1213-1228},
year = {2022},
doi = {10.1137/21M1463409},

URL = { 
    
        https://doi.org/10.1137/21M1463409
    
    

},
eprint = { 
    
        https://doi.org/10.1137/21M1463409
    
    

}
}

@article{hwangjmp24,
  title={Implicit learning to determine variable sound speed and the reconstruction operator in photoacoustic tomography},
  author={Hwang, G. and Jeon, G. and Moon, S. and Park, D.},
  journal={arXiv preprint arXiv:2407.09749},
  year={2024}
}

@article{hristovakn08,
doi = {10.1088/0266-5611/24/5/055006},
url = {https://doi.org/10.1088/0266-5611/24/5/055006},
year = {2008},
month = {aug},
publisher = {},
volume = {24},
number = {5},
pages = {055006},
author = {Hristova, Y. and Kuchment, P. and Nguyen, L.},
title = {Reconstruction and time reversal in thermoacoustic tomography in acoustically homogeneous and inhomogeneous media},
journal = {Inverse Problems}
}

@article{oksanenu14,
  title={Photoacoustic and thermoacoustic tomography with an uncertain wave speed},
  author={Oksanen, L. and Uhlmann, G.},
  journal={Mathematical Research Letters},
  volume={21},
  number={5},
  pages={1199--1214},
  year={2014},
  publisher={International Press}
}

@article{stefanovu09,
  author={P. Stefanov and G. Uhlmann},
  title={Thermoacoustic tomography with variable sound speed},
  journal={Inverse Problems},
  volume={25},
  number={7},
  pages={075011},
  url={http://stacks.iop.org/0266-5611/25/i=7/a=075011},
  year={2009}
}

@book{wang09,
  title={Photoacoustic Imaging and Spectroscopy},
  author={Wang, L.V.},
  isbn={9781420059922},
  lccn={2009002051},
  series={Optical Science and Engineering},
  url={http://books.google.co.kr/books?id=QzdHEmUFcXgC},
  year={2009},
  publisher={Taylor \& Francis}
}

@book{Enflo2002,
  title={Theory of nonlinear acoustics in fluids},
  author={Enflo, Bengt O and Hedberg, Claes M},
  volume={67},
  year={2002},
  publisher={Springer Science \& Business Media}
}

@article{Lions1969,
  title={Quelque methodes de r{\'e}solution des problemes aux limites non lin{\'e}aires, Paris, Dunod-Gauth},
  author={Lions, JL},
  journal={Vill. MR},
  volume={259693},
  year={1969}
}

@article{Claret2024,
  title={An observability estimate for the wave equation and applications to the Neumann boundary controllability for semi-linear wave equations},
  author={Claret, Sue},
  journal={arXiv preprint arXiv:2409.07214},
  year={2024}
}

@incollection{Ammari2012,
  title={Photoacoustic imaging for attenuating acoustic media},
  author={Ammari, Habib and Bretin, Elie and Jugnon, Vincent and Wahab, Abdul},
  booktitle={Mathematical modeling in biomedical imaging II: optical, ultrasound, and opto-acoustic tomographies},
  pages={57--84},
  year={2012},
  publisher={Springer}
}

@article{gupta2020sparse,
  title   = {Sparse reconstruction of log-conductivity in current density impedance tomography},
  author  = {Gupta, M. and Mishra, R. K. and Roy, S.},
  journal = {Journal of Mathematical Imaging and Vision},
  volume  = {62},
  number  = {2},
  pages   = {189--205},
  year    = {2020},
  publisher = {Springer US},
  doi     = {10.1007/s10851-019-00939-7}
}

@article{gupta2021sparsity,
  title   = {Sparsity-based nonlinear reconstruction of optical parameters in two-photon photoacoustic computed tomography},
  author  = {Gupta, M. and Mishra, R. K. and Roy, S.},
  journal = {Inverse Problems},
  volume  = {37},
  number  = {4},
  pages   = {044001},
  year    = {2021},
  publisher = {IOP Publishing},
  doi     = {10.1088/1361-6420/abe46f}
}

@article{adesokan2018nonlinear,
  title   = {A fully non-linear optimization approach to acousto-electric tomography},
  author  = {Adesokan, B. J. and Knudsen, K. and Krishnan, V. P. and Roy, S.},
  journal = {Inverse Problems},
  volume  = {34},
  number  = {10},
  pages   = {104004},
  year    = {2018},
  publisher = {IOP Publishing},
  doi     = {10.1088/1361-6420/aadf2f}
}

@article{roy2018optimization,
  title   = {A new optimization approach to sparse reconstruction of log-conductivity in acousto-electric tomography},
  author  = {Roy, S. and Borz{\`\i}, A.},
  journal = {SIAM Journal on Imaging Sciences},
  volume  = {11},
  number  = {2},
  pages   = {1759--1784},
  year    = {2018},
  publisher = {Society for Industrial and Applied Mathematics},
  doi     = {10.1137/17M1133954}
}

@article{duck2002nonlinear,
  title   = {Nonlinear acoustics in diagnostic ultrasound},
  author  = {Duck, Francis A.},
  journal = {Ultrasonics},
  volume  = {40},
  number  = {1--8},
  pages   = {9--16},
  year    = {2002},
  publisher = {Elsevier},
  doi     = {10.1016/S0041-624X(02)00174-5}
}

\end{document}